\DeclareMathOperator{\XGCD}{XGCD}
\def\tr{\mathrm{tr}}
\def\C{\mathscr{C}}
\def\S{\mathscr{S}}
\def\B{\mathscr{B}}
\def\Cl{\mathcal{C}}
\def\Nb{\mathcal{N}}
\def\Jor{\mathrm{Jor}}
\def\diag{\mathrm{diag}}
\def\GL{\mathrm{GL}}    
\def\Mat{\mathrm{Mat}}   
\def\I{\mathrm{I}}
\def\id{\mathrm{id}}
\def\Ker{\mathrm{Ker}}
\def\im{\mathrm{Im}}
\def\PSU{\mathrm{PSU}}
\def\PSL{\mathrm{PSL}}
\def\PSp{\mathrm{PSp}}
\def\SL{\mathrm{SL}}  
\def\SU{\mathrm{SU}}
\def\Sp{\mathrm{Sp}}
\def\Sym{\mathrm{Sym}}
\def\U{\mathrm{U}}
\def\P{{\rm P}}
\def\equad{\quad \textrm{ and } \quad}
\newtheorem{theorem}{Theorem}[section] 
\newtheorem{lemma}[theorem]{Lemma}     
\newtheorem{proposition}[theorem]{Proposition}
\theoremstyle{definition}
\newcommand{\Z}{\mathbb{Z}}   
\newcommand{\Q}{\mathbb{Q}}    
\newcommand{\F}{\mathbb{F}}    
\newcommand{\zeri}[1]{\mathbf{0}_{#1}}
\numberwithin{equation}{section}
\begin{document}

\title{The $(2,3)$-generation of the finite unitary groups}

\author{M.A. Pellegrini}
\email{marcoantonio.pellegrini@unicatt.it}

\author{M.C. Tamburini Bellani}
\email{mariaclara.tamburini@gmail.com}

\address{Dipartimento di Matematica e Fisica, Universit\`a Cattolica del Sacro Cuore,
Via Musei 41, 25121 Brescia, Italy}

\begin{abstract}
In this paper we prove that the unitary groups $\SU_n(q^2)$ are $(2,3)$-generated
for any prime power $q$
and any integer $n\geq 8$.
By previous results this implies that, if $n\geq 3$, the groups $\SU_n(q^2)$ and  $\PSU_n(q^2)$ are 
$(2,3)$-generated,
except when $(n,q)\in\{(3,2),(3,3),(3,5),(4,2), (4,3),(5,2)\}$.
\end{abstract}

\keywords{Unitary group; simple group; generation}
\subjclass[2010]{20G40, 20F05}

\maketitle

\section{Introduction}

A group  is said to be $(2,3)$-generated if it can be generated by an element of order $2$ and an element of order
$3$. It is well known that such groups are epimorphic  images of the infinite unimodular group $\PSL_2(\Z)$.
By a famous result of Liebeck and Shalev \cite{LS}, the finite classical simple groups are
$(2,3)$-generated, apart from the two infinite families $\PSp_4(q)$ with $q=2^f,3^f$ and a finite list 
$\mathcal{L}$ of exceptions.
This list $\mathcal{L}$ includes the ten groups $\PSL_2(9), \PSL_3(4), \PSU_3(9), \PSU_3(25), \PSL_4(2), \PSU_4(4), 
\PSU_4(9),$ $\PSU_5(4)$ (see \cite{Ischia} and the references therein), $\P\Omega_8^+(2), \P\Omega_8^ +(3)$ (see 
\cite{Max}). However, the problems of determining whether other 
exceptions exist and  finding $(2,3)$-generating pairs for the positive cases are still open (see \cite[Problem 
18.98]{KM}).

It is known that $\mathcal{L}$ contains no other linear group $\PSL_n(q)$ and no other classical simple group of 
dimension less than $8$ (see \cite{SL12,Ischia}). 
Here, in particular, we show that $\mathcal{L}$ contains no other unitary group 
$\PSU_n(q^2)$.
More precisely, we give a constructive proof of  the following result.

\begin{theorem}\label{main}
The groups $\SU_n(q^2)$ are $(2,3)$-generated for any prime power $q$
and any integer $n\geq 3$, except when $(n,q)\in\{(3,2),(3,3),(3,5),(4,2),(4,3),(5,2)\}$.
\end{theorem}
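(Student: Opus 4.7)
The groups $\SU_n(q^2)$ with $3 \leq n \leq 7$, outside the six listed sporadic pairs, are already known to be $(2,3)$-generated by the references cited just before the theorem (the papers on low-dimensional linear groups and on the small unitaries). My plan is therefore to focus on the new case $n \geq 8$ and to produce, for every such $n$ and every prime power $q$, an explicit pair $x, y \in \SU_n(q^2)$ with $x^2 = y^3 = \I_n$ and $\det x = \det y = 1$, and then to prove that $H := \langle x, y \rangle$ coincides with $\SU_n(q^2)$. The statement for $\PSU_n(q^2)$ follows at once by passing to the quotient by scalars.

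For the construction I would fix a Hermitian Gram matrix $J$ of antidiagonal shape, so that the sesquilinear conditions $x^\top J \bar x = J$ and $y^\top J \bar y = J$ reduce to transparent entrywise identities. A natural template for $y$ is block-diagonal, built from $3 \times 3$ companion-type blocks for $t^2 + t + 1$ (corrected by a fixed vector or a $2 \times 2$ block according to $n \bmod 3$), while $x$ is an antidiagonal-looking involution whose few free parameters are tuned so as to force $\det x = 1$ and so that some short word $w(x,y)$, such as $xy$ or $[y, xyx]$, has an explicitly computable and exploitable spectrum. I expect the construction to split into a small number of cases according to $n \bmod 6$, with separate ad hoc variants when the characteristic of $\F_{q^2}$ is $2$ or $3$ and, probably, direct verification of the smallest anchors $n \in \{8,9,10,11\}$ before the uniform families take over.

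With $x, y$ in hand, the bulk of the proof is the identification $H = \SU_n(q^2)$. The strategy I would follow is the standard two-step argument. First, show that $H$ acts absolutely irreducibly on $V = \F_{q^2}^n$, typically by exhibiting an element of $H$ whose characteristic polynomial contains an irreducible factor of large degree, for instance one divisible by a primitive prime divisor of $q^{2n}-1$ or $q^n+1$. Second, locate inside $H$ a distinguished element such as a transvection, a long-root element, or a natural $\SU_k(q^2)$ on a non-degenerate subspace, and invoke a classical recognition theorem for subgroups of classical groups generated by such elements. The main obstacle, as is usual in this line of work, is that the single pair $(x,y)$ has to support both steps uniformly in $(n,q)$: one must find a word $w(x,y)$ whose spectrum simultaneously forces absolute irreducibility and produces the required root element (or isolates a smaller classical subgroup), while also ruling out the imprimitive, field-extension and tensor-product Aschbacher classes together with the finite list of almost-simple candidates in class $\mathcal{S}$. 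Organising the case analysis so that these verifications are short and uniform is, in my view, the delicate heart of the argument.
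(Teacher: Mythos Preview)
Your proposal is an honest research outline rather than a proof: no generators are actually written down, no spectrum is computed, and no maximal subgroup is actually excluded. That is fine as a plan, but it means the assessment must focus on whether your intended strategy matches, or could replace, what the paper actually does.

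The overall architecture you describe---cite the known results for $n\le 7$, build near-permutation generators depending on a parameter, handle $n\bmod 3$ and small characteristics separately, treat a few anchor dimensions ad hoc---is exactly the paper's shape. Your idea of exhibiting a natural $\SU_k(q^2)$ on a nondegenerate subspace and then propagating is also present: this is Theorem~\ref{Sb}, and it is precisely how the paper disposes of the small values $q\in\{2,3,5,7,8,11\}$ (Proposition~\ref{23578}).

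Where you diverge is in the main identification step for generic $q$. You propose to force absolute irreducibility via a primitive-prime-divisor element and then locate a transvection or long-root element to feed into a classical recognition theorem. The paper does something different and, for these particular generators, more robust: it arranges the parameter $a$ so that $C=[x,y]^{24}$ is a \emph{bireflection} (fixed space of codimension~$2$), proves irreducibility, primitivity and tensor-indecomposability directly from the eigenvectors of $[x,y]$, and then invokes the Guralnick--Saxl theorem (Theorem~\ref{GS8}) to conclude that $H$ is classical in its natural representation, after which only $\mathcal{C}_5$ and a couple of $\mathcal{S}$-class candidates remain. The bireflection is the organising device that makes the case analysis short and uniform; your sketch does not contain an analogue of it, and getting a genuine root element out of such generators uniformly in $(n,q)$ is exactly the ``delicate heart'' you allude to but do not resolve.

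Two smaller mismatches: the paper's Hermitian form is $\mathrm{diag}(c\I_{n-2},J_2)$, not antidiagonal, because $y$ is the permutation-like factor and $x$ carries the nontrivial $2\times2$ tail; and for $n=11$ the paper abandons the uniform family entirely and uses different generators (Section~\ref{11}), so your expectation that $8\le n\le 11$ are mere anchors for the same construction is not quite how it plays out.
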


The groups $\SU_{n}(q^2)$ with $n\leq 7$ have been studied in \cite{SU7,SU6,SU35,SU4}: so here we 
assume $n\geq 8$. Actually, it was proved by Gavioli, Tamburini and Wilson  that  
$\SU_{2n}(q^2)$ is $(2,3)$-generated for all $q$ and all $n\geq 37$ and that
$\SU_{2n+1}(q^2)$ is $(2,3)$-generated for all odd $q$ and all $n\geq 49$ \cite{TW,TWG}. 
Their bounds on the rank were slightly improved in \cite{ST}. 
We point out that the problem of the $(2,3)$-generation of the following groups is still open:
\begin{itemize}
\item $\PSp_{2n}(q)$, where $4\leq n\leq 24$ and $q$  is odd;
\item $\Omega_{2n+1}(q)$ and $\P\Omega_{2n}^+(q)$, where  $4\leq n\leq 40$, any $q$;
\item $\P\Omega_{2n}^-(q)$, where  $4\leq n\leq 43$ if $q$ is odd and  $n\geq 4$ if $q$ is even.
\end{itemize}

Following a successful idea introduced in \cite{T87}, 
our $(2,3)$-generators $x,y$ are very close to being permutations and tend to be uniform with respect to $n$ and $q$.
They are described in Section \ref{gene}, where we also give some preliminary results.
The proof of Theorem \ref{main} uses the knowledge of the maximal subgroups of a classical group. 
An important step is proving the existence of a bireflection, that is, of an element with a fixed points space of 
dimension $n-2$. This allows us to use the following result due to Guralnick and Saxl.

\begin{theorem}\cite[Theorem 7.1]{GS} \label{GS8} 
Let $V$ be a finite dimensional vector space of dimension $n\geq 9$ over an algebraically closed field $\F$ of
characteristic $p\geq 0$. Let $G$ be a finite irreducible subgroup of $\GL(V)$ which is primitive and
tensor-indecomposable on $V$. 
Define $\nu_G(V)$ to be the minimum dimension of $(\lambda g-1)V$ for $g\in G$, $\lambda$ a scalar with $\lambda g\neq
1$.
Then either $\nu_G(V)> \max\left\{2,\frac{\sqrt{n}}{2}\right\}$ or one of the following holds:
\begin{itemize}
\item[\rm{(a)}] $G$ is classical in a natural representation;
\item[\rm{(b)}] $G$ is alternating or symmetric of degree $c\geq 7$ and $V$ is the deleted permutation module of
dimension $c-1$ or $c-2$;
\item[\rm{(c)}] $G$ normalizes the group $\PSU_5(4)$ with $p\neq 2$ and $n=10$.
\end{itemize}
\end{theorem}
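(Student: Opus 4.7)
The plan is to apply Aschbacher's theorem on the maximal subgroups of finite classical groups. The hypotheses that $G$ is irreducible, primitive, and tensor-indecomposable eliminate the reducible class $\mathcal{C}_1$, the imprimitive class $\mathcal{C}_2$, and the tensor-product classes $\mathcal{C}_4$ and $\mathcal{C}_7$, so $G$ must lie in one of $\mathcal{C}_3$ (field-extension), $\mathcal{C}_5$ (subfield), $\mathcal{C}_6$ (normalizer of an extraspecial or symplectic-type group), $\mathcal{C}_8$ (stabilizer of a classical form), or the almost-simple class $\mathcal{S}$. Conclusion~(a) captures the classical-in-natural-representation case coming essentially from $\mathcal{C}_8$; the task is to show the remaining geometric classes force $\nu_G(V) > \max\{2,\sqrt{n}/2\}$ and to identify the exceptions inside $\mathcal{S}$.

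For the geometric classes the estimates are comparatively direct. In $\mathcal{C}_3$ a non-scalar element $g$ acts via a semilinear transformation over an extension of degree $r \geq 2$, which forces any eigenspace (in particular that corresponding to $\lambda^{-1}$) to have $\F$-dimension at most $n/r$, and hence $\nu_G(V) \geq n(1-1/r)$, comfortably above $\sqrt{n}/2$ for $n\geq 9$. A parallel argument handles $\mathcal{C}_5$ by passing to a subfield module. For $\mathcal{C}_6$ the dimension $n$ is a prime power and $|G|$ is bounded polynomially in $n$, so explicit character computations on the extraspecial normal subgroup suffice to exclude elements with small $\nu_G(V)$ once $n$ is large enough; the finite list of small prime-power dimensions can be dispatched by hand.

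The genuine obstacle is the almost-simple class $\mathcal{S}$, which requires the Classification of Finite Simple Groups together with the Landazuri--Seitz lower bounds (and their later refinements by Liebeck, Seitz, and others) on the minimal dimension of a faithful irreducible projective representation of a quasisimple group. For each quasisimple candidate $G$ one inspects every irreducible module of dimension $n$ and bounds $\nu_G(V)$ from below by analysing the Jordan form of unipotent elements and the eigenvalue multiplicities of semisimple elements; the Scott bound on $\dim(g-1)V$ is useful to eliminate most cases. Exception~(b) emerges because a transposition acting on the deleted permutation module shifts only two coordinates, giving $\nu=2$ while $n=c-1$ or $c-2$ can be arbitrarily large. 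Exception~(c) is a low-dimensional accident for $\PSU_5(4)$ in one specific $10$-dimensional representation. The bulk of the work therefore concentrates on small-rank groups of Lie type in defining characteristic, where long-root unipotent elements may have unusually small $(\lambda g-1)V$; here one needs detailed knowledge of conjugacy class invariants on each low-dimensional irreducible $kG$-module, which is where the technical heart of the Guralnick--Saxl argument lies.
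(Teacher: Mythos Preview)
The paper does not contain a proof of this theorem: it is quoted verbatim as \cite[Theorem 7.1]{GS} and used as a black box (see the application in Theorem~\ref{p<>3}). There is therefore no proof in the paper to compare your proposal against.

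As a sketch of the Guralnick--Saxl argument your outline is broadly in the right spirit---Aschbacher reduction followed by a CFSG-based case analysis of almost-simple groups with an element of small $\nu$---but several of the geometric-class arguments are imprecise. For instance, your $\mathcal{C}_3$ bound is not correct as stated: a semilinear element over a degree-$r$ extension can have eigenspaces of $\F$-dimension larger than $n/r$ (think of an element lying in the $\F$-linear subgroup of the $\Gamma\mathrm{L}$-type group), so one must separate the cases where the field automorphism induced is trivial or not. Your treatment of $\mathcal{C}_5$ is also off: over an algebraically closed $\F$ there is no subfield class in the Aschbacher sense, and in any event the actual Guralnick--Saxl argument does not proceed by running Aschbacher over $\F$ but rather by bounding $\nu$ directly for each quasisimple $G$ via its representation theory. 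Finally, your $\mathcal{C}_6$ sketch (``explicit character computations on the extraspecial normal subgroup suffice'') hides real work: elements of the extraspecial group itself have $\nu = n/r$ for $n=r^m$, which already exceeds $\sqrt{n}/2$, but elements of the normalizer outside the extraspecial group need a separate argument. These are all fixable, but your write-up as it stands is an outline, not a proof; the substantive content of \cite{GS} lies precisely in the case-by-case analysis you allude to in your last paragraph.
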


In Section \ref{action} we analyse the characteristic polynomial of the commutator $[x,y]$: its roots play an important 
role, e.g, for the irreducibility of $\langle x,y\rangle$ and  the application of  Theorem \ref{GS8}. The 
$(2,3)$-generation of $\SU_{n}(q^2)$ for $n\neq 8,11$  is proved in Section \ref{non
3}.
Unfortunately, the cases  $q\in \{ 2,3, 5,7,8,11\}$ require  an ad-hoc analysis, see Section \ref{eccez}.
Finally, in Sections \ref{8} and \ref{11} we consider the cases $n=8,11$, respectively.

\section{Generators and preliminary results}\label{gene}

Let $q=p^f$, where $p$ is a prime. Suppose $n=3m+r\geq 8$ with $r=0,1,2$ and denote
by $\C=\{e_1,\ldots,e_n\}$ the canonical basis of $V=\F^n$, where $\F$ is the algebraic closure of $\F_p$.
Denote by $\omega\in \F$  a primitive cubic root of unity.
Making  $\GL_n(q^2)$ act on the left on $V$, we define our generators $x_n=x_n(a),y_n=y_n(a)$ of  
respective orders $2$ and $3$, via their action on $\C$.
For $q>2$  they depend on the parameter $a\in \F_{q^2}$, for which we will always assume that
\begin{align}
\label{1} \left\{\begin{array}{c}
\F_{q^2}=\F_p[a^3], \quad c:=a^{q+1}-4\neq 0 \equad \\ 
\gamma:=a^3+a^{3q}-6a^{q+1}+8 \neq 0 
\end{array}\right. & \textrm{ if } p\neq 3,\\
\label{3} \F_{q^2}=\F_3[a]\equad  a^{q}+a^{q-1}+1=0\qquad  &  \textrm{ if } p= 3.
\end{align}
We will show that there always exists $a$ satisfying the above conditions and further ones which are sufficient 
to guarantee
that  $\langle (-1)^n x_n(a),y_n(a)\rangle=\SU_n(q^2)$
(see Propositions \ref{car ne 3} and \ref{policar3}).

The matrices $x_n=x_n(a),y_n=y_n(a)$ act on $\C$ as 
follows:
\begin{itemize}
\item if $r=0$ and $n>9$ then $x_n$ fixes both $e_1$ and $e_2$;
if $r=1$ then $x_n$ swaps $e_1$ and $e_2$ and, if $(n,q)\neq (10,2)$, $x_n$ fixes $e_3$;
if $r=2$ then $x_n $ swaps $e_1$ with $ e_3$ and $ e_2$ with $ e_4$;
\item if  $n\neq 8,11$ then $x_n$ fixes both $ e_{n-7}$ and $e_{n-4}$ when $q>2$
and it swaps $ e_{n-7}$ and $e_{n-4}$ when $q=2$;
\item if $n=9$ then $x_9$ fixes $e_1$; if $n=11$ then $x_{11}$ fixes $e_7$;
\item $x_n$ fixes $e_{3j+5+r}$ for any $j=0,\ldots,m-5$;
\item $x_n$ swaps $e_{3j+3+r}$ and $e_{3j+4+r}$ for any $j=0,\ldots,m-3$;
\item  $x_n$ acts on  $\langle e_{n-3},e_{n-2},e_{n-1},e_n\rangle$ as the matrix
 $$
 \begin{pmatrix}
0 & 1& 0 & 0\\
1 & 0 & 0& 0\\
0 & 0 & -1 & a\\
0 & 0 & 0 & 1
      \end{pmatrix} \; \textrm{ if } q>2 \textrm{ and } p\neq 3,$$
$$ \begin{pmatrix}
0 & 1& 0 & 0\\
1 & 0 & 0& 0\\
0 & 0 & 0 & 1\\
0 & 0 & 1 & 0
      \end{pmatrix} \; \textrm{ if }  p= 3,\quad \quad
      \begin{pmatrix}
  1  &  1  & \omega   & 0 \\
  1  &  0  & \omega &  \omega \\
 \omega^2  & \omega^2 &    0 & 1 \\
 0 & \omega^2 &  1 &   1 
      \end{pmatrix}\;  \textrm{ if }  q=2;$$
\item  $y_n$ acts on $\langle e_1,\ldots,e_{n-3}\rangle$ as the permutation
$$\prod_{j=0}^{m-2} \left(e_{3j+1+r},e_{3j+2+r},e_{3j+3+r}\right);$$
\item  $y_n$ acts on  $\langle e_{n-2},e_{n-1},e_n\rangle$ as the matrix
 $$ \begin{pmatrix}
bc^{-1} & -a^q\gamma c^{-2} & 2\gamma c^{-2}\\
1 & -bc^{-1} & -b^qc^{-1}\\
0 & 1 & 0
      \end{pmatrix} \; \textrm{ if } q>2 \textrm{ and } p\neq 3,$$
$$ \begin{pmatrix}
1 & 0 & a\\
-a^q & 1 & a\\
0 & 0 & 1
      \end{pmatrix} \; \textrm{ if }  p= 3,\quad\quad
      \begin{pmatrix}
0 & 0 & 1\\
1 & 0 & 0\\
0 & 1 & 0
      \end{pmatrix} \; \textrm{ if }  q=2,$$
where $b:=2a-a^{2q}$.
\end{itemize}
\noindent Note that $\gamma=-(a^q b+a b^q+2c)=\gamma^q$.

For $q>2$ the similarity invariants (i.e., the nontrivial invariant factors) of $x_n$ and $y_n$ are, respectively,
\begin{equation}\label{sim_x}
d_1(t)=\ldots=d_{m-r}(t)=t-1,\quad d_{m-r+1}(t)=\ldots =d_{2m}(t)=t^2-1
\end{equation}
and
\begin{equation}\label{sim_y}
d_{1}(t)=\ldots=d_r(t)=t-1,\quad d_{r+1}(t)=\ldots = d_{m+r}(t)=t^3-1.
\end{equation}
Notice that $\det(y_n)=1$ and that $\det(x_n)=(-1)^n$. More precisely, 
$\det(x_n)=-1$ when $m$ is odd and  $r=0,2$, or when $m$ is even and $r=1$.

Now, recalling the definition of $c$ and $\gamma$ given in \eqref{1}, let
$$J_n=\left\{ \begin{array}{ll}
\I_n  & \textrm{ if } q=2,\\
\diag(c\I_{n-2}, J_2) & \textrm{ if } p\neq 3 \textrm{ and } q>2,\\
\diag(\I_{n-2}, J_2) & \textrm{ if } p = 3,
\end{array}\right.$$
where $ J_2=\gamma c^{-1} \begin{pmatrix}
2 & -a^q \\
-a & 2
\end{pmatrix}$ if $p\neq 3$ and $J_2=\begin{pmatrix}
0 & 1 \\
1 & 0
\end{pmatrix}$ if $p=3$.

Clearly $J_n^T=J_n^\psi$, where $\psi: \GL_n(q^2)\rightarrow \GL_n(q^2)$ is defined by $\psi((a_{i,j}))=
( a_{i, j}^q) $.
Also, $\det(J_n)=-c^{n-3}\gamma^2$ if $p\neq 3$ and $q>2$, and $\det(J_n)=-1$ if $p=3$.
Since
$$x_n^T J_n x_n^\psi = J_n \equad y_n^T J_n y_n^\psi = J_n,$$
it follows that
$\langle x_n,y_n\rangle\leq \U_n(q^2)$ and $\langle (-1)^n x_n, y_n\rangle\leq \SU_n(q^2)$.
We will use the obvious fact that, if  the projective image of $\langle x_n,y_n\rangle$ is $\PSU_n(q^2)$,
then $\langle (-1)^n x_n,y_n\rangle=\SU_n(q^2)$ with $(-1)^n x_n$ again an involution.

From now on, we set 
$$H=\langle (-1)^n x_n, y_n \rangle.$$
and we simply write $x,y$ for $x_n,y_n$.

\begin{lemma}\label{tracce}
Let $q>2$. For  $p\neq 3$ assume that $b+a c = a^{q+2}-a^{2q}-2a \neq 0$. 
Then, $a$ belongs to the subfield generated by the traces of the elements of $H$.
Thus, $H$ is not conjugate to a subgroup of $\GL_n(q_0)$ for any $q_0<q$ by the assumption 
$\F_p[a^3]=\F_{q^2}$.
\end{lemma}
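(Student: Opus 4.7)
The plan is to exhibit $a$ as an explicit expression in the traces of a few short words in $x$ and $y$. Let $K\subseteq\F_{q^2}$ denote the subfield generated by the traces of the elements of $H$; it suffices to prove $a\in K$, since then $\F_p[a^3]\subseteq\F_p[a]\subseteq K\subseteq\F_{q^2}$ together with the assumption $\F_p[a^3]=\F_{q^2}$ forces $K=\F_{q^2}$, and the second assertion follows at once: a conjugate of $H$ contained in $\GL_n(q_0)$ would have all its traces in $\F_{q_0}\cap\F_{q^2}$, which is a proper subfield of $\F_{q^2}$ whenever $q_0<q$.

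The main computation is $\tr(xy)$. Since $y$ permutes $\{e_1,\ldots,e_{n-3}\}$ as a product of $3$-cycles and $x$ acts on almost the entire basis as a signed permutation, the product $xy$ sends most $e_i$ to a different basis vector, so $(xy)_{ii}=0$. A case-by-case check of the boundary positions---the initial vectors (depending on $r\in\{0,1,2\}$), the two extra fixed vectors $e_{n-7},e_{n-4}$, the last $3$-cycle $(e_{n-5},e_{n-4},e_{n-3})$ of $y$, and the end block $\langle e_{n-3},\ldots,e_n\rangle$ of $x$---shows that the only non-vanishing diagonal entry is $(xy)_{n-1,n-1}$, coming from the combination of $x\,e_n=ae_{n-1}+e_n$ with the $-bc^{-1}$ entry of $y$ at $e_{n-1}$. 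This gives
\[
\tr(xy)\;=\;bc^{-1}+a\;=\;\frac{b+ac}{c}\;=\;\frac{a^{q+2}-a^{2q}-2a}{a^{q+1}-4},
\]
which is non-zero by the hypothesis $b+ac\neq 0$ and by $c\neq 0$ from \eqref{1}.

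Next, $a^{q^2}=a$ yields $c,\gamma\in\F_q$, so the Frobenius $\phi\colon t\mapsto t^q$ of $\F_{q^2}$ fixes $c$; since $K$ is a subfield of $\F_{q^2}$ it is $\phi$-stable, and $\phi(\tr(xy))=a^q+b^qc^{-1}\in K$. To isolate $b^qc^{-1}$ I would compute one further trace. If $p\neq 2$, an analogous diagonal analysis of $(xy)^2$ gives
\[
\tr((xy)^2)\;=\;(bc^{-1}+a)^2+2\,b^qc^{-1},
\]
so that $b^qc^{-1}=\tfrac{1}{2}\bigl(\tr((xy)^2)-\tr(xy)^2\bigr)\in K$; if instead $p=2$, the analogous calculation for $(xy)^3$ produces $\tr((xy)^3)=\tr(xy)^3+b^qc^{-1}\tr(xy)$, from which $b^qc^{-1}\in K$ after dividing by $\tr(xy)\neq 0$. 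Either way $a^q=\phi(\tr(xy))-b^qc^{-1}\in K$, and hence $a=(a^q)^q\in K$, as desired.

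The main obstacle is the explicit diagonal bookkeeping in Step~1: for each value of $r\in\{0,1,2\}$ (and avoiding the excluded parameter pairs) one must verify that the ``bulk'' permutation part of $xy$ contributes no non-zero diagonal entries, and a similar but longer verification is needed for $(xy)^2$ or $(xy)^3$. The role of the assumption $b+ac\neq 0$ is precisely to force $\tr(xy)\neq 0$, which is needed both so that $\tr(xy)$ genuinely records information about $a$ and to permit the division by $\tr(xy)$ in the characteristic $2$ version of the second step.
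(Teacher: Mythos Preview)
Your proposal covers only $p\ne 3$: the formula $\tr(xy)=(b+ac)/c$ uses $b,c$ and the $p\ne 3$ shape of the last $4\times 4$ block of $x$, but for $p=3$ that block is $\left(\begin{smallmatrix}0&1&0&0\\1&0&0&0\\0&0&0&1\\0&0&1&0\end{smallmatrix}\right)$ and $y$ acts differently on $\langle e_{n-2},e_{n-1},e_n\rangle$. This is a genuine gap, though it is the easy half: for $p=3$ a direct check of $xy$ on the end block gives $(xy)_{n,n}=a$ and all other diagonal entries vanish, so $\tr(xy)=a$ and you are done in one line. You should state this case separately.

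For $p\ne 3$ your route is \emph{different} from the paper's and works, but it is worth noting what each buys. The paper computes $\tr(xy)$, $\tr(yxy)$, $\tr((yxy)^2)$, $\tr((yxy)^3)$ and checks the single identity
\[
\tr(xy)+\tr((yxy)^2)-\frac{\tr((yxy)^3)}{\tr(yxy)}=a,
\]
using $b+ac\ne 0$ exactly to ensure $\tr(yxy)=(b+ac)^q/c\ne 0$. Your argument instead computes only $\tr(xy)$ and $\tr((xy)^2)$ (resp.\ $\tr((xy)^3)$ when $p=2$), then uses the nice observation that $K$, being a subfield of $\F_{q^2}$, is stable under $t\mapsto t^q$ to obtain $a^q+b^q/c\in K$ ``for free'' and hence $a\in K$. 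This Frobenius trick is elegant and actually shows that for $p\ge 5$ you never need the hypothesis $b+ac\ne 0$; you invoke it only when $p=2$, to divide by $\tr(xy)$.

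Your remaining obligation --- that the permutation part of $xy$ (resp.\ $(xy)^2$, $(xy)^3$) contributes nothing to the diagonal --- is real but of the same nature as in the paper. In fact the permutation piece of $xy$ on $\{e_1,\dots,e_{n-4}\}$ is a single path feeding into the end block (not a union of cycles), so it has no fixed points and no $2$- or $3$-cycles; once you say this, the vanishing for $\tr(xy)$, $\tr((xy)^2)$, and $\tr((xy)^3)$ follows simultaneously, uniformly in $n$ and $r$.
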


\begin{proof}
Suppose first that $p\neq 3$ and consider the traces of the following elements:
$$\begin{array}{rclcrcl}
\tr(xy) & =& a+\frac{b}{c}, & \quad & \tr(yxy) & = &  a^q+\frac{b^q}{c}\neq 0, \\
\tr((yxy)^2) & =& \frac{b^{2q}-(c+2)bc}{c^2}, & \quad & 
\tr((yxy)^3) & =& \frac{b^{2q}-(c+1)bc}{c^2}\cdot \tr(yxy).\\
\end{array}$$
Note that these values do not depend on $n$.
It follows that
$$\tr(xy)+\tr((yxy)^2)-\frac{\tr((yxy)^3)}{\tr(yxy)}=a.$$
Finally, if $p=3$, then $\tr(xy)=a$.
\end{proof}

\begin{lemma}\label{Nbeta}
Recall  $q=p^f$. If $f$ is not a $2$-power, write $f=\bar f\ell$, where $\ell$ is the smallest odd prime dividing $f$.
Given the polynomials
$$g_1(t)=t^{q+1}-\kappa\;\; \;(\kappa \in \F_q^*) \equad
g_2(t)=t^{q+1}+t^{q}+t,$$
call $\Nb_1$ the number of roots $\alpha$ of $g_1(t)$ such that $\F_p[\alpha^3]=\F_{q^2}$,  
$\Nb_2$ the number of roots $\alpha$ of $g_2(t)$ such that $\F_p[\alpha]=\F_{q^2}$. Then:
\begin{center}
\begin{tabular}{|c|c|c|c|}\hline
& if  $f = 2^s$, $s\geq 0$ & if $f$  is an odd prime & otherwise\\\hline &&&\\[-10pt]
$\Nb_1\geq $ & $q-5$ & $q-3p-8$ & $p^{3\bar f-1} $\\
$\Nb_2\geq $ & $q-1$ & $q-p$ & $p^{3\bar f-1}$\\\hline
\end{tabular}
\end{center}
\end{lemma}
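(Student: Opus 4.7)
The plan is to count the roots of $g_1$ and $g_2$ in $\F_{q^2}$ and bound, in each regime of $f$, the number of those roots which fail the generation condition. A root (respectively its cube) fails precisely when it lies in some maximal proper subfield of $\F_{q^2}$, so the heart of the argument is a subfield-by-subfield count.

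First I would check separability and location. The polynomial $g_1(t)=t^{q+1}-\kappa$ has derivative $(q+1)t^q$, which is nonzero on $\F^*$, so $g_1$ has $q+1$ distinct roots; each satisfies $\alpha^{q^2-1}=\kappa^{q-1}=1$ and hence lies in $\F_{q^2}^*$. The polynomial $g_2(t)=t(t^q+t^{q-1}+1)$ is likewise separable, and any nonzero root satisfies $\alpha^q=-\alpha/(\alpha+1)$, which iterates to $\alpha^{q^2}=\alpha$; thus $g_2$ contributes $q$ nonzero roots in $\F_{q^2}^*$.

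Next I would enumerate the maximal proper subfields of $\F_{q^2}=\F_{p^{2f}}$: these are the $\F_{p^{2f/r}}$ for primes $r\mid 2f$. The subfield $\F_q$ (from $r=2$) always appears; it is the unique one when $f=2^s$, is joined by $\F_{p^2}$ when $f$ is an odd prime, and in the composite case one gets in addition $\F_{p^{2f/r}}$ for each odd prime $r\mid f$, the largest being $\F_{p^{2\bar f}}$. For each such $\F_{p^d}$ I would bound the bad roots by reducing $q$ modulo $p^d-1$. For $g_2$, a root $\alpha\in\F_{p^d}$ satisfies $\alpha^q=\alpha^{p^{f\bmod d}}$; when $d\mid f$ this forces $\alpha=-2$, and when $d=2d_0\nmid f$ (so $f/d_0$ is odd) it forces $\alpha$ to be a root of $t^{p^{d_0}+1}+t^{p^{d_0}}+t$, with at most $p^{d_0}$ nonzero solutions in $\F_{p^{2d_0}}$. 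For $g_1$ I would set $\beta=\alpha^3$ so that $\alpha^{q+1}=\kappa$ gives $\beta^{q+1}=\kappa^3$; the same modular reduction converts this, for $\beta\in\F_{p^d}$, into $\beta^2=\kappa^3$ (at most $2$ solutions) when $d\mid f$, and into the norm equation $\beta^{p^{d_0}+1}=\kappa^3$ (at most $p^{d_0}+1$ solutions) when $d=2d_0\nmid f$. Each admissible $\beta$ lifts to at most $\gcd(3,q+1)\leq 3$ values of $\alpha$ satisfying $\alpha^{q+1}=\kappa$.

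Assembling the bounds gives the three table entries. When $f=2^s$ only $\F_q$ contributes: at most $2\cdot 3=6$ bad roots for $g_1$ and only $\alpha=-2$ for $g_2$, so $\Nb_1\geq q-5$ and $\Nb_2\geq q-1$. When $f$ is an odd prime, the extra contribution from $\F_{p^2}$ adds at most $3(p+1)$ bad roots for $g_1$ and up to $p$ nonzero roots of $t^p+t^{p-1}+1$ for $g_2$; since the $\F_q$-bad root $\alpha=-2$ of $g_2$ already lies in $\F_p\subset\F_{p^2}$, the totals become $\Nb_1\geq q-3p-8$ and $\Nb_2\geq q-p$. In the composite case, the total bad count for either polynomial grows only like $p^{\bar f}$, much smaller than $q-p^{3\bar f-1}\geq p^{3\bar f-1}(p-1)$ since $q=p^{\bar f\ell}\geq p^{3\bar f}$. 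The main difficulty is the subfield counting: one must cleanly carry out the reduction of $q\bmod(p^d-1)$ for each divisor $d$ of $2f$, and for $g_1$ carefully track the $\gcd(3,q+1)$ factor when lifting $\beta$ back to $\alpha$ (worst when $3\mid q+1$, which produces the $3p+9$ loss in the $f$-prime case).
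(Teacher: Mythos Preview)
Your approach is essentially the same as the paper's: show that $g_1,g_2$ split completely in $\F_{q^2}$, then bound the ``bad'' roots by examining each maximal proper subfield of $\F_{q^2}$. Your reduction of $\alpha^q$ via $q\bmod(p^d-1)$ reproduces exactly the paper's computations for $\F_q$ (giving $t^6-\kappa^3$ and $\{0,-2\}$) and for $\F_{p^2}$ when $f$ is an odd prime (giving $t^{3p+3}-\kappa^3$ and $t^{p+1}+t^p+t$), so the first two columns of the table are handled identically.

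The only place your write-up is thinner than it needs to be is the ``otherwise'' column. Saying the bad count ``grows only like $p^{\bar f}$'' is correct in spirit---each maximal subfield $\F_{p^{2f/r}}$ contributes at most $3(p^{f/r}+1)$ bad roots for $g_1$ and $p^{f/r}$ for $g_2$, and $f/\ell=\bar f$ dominates---but you still have to sum over all odd prime divisors of $f$ and check the resulting inequality against $p^{3\bar f-1}$. The paper avoids this bookkeeping by a cruder device: it bounds the total number of elements lying in \emph{any} proper subfield of $\F_{q^2}$ not contained in $\F_q$ by $M=\sum_{i=1}^{2\bar f}p^i\le 2p^{2\bar f}$, triples this for $g_1$, and then verifies $q-5-6p^{2\bar f}\ge p^{3\bar f-1}$ directly, handling the borderline cases $(p,\bar f)\in\{(2,2),(2,3),(3,2)\}$ (i.e.\ $q=64,512$ and $q=3^{2\ell}$) by explicit computation. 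Your sharper per-subfield estimate would also close, but as written it is only a sketch; either carry the sum through or adopt the paper's coarser but self-contained bound.
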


\begin{proof}
Since the norm function $\F_{q^2}^\ast \to \F_{q}^\ast$ is an epimorphism, $g_1(t)$ has $q+1$ distinct roots in $\F_{q^2}^\ast$.
Clearly $g_2(t)$ is separable: we show that all its roots belong to $\F_{q^2}$.
To this purpose, let $\alpha$ be a root of $g_2(t)$. Then 
(\emph{i}) $\alpha^{q}(\alpha+1)+\alpha=0$ and (\emph{ii}) $\alpha^q=-\alpha(\alpha^q+1)$. Raising (\emph{i}) to the $q$ we get 
$\alpha^q=-\alpha^{q^2}(\alpha^q+1)$ whence, using (\emph{ii}), $(\alpha^q+1)(\alpha^{q^2}-\alpha)=0$. 
Now, $\alpha^q=-1$ gives the contradiction $-\alpha-1+\alpha=0$. We conclude that $\alpha^{q^2}=\alpha$.

Calling $M_1$ the number of roots $\alpha$ of $g_1(t)$ such that $\F_p[\alpha^3]\ne \F_{q^2}$ and 
$M_2$ the number of roots $\alpha$ of $g_2(t)$ such that $\F_p[\alpha]\ne \F_{q^2}$, 
we have 
$$\Nb_i= q+1 -M_i .$$
Note that the roots $\alpha$ of $g_1(t)$ such that $\alpha^3\in \F_q$ are roots of $t^6-\kappa^3$:
so they are at most $6$. 
Similarly, $g_2(t)$ has $2$ roots in $\F_q$, namely $0,-2\in \F_p$.
In particular, when $f=2^s$, we have $M_1\le 6$ and $M_2\le 2$ as $\F_q$ is the unique maximal subfield of $\F_{q^2}$.
\smallskip

\noindent \textbf{Case 1.} $f=2k+1$ an odd prime.
For $g_1(t)$ we need to exclude also the roots $\alpha$ such that $\alpha^3\in \F_{p^2}$, i.e., those such that
$\alpha^{3p^2}=\alpha^3$.
It follows  $\alpha^{3q}=\alpha^{3p^{2k+1}}= \alpha^{3p^{2k}p}= \alpha^{3p}$. 
So these roots satisfy the equation $t^{3p+3}=\kappa^3$. We deduce $M_1\leq 6+ 3p+3=3p+9$.
For $g_2(t)$ we need to exclude also the roots $\alpha\in \F_{p^2}$, i.e., those for which
$\alpha^{p^2}=\alpha$. It follows $\alpha^q= \alpha^{p^{2k+1}}= \alpha^p$. 
So these roots satisfy  the equation $t^{p+1}+t^{p}+t=0$. 
Thus $M_2\leq p$, since $0,-2\in \F_p$.\\
\noindent \textbf{Case 2.} $f = \ell \bar f$,  where $\ell$ is as in the statement and $\bar f>1$.
The elements $\beta\in \F_{q^2}\setminus \F_q$ such that $\F_p[\beta]\ne \F_{q^2}$ lie in subfields 
of order $p^s$ with $s\le {2\bar f}$. Hence they are at most:
$$ M=\sum_{i=1}^{2 \bar f}p^i=p+p^2+\dots +p^{2\bar f}
=\frac{p}{p-1}\left(p^{2\bar f}-1\right)\leq  2 p^{2\bar f}.$$

For each $\beta \in \F_{q^2}$,
there are at most $3$ roots $\alpha$ of $g_1(t)$ such that $\alpha^3=\beta$.
Hence there are at most $3M$ roots $\alpha$ of $g_1(t)$ such that $\F_p[\alpha^3]=\F_{q_0}\neq \F_q, \F_{q^2}$.
If $(p,\bar f)\not \in \{(2,2),(2,3),(3,2) \}$, we obtain
$$\Nb_1\ge q-5- 6 p^{2\bar f} \geq p^{3\bar f} -6 p^{2\bar f}-5
=p^{2\bar f} (p^{\bar f}-6)-5\geq p^{3 \bar f-1}.$$
If $(p, \bar f)=(2,2)$, then $M=30$ and so $\Nb_1\geq 2^{2\ell}-95$; if $(p, \bar f)=(2,3)$, then $M=126$, 
whence $\Nb_1\geq 2^{3\ell}-383$. In both cases, if $\ell\neq 3$, then $\Nb_1\geq 2^{3\bar f-1}$.
If $(p,\bar f)=(3,2)$, then $M\leq 120$ and so again $\Nb_1\geq 3^{2\ell}-365\geq 3^6-365\geq 3^{3\bar f-1}$.
When $q=64, 512$, a direct computation shows that $\Nb_1\geq  60$ and $\Nb_1\geq 486$, respectively.

Recalling that the roots of $g_2(t)$ which are in $\F_q$ are also in $\F_p$ we have:
$$\Nb_2\ge q + 1-2 p^{2\bar f}\geq p^{2\bar f} (p^{\bar f}-2)+1\ge p^{3\bar f-1}.$$
\end{proof}

\begin{lemma}\label{ei}
Suppose $q>2$. If $W$ is an $\langle x,y\rangle$- or an $\langle x,y\rangle^T$-invariant subspace of $\F^n$ 
such that $e_{n-1}\in W$ or $e_n\in W$, then $W=\F^n$.
\end{lemma}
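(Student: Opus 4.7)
Plan: The goal is to show that, under the hypotheses, $W$ must contain every standard basis vector $e_i$, hence $W=\F^n$. The argument proceeds as a downward ``sweep'' from the indices near $n$ using the explicit action of $x$ and $y$ (or their transposes) on the basis.

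First I would reduce to the case $e_{n-1},e_n\in W$. If $e_n\in W$, then for $p\neq 3$ the relation $xe_n=ae_{n-1}+e_n$ yields $ae_{n-1}\in W$, so $e_{n-1}\in W$ since $a\neq 0$; for $p=3$, $xe_n=e_{n-1}$ directly. If $e_{n-1}\in W$ and $p=3$, then $xe_{n-1}=e_n$ gives $e_n\in W$. If $e_{n-1}\in W$ and $p\neq 3$, I would compute $ye_{n-1}$ and $y^2e_{n-1}$; after eliminating the $e_{n-1}$-components, these produce the vector $\gamma c^{-3}(a^{3q}-8)\,e_{n-2}\in W$. This coefficient is nonzero, for otherwise $(a^3)^q=8$, and since $8\in\F_p$ is fixed by Frobenius and $x\mapsto x^q$ is a bijection of $\F_{q^2}$, the preimage $a^3$ must equal $8$, contradicting $\F_p[a^3]=\F_{q^2}$. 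Thus $e_{n-2}\in W$, and then $ye_{n-1}+bc^{-1}e_{n-1}=-a^q\gamma c^{-2}e_{n-2}+e_n$ yields $e_n\in W$. For the transpose case, reading the rows of the last $3\times 3$ block of $y$ shows $y^Te_n=e_{n-1}$ at once.

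With both $e_{n-1},e_n\in W$, applying $y$ to $e_{n-1}$ and subtracting the known components gives $e_{n-2}\in W$, and then $xe_{n-2}=e_{n-3}$ delivers $e_{n-3}\in W$. From here the proof is a mechanical propagation: the $y$-cycle $(e_{n-5},e_{n-4},e_{n-3})$ gives $e_{n-4},e_{n-5}$; the $x$-swap of $e_{n-5}$ and $e_{n-6}$ (for $m\ge 3$) gives $e_{n-6}$; the next $y$-cycle $(e_{n-8},e_{n-7},e_{n-6})$ gives $e_{n-7},e_{n-8}$; and so on. Iterating this ``cycle then swap'' pattern sweeps in all basis vectors down through the main run of indices, and the initial segment $e_1,\ldots,e_{r+1}$ is mopped up using the special description of $x$: swap $e_1\leftrightarrow e_2$ if $r=1$, and swaps $e_1\leftrightarrow e_3$, $e_2\leftrightarrow e_4$ if $r=2$; the exceptional small cases $n=8,9,10,11$ are resolved directly from the prescribed $x$-action on their first coordinates.

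The transpose case follows the same template: $x^T$ enacts the same transpositions (transposition matrices being symmetric), while $y^T$ reverses the 3-cycles, but the same cascade down from $e_n$ still works via the rows of the $3\times 3$ and $4\times 4$ blocks. The main obstacle is simply the case bookkeeping ($r\in\{0,1,2\}$, $p=3$ vs.\ $p\neq 3$, and the exceptional configurations of $x$ when $n\in\{8,9,10,11\}$), but once the key nonvanishing $a^{3q}\neq 8$ has been secured from the arithmetic hypothesis $\F_p[a^3]=\F_{q^2}$, the remainder of the proof is a direct trace through the definitions of $x$ and $y$.
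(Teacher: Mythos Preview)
Your proposal is correct and follows essentially the same approach as the paper: show that $e_{n-2}\in W$ using the action of $y$ (respectively $y^T$) on the last three coordinates, then propagate downward through the basis using the permutation-like action of $x$ and $y$. The paper is slightly more economical in that it does not first secure both $e_{n-1}$ and $e_n$; instead it observes directly that $\langle e_{n-2},e_{n-1},e_n\rangle$ is $y$- and $y^T$-invariant and that the $3\times 3$ matrix with columns $e_j,ye_j,y^2e_j$ (respectively $e_j,y^Te_j,(y^T)^2e_j$) has nonzero determinant for $j\in\{n-1,n\}$, giving $e_{n-2}\in W$ in one step. Your phrase ``eliminating the $e_{n-1}$-components'' is a little imprecise (you are really row-reducing the three vectors $e_{n-1},ye_{n-1},y^2e_{n-1}$ to isolate $e_{n-2}$, which amounts to the same determinant computation), but the argument is sound and the needed nonvanishing $a^{3q}\neq 8$ is justified exactly as you say.
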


\begin{proof}
The subspace $\left\langle e_{n-2},e_{n-1},e_{n} \right\rangle$ is $y$-invariant and $y^T$-invariant.

If  $p\neq 3$, the $3\times 3$ matrix whose columns are the last three components of $e_j, ye_j, 
y^2e_j$ has  determinant $(a^{3q}-8)\gamma c^{-3}$ or  $(a^3-8)\gamma c^{-3}$ 
according to $j=n-1$ or $j=n$. Similarly,
the matrix whose columns are the last three components of $e_j, y^Te_j, \left(y^T\right)^2e_j$ 
has determinant $-1$  for $j=n-1, n$. It follows that $e_{n-2}\in W$.

If $p=3$, then  $x e_n = x^T e_n=e_{n-1}$ and $a^{-1}(ye_n -e_n) - x e_n = -a^{-q} (y^T e_{n-1}-e_{n-1}-ax^T 
e_{n-1})=e_{n-2}$.
Again, we obtain  that $e_{n-2}\in W$. 

By induction, one can see that, for any $j\leq n-3$, there exists an element $g_j$ of $\langle x,y\rangle$ 
such that $g_je_{n-2}=e_j$. The same holds also for $\langle x,y\rangle^T$.
We conclude that $\C=\{e_1,\ldots,e_n\}\subseteq W$, whence our claim.
\end{proof}

Given $g\in \Mat_n(\F)$, $\lambda\in \F$ and a $g$-invariant subspace $W$ of $\F^n$, define
$$W_\lambda(g)=\left\{w\in W \mid gw=\lambda w\right\}.$$

\begin{lemma}\label{rem d}
Let $U$ be a $G$-invariant subspace of $V=\F^n$, where $G\le \GL_n(\F)$. 
\begin{itemize}
 \item[(1)]  There exists a complement $\overline{U}$ of $U$ which is $G^T$-invariant;
 \item[(2)]  for each  $g\in G$ and any $\lambda\in \F$,
$$\dim V_\lambda(g) \le \dim U_\lambda (g)+\dim  \overline U_\lambda (g^T).$$
\end{itemize}
In particular, if $\sigma$ is a simple eigenvalue of $g\in G$ (i.e., $\sigma$ is a root of multiplicity $1$ of the
characteristic polynomial of $g$), then either
\begin{itemize}
 \item[(i)] $g_{|U}$ has the eigenvalue $\sigma$; or
 \item[(ii)] $g^T_{|\overline{U}}$ has the eigenvalue $\sigma$.
\end{itemize}
\end{lemma}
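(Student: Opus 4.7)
For (1), I would pick any basis $e_1,\ldots,e_k$ of $U$ and extend it to a basis $e_1,\ldots,e_n$ of $V$. Because $U$ is $G$-invariant, every $g\in G$ is represented in this basis by a block upper triangular matrix with diagonal blocks $A$ of size $k\times k$ and $D$ of size $(n-k)\times (n-k)$; transposing yields a block lower triangular matrix with diagonal blocks $A^T$ and $D^T$, which clearly stabilises $\overline U:=\langle e_{k+1},\ldots,e_n\rangle$. Since $\overline U$ is a complement of $U$ by construction, part (1) follows.

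For (2), the plan is to exploit the short exact sequence of $G$-modules $0\to U\to V\to V/U\to 0$. Restricting to $\lambda$-eigenspaces (via a snake-lemma computation, or directly by writing $g-\lambda I$ in block form) yields the exact sequence
$$0\to U_\lambda(g)\to V_\lambda(g)\to (V/U)_\lambda(\bar g),$$
where $\bar g$ is the operator induced by $g$ on $V/U$; taking dimensions gives $\dim V_\lambda(g)\le \dim U_\lambda(g)+\dim (V/U)_\lambda(\bar g)$. It then remains to identify the last term with $\dim \overline U_\lambda(g^T)$: in the bases chosen above, $\bar g$ is represented by $D$ while $g^T_{|\overline U}$ is represented by $D^T$, and the kernel dimensions of $D-\lambda I$ and $D^T-\lambda I$ coincide because a matrix and its transpose have equal rank.

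For the final assertion, a simple eigenvalue $\sigma$ of $g$ satisfies $\dim V_\sigma(g)=1$, so the inequality in (2) forces $\dim U_\sigma(g)+\dim \overline U_\sigma(g^T)\ge 1$, yielding (i) or (ii). The whole argument is essentially formal once the complement $\overline U$ is produced; the only point to watch is that both sides of the inequality in (2) are geometric (kernel) multiplicities, which is why the transition from $\dim(V/U)_\lambda(\bar g)$ to $\dim \overline U_\lambda(g^T)$ requires the remark on equality of ranks for $D$ and $D^T$ rather than on algebraic multiplicities.
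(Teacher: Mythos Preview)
There is a genuine gap in your proof of (1). When you say that transposing the block upper triangular representation of $g$ ``clearly stabilises $\overline U:=\langle e_{k+1},\ldots,e_n\rangle$'', you are tacitly assuming that the transpose of the matrix of $g$ in your chosen basis equals the matrix of the operator $g^T$ in that same basis. This fails unless the basis is the standard one: if $P$ is the matrix whose columns are your $e_i$, then $g$ in that basis is $P^{-1}gP$, whose transpose $P^Tg^TP^{-T}$ represents $g^T$ with respect to the columns of $P^{-T}$, not of $P$. For a concrete failure take $g=\left(\begin{smallmatrix}1&1\\0&2\end{smallmatrix}\right)$, $U=\langle(1,1)^T\rangle$, and extend by $e_2=(1,0)^T$; then your $\overline U=\langle(1,0)^T\rangle$ satisfies $g^T(1,0)^T=(1,1)^T\notin\overline U$. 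The paper's remedy is to set $\overline U=\langle P^{-T}e_{h+1},\ldots,P^{-T}e_n\rangle$ (now with $e_j$ the standard basis vectors), i.e.\ the annihilator of $U$ under the standard pairing; this is precisely what the block lower triangular shape of $P^Tg^TP^{-T}$ shows to be $G^T$-invariant.

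With $\overline U$ corrected in this way, your proof of (2) is fine and agrees with the paper's: your exact sequence $0\to U_\lambda(g)\to V_\lambda(g)\to (V/U)_\lambda(\bar g)$ is a conceptual rephrasing of the paper's projection onto the last $n-k$ coordinates, and your rank equality for $D$ and $D^T$ is exactly the paper's ``$C$ and $C^T$ are conjugate'' step (in the basis given by the last $n-k$ columns of $P^{-T}$, the restriction $g^T|_{\overline U}$ is represented by $D^T$). The final assertion then follows as you wrote.
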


\begin{proof} 
(1) Choose a basis  $\B=\{u_1, \dots , u_h, w_1, \dots , w_{n-h}\}$ of $\F^n$, where $\{u_1, \dots , u_h\}$ is a basis 
of
$U$ and consider the matrix $P$ whose columns are the vectors
of $\B$. 
Then  $P^{-1}GP$ and its transpose  $P^TG^T P^{-T}$ consist of matrices of respective shapes:
\begin{equation}\label{G}
\overline g=\begin{pmatrix}
A&B\\
0&C
\end{pmatrix},\qquad \overline g^T=\begin{pmatrix}
{A}^T&0\\
{B}^T&{C}^T
\end{pmatrix}.
\end{equation}
It easily  follows that the subspace $\overline U= \left\langle P^{-T}e_{h+1}, \dots , P^{-T}e_{n}\right\rangle$
is $G^T$-invariant.\\
\noindent (2) Substituting $G$ with $P^{-1}GP$ we may suppose that $U=\langle e_1, \ldots, e_h\rangle$, 
$\overline U=\langle e_{h+1}, \ldots, e_n\rangle$ and  
$g=\overline g$ as in \eqref{G}. Call  $\pi: \F^n\to \F^{n-h}$ the projection onto the last $n-h$ coordinates.
Then $\pi \left(V_\lambda (g)\right)$ coincides with the subspace $W$ of $\F^{n-h}$ defined by:
$$W= \left\{w\in\F^{n-h}\mid Cw=\lambda w, \ Bw\in \im (\lambda I_h-A)\right\}.$$
Moreover $\Ker( \pi)\cap V_\lambda(g)=U_\lambda (g)$.
Since $C$ and $C^T$ are conjugate, we have 
$\dim W\leq \dim \{w\in\F^{n-h}\mid Cw=\lambda w \}=\dim \{w\in\F^{n-h}\mid C^T w=\lambda w \}=
\dim \overline U_\lambda (g^T)$. It follows 
$\dim V_\lambda(g)=\dim U_\lambda(g)+\dim W\leq \dim U_\lambda(g)+\dim \overline U_\lambda (g^T).$
\end{proof}

Given a finite group $G$, let $\varpi(G)$ be the set of the prime divisors of $|G|$.
For simplicity, if $g \in G$,  we write $\varpi(g)$ for $\varpi(\langle g \rangle)$.
In Proposition \ref{23578} we use the following result due to Liebeck, Praeger
and Saxl. Namely, we construct suitable elements $g_i\in H$ such that
$\cup\varpi(g_i)=\varpi(\SU_n(q^2))$ and conclude that $H=\SU_n(q^2)$. 
For the order of the unitary groups, one can refer to \cite{Ho}.

\begin{lemma}\label{LPS}
Let $M$ be a subgroup of $G=\PSU_n(q^2)$, $n\geq 7$.
If $\varpi(M)=\varpi(G)$, then $M=G$.
\end{lemma}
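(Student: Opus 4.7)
The plan is to argue by contrapositive: if $M$ is a \emph{proper} subgroup of $G=\PSU_n(q^2)$ with $n\geq 7$, then some prime dividing $|G|$ fails to divide $|M|$. Enlarging $M$ if necessary, we may assume it is maximal, so the Aschbacher--Kleidman--Liebeck classification of maximal subgroups applies. Every maximal subgroup of $G$ then lies in one of the eight geometric families $\mathcal{C}_1,\ldots,\mathcal{C}_8$ (stabilisers of subspaces, direct-sum decompositions, extension fields, subfields, tensor decompositions, normalisers of symplectic-type $r$-groups, imprimitive tensor decompositions, stabilisers of a classical form) or in the almost simple family $\mathcal{S}$. In each case we must produce a prime $r$ with $r\mid |G|$ and $r\nmid |M|$.

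The main tool is Zsygmondy's theorem. Writing
$$|\PSU_n(q^2)|=\frac{q^{n(n-1)/2}}{\gcd(n,q+1)}\prod_{i=2}^n\bigl(q^i-(-1)^i\bigr),$$
one obtains, for each relevant $i\leq n$ and away from a small well-understood list of exceptions, a primitive prime divisor $r_i$ of $q^{2i}-1$ whose multiplicative order modulo $p$ equals $2i$. The primes $r_{2n}$ and $r_{2(n-1)}$ lie in $\varpi(G)$, and a subgroup whose order is divisible by $r_{2n}$ is forced to contain elements of very large order, which is incompatible with most Aschbacher classes.

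The geometric cases are then treated one by one, using only the standard order formulas. Parabolic subgroups ($\mathcal{C}_1$) have order divisible only by primitive primes of index at most $2(n-1)$, missing $r_{2n}$; stabilisers of direct-sum or tensor decompositions ($\mathcal{C}_2,\mathcal{C}_4,\mathcal{C}_7$) split the spectrum between factors of dimensions strictly smaller than $n$, so at least one of $r_{2n},r_{2n-2}$ is lost; extension-field subgroups ($\mathcal{C}_3$) contribute primitive primes only at indices dividing $2n/k$ with $k\geq 2$; subfield subgroups ($\mathcal{C}_5$) involve a strictly smaller $q$, losing the top primitive prime; and classes $\mathcal{C}_6$ and $\mathcal{C}_8$ either have orders far too small or are sections of smaller-rank classical groups, easily disposed of by the same argument.

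The main obstacle is the almost simple class $\mathcal{S}$: here one invokes the classification of finite simple groups together with the Landazuri--Seitz--Zalesskii lower bounds on the degrees of faithful projective representations of simple groups in cross-characteristic. For $n\geq 7$ these bounds reduce the candidates for $M/Z(M)$ to a short list (small alternating groups, a handful of sporadic groups, and low-rank Lie-type groups), and for each one a direct inspection of $|M|$ exhibits a prime in $\varpi(G)\setminus\varpi(M)$. The bound $n\geq 7$ is precisely what makes the Zsygmondy primes large enough that no accidental equality of prime spectra survives; at smaller dimensions, embeddings such as $\PSL_2(7)\leq \PSU_3(9)$ would destroy the statement.
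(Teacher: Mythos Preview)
The paper does not actually prove this lemma: its entire proof is a citation of \cite[Corollary~5 and Table~10.7]{LPS}, where Liebeck, Praeger and Saxl classify all proper subgroups of almost simple groups sharing the full prime spectrum of the ambient group. Your proposal is therefore not a different proof of the lemma so much as a sketch of how one would \emph{reprove} the relevant part of that cited theorem for $\PSU_n(q^2)$. The overall architecture you describe---reduce to a maximal subgroup, run through Aschbacher's classes, and eliminate each with a suitable Zsygmondy prime---is indeed the standard route, so the plan is sound.

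That said, as written it is a plan rather than a proof, and there are genuine gaps. First, your Zsygmondy bookkeeping is off: the top factor of $|G|$ is $q^n-(-1)^n$, so a primitive prime divisor of $q^{2n}-1$ lies in $\varpi(G)$ only when $n$ is odd; for even $n$ the correct large primes are $r_n$ and $r_{2(n-1)}$, and the Zsygmondy exceptions (e.g.\ $q=2$ with small exponent) must also be tracked separately. Second, and more seriously, the class-$\mathcal{S}$ case cannot be dismissed with ``a short list \dots and a direct inspection'': even after Landazuri--Seitz, enumerating the almost simple groups with a faithful projective representation of degree at most $n$ over $\overline{\F_p}$ and comparing their prime spectra against $\varpi(\PSU_n(q^2))$ for \emph{all} $q$ is a substantial case-by-case verification---this is exactly the content of the tables in \cite{LPS} that the paper cites. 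So your outline is correct in spirit, but turning it into an honest proof would essentially reproduce a chunk of \cite{LPS}; for the purposes of this paper, the citation is both shorter and safer.
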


\begin{proof}
It follows from \cite[Corollary 5 and Table 10.7]{LPS}.
\end{proof}

\section{Action of the commutator for  $n\neq 8,11$ and $q>2$}\label{action}

For $q>2$ consider the commutator $[x,y]=x^{-1}y^{-1}xy$ and  set
$$\S=\left\{e_{n-7}, e_{n-4},e_{n-3},e_{n-2},e_{n-1},e_{n}\right\}.$$
If $xe_{n-7}=e_{n-7}$, which happens precisely when $n\neq 8,11$, then $[x,y]e_{n-3}=e_{n-7}$. In this case, the subset 
$\C\setminus \S$ and the subspace $\langle \S \rangle$ are 
$[x,y]$-invariant. 
This is why, in this section, we always assume that $n=9,10$ or $n\geq 12$.

The commutator  $[x,y]$  acts on  $\C \setminus \S $ as the  permutation:
$$\begin{array}{ll}
  (e_3,e_4)\prod\limits_{i=0}^{\frac{n-12}{3}}(e_{2+3i}, e_{7+3i}, e_{6+3i}), & \textrm{ if } r=0 \textrm{ and } n\geq 
9;\\
 (e_1, e_5, e_4, e_2) \prod\limits_{i=0}^{\frac{n-13}{3}}(e_{3+3i}, e_{8+3i}, e_{7+3i}),& \textrm{ if } r=1 \textrm{ 
and }    
n\geq 10;\\
(e_1, e_6, e_5, e_3, e_4, e_9, e_8, e_2)\prod\limits_{i=1}^{\frac{n-14}{3}}(e_{4+3i}, e_{9+3i}, e_{8+3i}), & \textrm{ 
if } r=2 
\textrm{ and } n\geq 14.
\end{array}$$
It follows that  the restriction of $[x,y]$ to
$\langle \C\setminus \S\rangle$ has order 
$$\left\{\begin{array}{cl}
2 & \textrm{ if } n=9;\\
4 & \textrm{ if } n=10;\\
8 & \textrm{ if } n=14;\\
3\cdot 2^{r+1} & \textrm{ if } n=12,13 \textrm{ or } n\geq 15.
\end{array}\right.$$

The characteristic polynomial of the restriction of $[x,y]$ to $\langle \S \rangle$ is
$\chi(t)=(t-1)(t+1)\chi_0(t)$ with
\begin{equation}\label{chi0}
\chi_0(t)=\left\{ \begin{array}{ll}
t^4 +\frac{(c+2)(\gamma-c^2)}{c^2} t^3+\frac{\gamma+2c}{c}t^2+\frac{(c+2)(\gamma-c^2)}{c^2} t+1 & \textrm{ if } p\neq 
3,\\[3pt]
(t-1)^2 (t^2 - a^{q+1} t +1) & \textrm{ if  } p=3.
\end{array}\right.
\end{equation}
By the previous discussion, the $24$-th power of the commutator
$$C=[x,y]^{24}$$
has a fixed points space of codimension $\leq 4$. Moreover, for  a fixed $a$, its order does not
depend on $n$. When possible, we impose conditions on $a$ that make $C$  a bireflection.

\begin{lemma} \label{poli}
Suppose $p\neq 3$ and $a^{q+1}=3$. Then $\gamma=a^3- 10 +\frac{ 27}{a^3}$ and
$$\chi_0(t)=(t+\omega)(t+\omega^{-1})(t^2 + \gamma t + 1).$$
The order of the roots of  $t^2 + \gamma t + 1$ does not divide $48$ whenever
$$\begin{array}{clccl}
(1) &  \gamma+j\neq 0,\;\; j\in \{0, \pm 1, \pm 2\}; & \quad & (2) &  \gamma^2-j\neq 0, \;\; j=2,3;\\
(3) &  \gamma^4-4\gamma^2+j\neq 0,\;\;  j=1,2; & \quad & (4) &  \gamma^8-8\gamma^6+20\gamma^4-16\gamma^2+1\neq 0.
\end{array}$$
Under such conditions, there are exactly two eigenvalues of $[x,y]$ whose order does not divide $48$:
they are
$$\sigma^{\pm 1}=a^{\pm 3}\quad \textrm{ if } p=2 \equad \sigma^{\pm 1} = \frac{-\gamma\pm \sqrt{\gamma^2-4}}{2}\quad 
\textrm{ if } p\geq 5.$$
\end{lemma}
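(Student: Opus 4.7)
The plan is to compute $\gamma$ and $\chi_0(t)$ explicitly under the hypothesis $a^{q+1}=3$, then identify, for each divisor $d$ of $48$, a single polynomial condition on $\gamma$ that excludes the roots of $t^2+\gamma t+1$ from having order exactly $d$, and finally show no other eigenvalue of $[x,y]$ can have order outside $48$.

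First I would substitute $a^{q+1}=3$ into the defining expressions: since $a^{3q}=(a^{q+1})^3/a^3=27/a^3$, this yields $\gamma=a^3+27/a^3-10$, and $c=a^{q+1}-4=-1$. Plugging $c=-1$ into \eqref{chi0} gives $\chi_0(t)=t^4+(\gamma-1)t^3+(2-\gamma)t^2+(\gamma-1)t+1$, and using $\omega+\omega^{-1}=-1$ a direct multiplication verifies
$$\chi_0(t)=(t^2-t+1)(t^2+\gamma t+1)=(t+\omega)(t+\omega^{-1})(t^2+\gamma t+1).$$

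Next, let $\sigma,\sigma^{-1}$ be the roots of $t^2+\gamma t+1$ in $\overline{\F}_p$, so that $u:=\sigma+\sigma^{-1}=-\gamma$. The order of $\sigma$ divides $48$ exactly when $u=\zeta+\zeta^{-1}$ for some $\zeta$ with $\zeta^{48}=1$, equivalently when $u$ is a root of the minimal polynomial $\Psi_d(u)$ of $\zeta+\zeta^{-1}$ for $\zeta$ a primitive $d$-th root of unity, $d$ ranging over $\{1,2,3,4,6,8,12,16,24,48\}$. I would compute each $\Psi_d$ either from the Chebyshev-type recursion $P_0=2$, $P_1=u$, $P_{n+1}=uP_n-P_{n-1}$ (so $P_n(u)=\sigma^n+\sigma^{-n}$), or from the cyclotomic polynomials; for instance, $\Phi_{48}(z)=z^{16}-z^8+1$ gives, after dividing by $z^8$ and substituting $u=z+z^{-1}$, the polynomial $u^8-8u^6+20u^4-16u^2+1$. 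One finds $\Psi_d$ linear for $d\in\{1,2,3,4,6\}$, matching the five exclusions in~(1); quadratic ($u^2-2$ and $u^2-3$) for $d\in\{8,12\}$, matching~(2); quartic ($u^4-4u^2+j$, $j=1,2$) for $d\in\{16,24\}$, matching~(3); and of degree $8$ for $d=48$, matching~(4). Since $\Psi_d$ is even in $u$ for $d\geq 4$, the substitution $u=-\gamma$ gives the same polynomial in $\gamma$. Hence (1)--(4) express exactly that $-\gamma$ avoids every $\Psi_d$, so $\sigma$ has order not dividing $48$.

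For the final count, recall that $[x,y]$ acts on $\langle\C\setminus\S\rangle$ as a permutation of order dividing $24$, so all its eigenvalues there are $48$-th roots of unity; the other four eigenvalues on $\langle\S\rangle$, coming from $(t-1)(t+1)(t+\omega)(t+\omega^{-1})$, have orders $1,2,6,6$, again dividing $48$. Thus only the two roots $\sigma^{\pm 1}$ of $t^2+\gamma t+1$ can have order not dividing $48$, and by the previous step they do. As for their explicit form: in characteristic $\geq 5$ the quadratic formula yields $\sigma^{\pm 1}=(-\gamma\pm\sqrt{\gamma^2-4})/2$, while in characteristic $2$ the hypothesis $a^{q+1}=3=1$ forces $a^q=a^{-1}$, the terms $-6a^{q+1}+8$ vanish from $\gamma$, and $\gamma=a^3+a^{-3}$; hence $t^2+\gamma t+1=(t-a^3)(t-a^{-3})$ and $\sigma^{\pm 1}=a^{\pm 3}$. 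The main technical obstacle is the bookkeeping of the polynomials $\Psi_d$, in particular verifying the degree-$8$ expression corresponding to $d=48$; once this dictionary is in place the rest is routine.
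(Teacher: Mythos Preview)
Your proposal is correct and follows essentially the same strategy as the paper: both reduce the question to checking when $t^2+\gamma t+1$ shares a root with $t^{48}-1$, and both arrive at conditions (1)--(4) by running through the cyclotomic factors $\Phi_d$ for $d\mid 48$. The only difference is presentational: the paper groups the cyclotomic polynomials into four blocks of $t^{48}-1$ and invokes a MAGMA $\XGCD$ computation, whereas you work divisor by divisor via the minimal polynomials $\Psi_d(u)$ of $\zeta+\zeta^{-1}$, which makes the argument self-contained and explains transparently why the conditions fall into the degree pattern $5\times 1,\ 2\times 2,\ 2\times 4,\ 1\times 8$. One small quibble: $\Psi_4(u)=u$ is odd rather than even, so your parity remark should start at $d\ge 8$; this is harmless since the zero locus of $u$ is symmetric anyway.
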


\begin{proof} 
The factorization of $\chi_0(t)$ is routine. While, over $\Q$, we have:
\begin{eqnarray*}
t^{48}-1 & =& (t^8+t^6-t^2-1)(t^8-t^6+2t^4-t^2+1)\cdot\\
& & \cdot (t^{16}-t^{12}+2t^8-t^4+1)(t^{16}-t^8+1). 
\end{eqnarray*}
The conditions are obtained calculating $\XGCD\left( p_i(t),t^2+\gamma t + 1\right)$
in MAGMA \cite{MAG}, where $p_i(t)$ runs among the factors of $t^{48}-1$.
Conditions  (1) are obtained considering the irreducible factors $p_i(t)$ of $t^8+t^6-t^2-1$.
Conditions (2)--(4) are obtained in a similar way considering the other irreducible factors.

The fact that $\sigma\neq \sigma^{-1}$ follows from conditions (1). Namely, if $p=2$ these conditions imply  
$\gamma=a^3+a^{-3}\neq 0$ and, if $p\geq 5$, they imply $\gamma \neq \pm 2$.
\end{proof}

\begin{proposition}\label{car ne 3} 
If $p\neq 3$ and $q\neq 2,5,7,8,11$, there exists $a$ satisfying the following 
condition, which includes \eqref{1}:
\begin{equation}\label{cond2}
\begin{array}{l}
\F_{q^2}=\F_p[a^3], \quad a^{q+1}=3 \textrm{ and the roots of } \\
t^2+\gamma t + 1 \textrm{ have order not dividing } 48.
\end{array}
\end{equation}
\end{proposition}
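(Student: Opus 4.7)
The plan is to apply Lemma \ref{Nbeta} with $\kappa = 3$ to the polynomial $g_1(t) = t^{q+1} - 3$: this polynomial has $q+1$ distinct roots in $\F_{q^2}^\ast$, and at least $\Nb_1$ of them already satisfy $\F_p[a^3] = \F_{q^2}$. For any such $a$ one has $c = a^{q+1} - 4 = -1 \ne 0$, and since $a^{3q} = 27/a^3$ the formula $\gamma = a^3 + a^{3q} - 6a^{q+1} + 8$ from \eqref{1} collapses to $\gamma = a^3 + 27/a^3 - 10$. So condition \eqref{cond2} reduces to finding a root $a$ of $g_1(t)$ for which all of the polynomial conditions (1)--(4) of Lemma \ref{poli} hold.

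Next I would bound the number of ``bad'' roots. A root $\sigma$ of $t^2 + \gamma t + 1$ has order dividing $48$ precisely when $\sigma$ is a $48$-th root of unity, in which case $\gamma = -(\sigma + \sigma^{-1})$. Pairing $\sigma$ with $\sigma^{-1}$ shows that at most $25$ distinct values of $\gamma$ can arise when $p \ne 2$; when $p = 2$ the identity $t^{48} - 1 = (t^3 - 1)^{16}$ restricts $\sigma$ to the three cube roots of unity, so only $\gamma \in \{0, 1\}$ is possible. For each such bad $\gamma$, the equation $a^3 + 27/a^3 = \gamma + 10$ is a quadratic in $a^3$, yielding at most $2$ values of $a^3$ and hence at most $6$ values of $a$. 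Therefore no more than $150$ of the $q+1$ roots of $t^{q+1} - 3$ can violate Lemma \ref{poli}, a bound independent of $q$.

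Combining the two estimates, the number of $a \in \F_{q^2}$ satisfying \eqref{cond2} is at least $\Nb_1 - 150$. Using the three cases of Lemma \ref{Nbeta} one checks that this quantity is strictly positive for all $q$ outside a short, explicit list of small prime powers. The remaining task is to treat these small $q$: for each $q \notin \{2, 5, 7, 8, 11\}$ not covered by the counting bound, one verifies directly (for instance by a MAGMA enumeration over the roots of $t^{q+1} - 3$) that a valid $a$ exists. I expect this finitary verification to be the main technical hurdle, since the crude bound of $150$ forces a handful of small prime powers (such as $q = 4, 13, 16, 17, 19, 23, 25, 49, 121$) to be treated by an explicit computer search; a sharper count of bad $\gamma$'s could shorten this list but at the cost of extra bookkeeping.
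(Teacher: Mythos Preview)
Your proposal is correct and follows essentially the same approach as the paper: bound the number of bad roots of $t^{q+1}-3$ by $150$ (the paper obtains the identical figure $6\cdot 5+12\cdot 2+24\cdot 2+48=150$ via the conditions of Lemma~\ref{poli}, which is just a repackaging of your direct count of the values $\gamma=-(\sigma+\sigma^{-1})$), invoke Lemma~\ref{Nbeta}, and then handle the residual small $q$ explicitly. The only substantive step you leave open is the finite verification; the paper dispatches it not by a blind search but by exhibiting, for each leftover $q$, a specific $a$ (e.g.\ $a=\sqrt{-3}$, $a=\tfrac{1+\sqrt{-11}}{2}$, etc., or a minimal polynomial as in Table~\ref{tab2}), and for $p=2$ it sharpens your bound of $12$ bad $a$'s to the condition $a^9+1\neq 0$, reducing the characteristic-$2$ check to $q=4$ alone.
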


\begin{proof}
By Lemma \ref{poli} it suffices to check that $\Nb_1> 6\cdot 5 +12\cdot 2 + 24 \cdot 2+48= 150$. 
We apply Lemma \ref{Nbeta} taking $g_1(t)=t^{q+1}-3$.
If $f=2^s$, $s\geq 0$, we have $\Nb_1 \geq q-5$ and $q-5 > 150$ for all $q> 155$.
If $f$ is an odd prime, we have $\Nb_1\geq q-3p-8$ and $ p^3-3p-8 > 150$ for all $p\geq 7$.
For the other values of $f$, we have  $\Nb_1\geq p^5$ and $p^5>150$ for all $p\geq 5$.
So, we are left to check the following cases:
$$\begin{array}{clccl}
(1) &  p=2 \textrm{ and  } 1 \leq f \leq 7; & \quad &  (2) & p=5 \textrm{ and  }1\leq f \leq 3;\\
(3) & p=7,11 \textrm{ and }f=1,2; & \quad & (4) & 13\leq q=p\leq 151.
\end{array}$$
Note that for $p=2$, conditions (1)--(4) of Lemma \ref{poli} simply become 
$a^9+1\neq 0$. Hence, if $q=2^f$ with $f>3$, our claim is true as  $\Nb_1 \geq  11>9$.
For $q\in \{ 4, 25, 49, 121, 125 \}$ take $a\in \F_{q^2}$ according to Table \ref{tab2}.
If $q=p \in \{ 17, 23, 29, 41, 47,$ $53, 59, 71, 83, 89, 101, 107, 113, 131,137, 149  \}$,
take $a=\sqrt{-3}$; if $q=p\in \{ 13, 43,61,$ $73, 79, 109, 127, 139, 151\}$, take
$a=\frac{1+\sqrt{-11}}{2}$;
if $q=p\in \{ 37, 103\}$, take
$a=-3+\sqrt{6}$; if $q=p\in \{ 19, 31, 67, 97 \}$, take
$a=\frac{-5+\sqrt{13}}{2}$.
\end{proof}

\begin{table}[ht]
$$\begin{array}{|cc|cc|cc|}\hline
q & m(t) & q & m(t) & q & m(t) \\\hline
4 & t^4+t^3+t^2+t+1 &
25 & t^4+2t^3+t-1  & 
49 & t^4+3 t^3+2 t+2 \\
121 & t^4-t^3-t^2-3t-2 & 
125 & t^6-t^5+t+2  & &\\ \hline
  \end{array}$$
\caption{Minimal polynomial $m(t)$ of $a$ over $\F_p$.}\label{tab2}
\end{table}

We now consider the case $p=3$.

\begin{proposition} \label{policar3}
Suppose $p= 3$. Then there exists $a$  satisfying \eqref{3}.
For any such $a$, provided that $q\neq 3$ and 
$a^4 -a^3 + a^2 + a-1=0$ when $q=9$, 
the factor $\chi_1(t)=t^2-a^{q+1} t+1$  of $\chi_0(t)$ defined in 
\eqref{chi0} has two distinct roots 
$$\sigma^{\pm 1}= \frac{a^2\pm (a-1)\sqrt{a^2-a-1}}{a+1}$$
whose order does not divide $48$.
 \end{proposition}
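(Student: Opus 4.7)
My plan has three stages. First, I would establish the existence of $a$ satisfying \eqref{3} using Lemma~\ref{Nbeta}. Second, I would derive the closed-form formula for $\sigma^{\pm1}$ from the relation $a^{q+1}=-a^2/(a+1)$ (which follows from \eqref{3}) via the quadratic formula, and verify the discriminant is nonzero. Third, I would show $\sigma^{16}\ne 1$, which in characteristic $3$ is equivalent to the order of $\sigma$ not dividing $48$ (since $t^{48}-1=(t^{16}-1)^3$), by a case analysis on the subfield containing $\tau=a^{q+1}$.

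The first stage is routine: multiplying $a^q+a^{q-1}+1=0$ by $a$ identifies $a$ as a nonzero root of $g_2(t)=t^{q+1}+t^q+t$, so Lemma~\ref{Nbeta} supplies such an $a$. For $q=9$ I would handle the additional condition $a^4-a^3+a^2+a-1=0$ by checking irreducibility of this quartic over $\F_3$ together with the factorization
$$t^9+t^8+1=(t-1)(t^4-t^3+t^2+t-1)(t^4+t^2-t+1)\quad\textrm{in }\F_3[t],$$
showing that its roots lie in $\F_{81}=\F_{q^2}$ and satisfy \eqref{3}. For the second stage, the discriminant of $\chi_1$ becomes $a^{2(q+1)}-4=(a^4-(a+1)^2)/(a+1)^2$; using $a^2+a+1=(a-1)^2$ in characteristic $3$, this factors as $(a-1)^2(a^2-a-1)/(a+1)^2$, and the stated formula follows from the quadratic formula with $\frac{1}{2}=-1$ in $\F_3$. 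Distinctness amounts to $a\ne 1$ (which contradicts $\F_3[a]=\F_{q^2}$) and $a^2-a-1\ne 0$ (otherwise $a\in\F_9$, forcing $q=3$).

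For the third stage, if $\sigma^{16}=1$ then $\sigma\in\F_{81}$ and $\tau\in\F_q\cap\F_{81}=\F_{3^{\gcd(f,4)}}$. I would split into three cases by the subfield of $\tau$. If $\tau\in\F_3$ then $\tau\in\{0,1,-1\}$ forces $a=0$, $a=1$, or $a\in\F_9$ respectively, each contradicting \eqref{3} when $q\ne 3$. If $\tau\in\F_9\setminus\F_3$ then $\sigma$ has order $8$ and $\tau^2=-1$ reduces to $a^4+a^2-a+1=0$, whose roots lie in $\F_{81}$, forcing $q=9$; these roots are precisely excluded by the proviso $a^4-a^3+a^2+a-1=0$, the other irreducible quartic factor of $g_2(t)/t$. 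Finally, if $\tau\in\F_{81}\setminus\F_9$ then $\sigma$ has order $16$ and $\tau^4=\tau^2+1$; setting $\rho:=\tau^2$, combining $a^{q+1}=\tau$ with \eqref{3} yields $a^2+\tau a+\tau=0$ over $\F_{81}$, with discriminant $\rho-\tau$. The hardest part---and the main obstacle---is the norm identity $(\rho-\tau)(\rho+\tau)=\rho^2-\rho=1$, which places $\rho-\tau$ in the kernel of the norm $\F_{81}^*\to\F_9^*$, a cyclic subgroup of order $10$ contained in the squares of $\F_{81}^*$. Hence the quadratic splits over $\F_{81}$, forcing $a\in\F_{81}$ and $[\F_3[a]:\F_3]\le 4<2f$, contradicting \eqref{3} whenever $4\mid f$.
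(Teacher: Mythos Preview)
Your proposal is correct and complete. It differs from the paper's argument mainly in the third stage, so a brief comparison is warranted.

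The paper proceeds purely computationally: after reducing to $\sigma^{16}\neq 1$, it computes the extended $\gcd$ of $\chi_1(t)$ with $t^8-1$ and with $t^8+1$ (implicitly using \eqref{3} to write $a^{q+1}=-a^2/(a+1)$), obtaining the three quartic conditions $a^4+a^2-a+1=0$, $a^4+a^3-1=0$, $a^4-a^3+a+1=0$. It then asserts that in each case $a\in\F_{81}$, so one is forced to $q\le 9$ and the proviso handles $q=9$. The paper does not spell out why its $\gcd$ condition for $t^8-1$ omits the degenerate factors coming from $\tau\in\F_3$ (these are absorbed into the separate discriminant discussion and the hypothesis $\F_3[a]=\F_{q^2}$).

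Your route is more structural: you stratify by the subfield containing $\tau=\sigma+\sigma^{-1}$, which corresponds exactly to the order of $\sigma$ being $\le 4$, $8$, or $16$. For orders $\le 4$ and $8$ your analysis recovers the paper's condition $a^4+a^2-a+1=0$ plus the degenerate cases, and the irreducibility of that quartic forces $q=9$. For order $16$ you avoid writing down the two quartics altogether: instead of checking their irreducibility, you use the identity $\tau^8=-1$ (whence $\rho=\tau^2\in\F_9$ and $\tau^9=-\tau$) to recognise $(\rho-\tau)(\rho+\tau)=\rho^2-\rho=1$ as the $\F_{81}/\F_9$-norm of the discriminant, placing it in the order-$10$ kernel and hence among the squares of $\F_{81}^*$. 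This is an elegant replacement for the paper's implicit irreducibility checks and makes transparent \emph{why} $a$ must land in $\F_{81}$.

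Both approaches are short; the paper's is faster to execute with a computer algebra system, while yours is self-contained and explains the field-theoretic mechanism behind the three quartic conditions.
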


\begin{proof} 
The polynomial $\chi_1(t)$ has two distinct roots unless $a^2-a-1=0$, in which case $a \in \F_{3^2}$.
Now we impose that $\sigma^{16}\neq 1$.
Writing $t^{16}-1=(t^8-1)(t^8+1)$, we get: 
$$\begin{array}{lclcl}
\gcd(\chi_1(t), t^{8}-1)\ne 1& \Longleftrightarrow & a^4+a^2-a+1=0;\\
\gcd(\chi_1(t), t^{8}+1)\ne 1 & \Longleftrightarrow & (a^4+a^3-1)(a^4-a^3+a+1) = 0.
\end{array}$$
In both cases, it follows that $a\in \F_{9^2}$: it suffices to take $a$ as in the statement.
\end{proof}

\section{The case  $n\neq 8,11$ and $q\ne 2,3,5,7,8,11$}\label{non 3}

Throughout this section, we suppose that  $q\ne 2,3,5,7,8,11$ and $n=9,10$ or $n\geq 12$.
Moreover we assume that $a$ satisfies conditions \eqref{cond2} when $p\neq 3$, 
conditions \eqref{3} when $p=3$ and the further condition $a^4-a^3+a^2+a-1=0$ when $q=9$.

By the results of Section \ref{action}, the subspaces  $\langle \S \rangle$ and $\langle \C\setminus \S \rangle$ are 
both $[x,y]$-invariant and $[x,y]^T$-invariant. 
Moreover, $[x,y]_{|\langle \C\setminus \S \rangle}$ has order 
dividing $24$.
On the other hand, $[x,y]_{|\langle \S \rangle}$ has 
characteristic polynomial:
$$
\chi(t)=\left\{
\begin{array}{ll}
 (t-1)(t+1)(t+\omega)(t+\omega^{-1})(t-\sigma)(t-\sigma^{-1}) &  \textrm{ if } p\neq 3,\\
(t-1)^3(t+1)(t-\sigma)(t-\sigma^{-1})  & \textrm{ if } p = 3.
\end{array}\right.
$$
Recall that $\sigma^{24}\neq \pm 1$. 
The fixed points space of $[x,y]_{|\langle \S \rangle}$ is generated by
\begin{equation}\label{v}
v=\left\{\begin{array}{ll}
(\zeri{n-9}, 0, a^3-6, 0,0, a, a^3-6, a^3-6, 2a^2, 4a) & \textrm{ if } p\neq 3,\\
(\zeri{n-9}, 0, a+1, 0,0, 1, a+1, a+1, -1, -1) & \textrm{ if } p=3,
\end{array}\right.
\end{equation}
where $\zeri{i}$ means a sequence of $i$ zeros.
Thus, denoting by $\Jor(i)$ a unipotent Jordan block of size $i$, the Jordan form 
of  $[x,y]_{|\langle \S \rangle}$ is:
\begin{equation}\label{Jordan}
\left\{\begin{array}{ll}
\diag(1,-1,-\omega,-\omega^{-1}, \sigma, \sigma^{-1}) & \textrm{ if } p \geq 5,\\
\diag(\Jor(2) , \omega, \omega^{-1}, \sigma, \sigma^{-1}) & \textrm{ if } p =2,\\
\diag(\Jor(3), -1, \sigma, \sigma^{-1}) & \textrm{ if } p =3.
\end{array}\right.
\end{equation}
It follows that $C=[x,y]^{24}$ is a bireflection and that $C$ and $C^T$ are diagonalizable.

We now determine some relevant eigenvectors of $[x,y]_{|\langle \S \rangle}$ and of its transpose.

\noindent \textbf{Case $p\neq 3$.} Taking $b=2a-a^{2q},c, \gamma$ as in \eqref{1}, from conditions  \eqref{cond2}, we get
$$b=\frac{2a^3-9}{a^2}, \quad b^q=\frac{6-a^3}{a}, \quad c=-1,\quad a^3 \gamma=a^6-10 a^3+27\neq 0.$$ 
Observe that $b\neq 0$, since $a^3-6=0$ contradicts the hypothesis $\F_p[a^3]=\F_{q^2}$.
The eigenspaces $\langle \S \rangle_{\sigma^{\pm 1}}([x,y])$ and $\langle \S \rangle_{\sigma^{\pm 1}}([x,y]^T)$ 
are generated, respectively, by:
$$\begin{array}{rcl}
s_{\sigma^{\pm 1}} &  = & 
\left(\zeri{n-9}, 0,1,0,0,(\gamma+1)b^{-q},\sigma^{\pm 1}, \sigma^{\mp 1},
-ab^{-q}(\sigma^{\pm 1}+1), (\gamma-2)b^{-q}\right)^T,\\[5pt]
\overline{s}_{\sigma^{\pm 1}}  &=&
\left(\zeri{n-9},0,1, 0,0,\frac{(\gamma+1)}{b}, \sigma^{\mp 1}, \sigma^{\pm 1},
\frac{\pm 3 \gamma(\sigma-\sigma^{-1})}{a b},\frac{\gamma (\sigma^{\mp 1}-2\sigma^{\pm 1}-1) }{ b}\right).
\end{array}$$

\noindent \textbf{Case $p = 3$.} 
The eigenspaces  $\langle \S \rangle_{\sigma^{\pm 1}}([x,y])$ and $\langle \S \rangle_{\sigma^{\pm 1}}([x,y]^T)$ 
are generated, respectively, by:
$$\begin{array}{rcl}
s_{\sigma^{\pm 1}}   & =&  
\left(\zeri{n-9}, 0,1,0,0,\frac{a^2+1}{a^2-1},\sigma^{\pm 1} , \sigma^{\mp 1},
\frac{-a \pm a(a-1)\sqrt{a^2-a-1}}{(a+1)^2(a-1)},\right.\\
& &\left.\frac{-a\mp a(a-1)\sqrt{a^2-a-1}}{(a+1)^2(a-1)} 
\right)^T,\\
\overline{s}_{\sigma^{\pm 1}}  &=&
\left( \zeri{n-9}, 0,1,0,0, -\frac{a^2+a-1}{a-1} ,\sigma^{\mp 1} , \sigma^{\pm 1},
\frac{ a(a+1)^2}{a-1} \mp a\sqrt{a^2-a-1}  , \right.\\
& &\left. \frac{a(a+1)^2}{a-1} \pm a\sqrt{a^2-a-1}\right)^T.
\end{array}$$

For any value of $p$, from $[x,y]^x=[x,y]^{-1}$ it follows $xs_{\sigma}=s_{\sigma^{-1}}$ and $x^T \overline{s}_{\sigma}=\overline{s}_{\sigma^{-1}}$. 

\begin{theorem}\label{Hirr}
The group $\langle x,y\rangle$ is absolutely irreducible.
\end{theorem}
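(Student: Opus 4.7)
The plan is to assume for contradiction that $W$ is a proper nontrivial $\langle x,y\rangle$-invariant subspace of $V=\F^n$ and to force a contradiction via Lemma~\ref{ei}, by showing that either $W$ or a $\langle x,y\rangle^T$-invariant complement $\overline W$ contains $e_{n-1}$ or $e_n$.

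First I would observe that $\sigma$ is a simple eigenvalue of $[x,y]$ viewed as an endomorphism of $V$. By Section~\ref{action}, $[x,y]$ preserves the decomposition $V=\langle\C\setminus\S\rangle\oplus\langle\S\rangle$; its restriction to $\langle\C\setminus\S\rangle$ is a permutation of order dividing $24$ and thus cannot have $\sigma$ as an eigenvalue, since $\sigma^{24}\neq 1$ by Lemma~\ref{poli} and Proposition~\ref{policar3}. The Jordan form~\eqref{Jordan} then shows that $\sigma$ appears on $\langle\S\rangle$ with algebraic multiplicity one, with eigenspace $\langle s_\sigma\rangle$; likewise for $\sigma^{-1}$, with eigenspace $\langle s_{\sigma^{-1}}\rangle$. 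The analogous statement holds for $[x,y]^T$ with eigenvectors $\overline s_\sigma,\overline s_{\sigma^{-1}}$.

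Next, I would apply Lemma~\ref{rem d} to $G=\langle x,y\rangle$, $U=W$, $g=[x,y]$ and $\lambda=\sigma$. Letting $\overline W$ be the $\langle x,y\rangle^T$-invariant complement furnished by part~(1), the simplicity of $\sigma$ gives one of the alternatives: (i) $s_\sigma\in W$, or (ii) $\overline s_\sigma\in\overline W$. In case~(i), the relation $[x,y]^x=[x,y]^{-1}$ forces $xs_\sigma=s_{\sigma^{-1}}$, so both $s_\sigma$ and $s_{\sigma^{-1}}$ lie in $W$. Using the explicit $3\times 3$ block of $y$ on $\langle e_{n-2},e_{n-1},e_n\rangle$ given in Section~\ref{gene}, I would apply $y$ and $y^2$ to these eigenvectors and take suitable linear combinations of $s_{\sigma^{\pm 1}},ys_{\sigma^{\pm 1}},y^2s_{\sigma^{\pm 1}}\in W$ to exhibit $e_{n-1}$ or $e_n$ inside $W$. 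Lemma~\ref{ei} then forces $W=V$, a contradiction. Case~(ii) runs along parallel lines with $\overline s_{\sigma^{\pm 1}}$, the matrix $y^T$ and the $\langle x,y\rangle^T$-branch of Lemma~\ref{ei}, yielding $\overline W=V$ and hence $W=0$, again a contradiction. Since $\F$ is algebraically closed, the irreducibility so established coincides with absolute irreducibility.

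The main obstacle is the explicit linear-algebra step that extracts $e_{n-1}$ or $e_n$ from the $y$-translates of the two $[x,y]$-eigenvectors inside $W$: one must verify the nonvanishing of a certain determinant depending on $\sigma,\gamma,a,b$. This calculation must be carried out separately for $p\neq 3$ and $p=3$, and relies on the running conditions~\eqref{cond2} and~\eqref{3}, in particular $c=-1$, $b\neq 0$, $\gamma\neq 0$ when $p\neq 3$ and $a^2-a-1\neq 0$ when $p=3$, together with $\sigma\neq\sigma^{-1}$ guaranteed by Lemma~\ref{poli} and Proposition~\ref{policar3}.
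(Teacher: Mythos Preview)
Your plan is essentially the paper's own argument: invoke Lemma~\ref{rem d} for the simple eigenvalue $\sigma$ of $[x,y]$, use $x s_\sigma=s_{\sigma^{-1}}$ (resp.\ $x^T\overline s_\sigma=\overline s_{\sigma^{-1}}$) to get both eigenvectors, then manufacture $e_{n-1}$ (resp.\ $e_n$) and apply Lemma~\ref{ei}. One small correction: applying only $y$ and $y^2$ to $s_{\sigma^{\pm 1}}$ is not quite enough, because the resulting six vectors carry independent components at positions $n-8,\dots,n-3$ coming from the permutation part of $y$, and these cannot be cancelled without killing the whole combination. The paper first forms the difference $w=s_\sigma-s_{\sigma^{-1}}$ (supported only on the last four coordinates) and then, crucially, also applies $x$ after $y^2$: for $p\neq 3$ one has $e_{n-1}\in\langle w,\,y^2w,\,xy^2w\rangle$, while for $p=3$ a further application of $y,y^2$ to $u=e_{n-1}-e_n\in\langle w,\,y^2w,\,xy^2w\rangle$ is needed; the transpose case is handled analogously and yields $e_n$.
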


\begin{proof}
Let $U$ be an $\langle x,y\rangle$-invariant subspace of $\F^n$. 
By Lemma \ref{rem d} there exists an $\langle x^T,y^T\rangle$-invariant complement $\overline{U}$.
Since $\sigma$ and $\sigma^{-1}$ are simple eigenvalues of $[x,y]$,  by the same lemma we may assume either $(i)$
$s_{\sigma} \in  U$ or $(ii)$ $\overline{s}_{\sigma} \in \overline{U}$.\\
\noindent \textbf{Case $(i)$.} From $xs_\sigma =s_{\sigma^{-1}}$ it
follows that $s_{\sigma},s_{\sigma^{-1}}\in U$. Hence $w \in \U$, where
\begin{equation}\label{w}
w=\left\{\begin{array}{ll}\frac{1}{\sqrt{\gamma^2-4}}\left(s_{\sigma}-s_{\sigma^{-1}}\right)= \left(\zeri{n-4}, 1, -1, 
-ab^{-q},  
0\right)^T \in U & \textrm{ if } p\neq 3,\\
\frac{\sigma}{\sigma^2-1}
\left(s_{\sigma}-s_{\sigma^{-1}}\right)= \left(\zeri{n-4}, 1, -1, \frac{a}{a^2-1}, \frac{-a}{a^2-1} \right)^T \in U
& \textrm{ if } p=3.
\end{array}\right.
\end{equation}
Direct calculation gives:
$$e_{n-1}=\frac{-b^q(a\gamma+b)}{a^2}w+\frac{a\gamma-b}{a^2}(y^2-xy^2)w,$$
if $p\neq 3$;
$$u:=\frac{(a-1)^2}{a^2}\left( y^2 w-x y^2 w+\frac{1}{a^2-1}w\right )=e_{n-1}-e_n$$
and
$$e_{n-1}=-\frac{a+1}{a^2}\left(u+yu+y^2u\right) $$
if $p=3$.
It follows that $e_{n-1}$  belongs to $U$ for all $p$, whence $U=\F^n$ by   Lemma \ref{ei}. \\
\textbf{Case $(ii)$.} From $x^T \overline{s}_{\sigma}=\overline{s}_{\sigma^{-1}}$ it follows that
$\overline{s}_{\sigma}, \overline{s}_{\sigma^{-1}}\in \overline{U}$. Hence, $\overline{w} \in \overline{U}$, where
\begin{equation*}\label{overline w}
\overline{w}=\left\{
\begin{array}{ll}
\frac{1}{\sqrt{\gamma^2-4}}\left(\overline s_{\sigma}-\overline s_{\sigma^{-1}}\right) = 
\left(\zeri{n-4}, -1, 1, 6\gamma (a b)^{-1},
 -3\gamma b^{-1}\right)^T & \textrm{ if } p\neq 3,\\
 \frac{\sigma}{1-\sigma^2}\left(\overline s_{\sigma}-\overline s_{\sigma^{-1}}\right) =
 \left(\zeri{n-4}, 1, -1, \frac{a^2+a}{a-1} , -\frac{a^2+a}{a-1}\right)^T& \textrm{ if } p= 3.
 \end{array}\right. 
\end{equation*}
For $p\ge 5$, we have:
$$\frac{(ab)^2}{18\gamma }\left(x^Ty^T-y^T\right)\overline 
w+\frac{\left(2a^6-24a^3+81\right)b}
{18a^2\gamma} \overline w  =\tilde u,$$
where $\tilde u=e_{n-1}-\frac{a}{2}e_n$, and
$$e_n =\frac{-2}{a^3-8}\left(a^2\tilde u+2ay^T\tilde u+4(y^T)^2\tilde u\right).$$
For $p=2$ we get: 
$$e_n =\frac{1}{a^8+a^2}\overline w +\frac{1}{a^6+1}\left(y^T+x^Ty^T\right)\overline w.$$
For $p=3$, setting
$$\tilde u =e_{n-1}-e_n = \frac{(a-1)^2}{a^2}\left( x^T y^T \overline w- y^T \overline w-\frac{(a+1)^2}{a-1}\overline 
w\right)$$
we get $\tilde u \in \overline{U}$ and 
$$e_n=\frac{a+1}{a^2}\left (\tilde{u}+y^T \tilde{u}+(y^T)^2 \tilde{u}\right).$$
From $e_n\in \overline{U}$ for all $p$ we get $\overline{U}=\F^n$ by   Lemma
\ref{ei}, whence $U=\{0\}$.
\end{proof}

\begin{lemma}\label{Cy}
The matrix $C$ does not centralize $\left \langle C^y, C^{y^2} \right \rangle$.
\end{lemma}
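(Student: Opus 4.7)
The plan is to prove the equivalent statement $[C, C^y] \ne 1$. Since $y$ has order $3$, conjugating the relation $[C, C^y] = 1$ by $y$ and using $y^3 = 1$ successively gives $[C^y, C^{y^2}] = 1$ and $[C^{y^2}, C] = 1$, so that $C, C^y, C^{y^2}$ pairwise commute and $C$ centralizes $\langle C^y, C^{y^2}\rangle$; the converse is immediate. Hence the centralization in the statement is equivalent to $[C, C^y] = 1$, which I will refute.

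Assume for contradiction $[C, C^y] = 1$. By the analysis in Section~\ref{action}, $C = [x,y]^{24}$ is a diagonalizable bireflection whose two simple non-unit eigenspaces are $\langle s_\sigma\rangle$ and $\langle s_{\sigma^{-1}}\rangle$, both contained in $\langle\S\rangle$, with eigenvalues $\sigma^{\pm 24}\ne 1$; the $1$-eigenspace $U = \ker(C-I)$ has dimension $n-2$ and contains $\langle\C\setminus\S\rangle$. Since $C^y$ commutes with $C$, it must preserve each simple eigenspace of $C$, so $C^y s_\sigma = \mu s_\sigma$ for some scalar $\mu\in\{1,\sigma^{24},\sigma^{-24}\}$; equivalently, $ys_\sigma$ is a $C$-eigenvector.

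I then combine two observations. The first is geometric: $y$ contains the $3$-cycle $(e_{n-8},e_{n-7},e_{n-6})$, so $ye_{n-7}=e_{n-6}$, and an inspection of the other basis vectors in the support of $s_\sigma$ (namely $e_{n-4},e_{n-3}$ and the block $\langle e_{n-2},e_{n-1},e_n\rangle$, which are mapped by $y$ into $\{e_{n-3},e_{n-5}\}$ and back into $\langle e_{n-2},e_{n-1},e_n\rangle$) shows that none of them contributes to the $e_{n-6}$-coordinate. Hence that coordinate of $ys_\sigma$ equals the $e_{n-7}$-coordinate of $s_\sigma$, which is $1$; in particular $ys_\sigma\notin\langle\S\rangle$, ruling out $\mu = \sigma^{\pm 24}$ since both corresponding eigenspaces lie in $\langle\S\rangle$. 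The second ingredient is duality: because $\overline s_\sigma$ is a $\sigma^{24}$-eigenvector of $C^T$ and $\sigma^{24}\ne 1,\sigma^{-24}$, the functional $v\mapsto\overline s_\sigma v$ annihilates $U\oplus\langle s_{\sigma^{-1}}\rangle$, so $\overline s_\sigma v\ne 0$ detects precisely a nonzero $\langle s_\sigma\rangle$-component of $v$. Evaluating $\overline s_\sigma\cdot ys_\sigma$ from the explicit formulas and showing the result is nonzero then forces $ys_\sigma\in\langle s_\sigma\rangle\subseteq\langle\S\rangle$ (combined with $ys_\sigma$ being a $C$-eigenvector), which contradicts the first observation.

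The hard step will be the scalar calculation. One must expand $\overline s_\sigma\cdot ys_\sigma$ using the $3\times 3$ matrix of $y$ on $\langle e_{n-2},e_{n-1},e_n\rangle$ (different for $p\ne 3$ and $p=3$) together with the contributions from $ye_{n-4}=e_{n-3}$ and $ye_{n-3}=e_{n-5}$. For $p\ne 3$ one simplifies via $c=-1$, $a^{q+1}=3$ and $a^3\gamma = a^6-10a^3+27$ coming from \eqref{cond2}; for $p=3$ one uses the formulas from \eqref{3}. In each case the pairing reduces to an algebraic expression in $a$ and $\sigma$ whose nonvanishing must be verified under the standing assumptions on $a$.
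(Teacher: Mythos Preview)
Your reduction to $[C,C^y]\ne 1$ via conjugation by $y$ is correct and is a neat simplification the paper does not make explicit. The geometric observation that $ys_\sigma$ has a nonzero $e_{n-6}$-coordinate, hence lies outside $\langle\S\rangle$ and so cannot be a $\sigma^{\pm 24}$-eigenvector of $C$, is also exactly how the paper begins. From that point on your route and the paper's diverge.

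The paper does not attempt to pin down a single nonzero pairing. Instead it uses the whole family $y^i s_{\sigma^{\pm 1}}$ for $i=1,2$: each lies outside $\langle\S\rangle$, hence is fixed by $C$, and therefore so are $y^i w$ and (via $C^x=C^{-1}$) $xy^i w$, with $w$ as in \eqref{w}. Subtracting off the $e_{n-5},e_{n-6}$ components yields four explicit vectors $v_1,\dots,v_4\in\langle\S\rangle$ fixed by $C$; a $4\times 4$ minor is shown to be nonzero (two exceptional polynomial conditions on $a$ are eliminated using the standing hypotheses, including the special choice of $a$ at $q=9$), and together with the vector $v$ of \eqref{v} this gives five independent fixed vectors inside the six-dimensional space $\langle\S\rangle$, contradicting the Jordan form \eqref{Jordan}.

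Your duality idea is sound in principle: $\overline s_\sigma$ does annihilate $\ker(C-I)\oplus\langle s_{\sigma^{-1}}\rangle$ because $\sigma^{48}\ne 1$ under the hypotheses of Section~\ref{non 3}, so establishing $\overline s_\sigma\cdot ys_\sigma\ne 0$ would indeed produce the contradiction. But you have not carried out this computation, and it is the entire content of the lemma. For $p\ne 3$ (with $c=-1$, $a^{q+1}=3$) the pairing mixes the $3\times 3$ block of $y$ applied to the last three coordinates of $s_\sigma$ with the contribution $\sigma^{-1}(\gamma+1)b^{-q}$ coming from position $n-3$; for $p=3$ one gets a different expression. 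In each case you must show the result is nonzero for \emph{every} $a$ satisfying the standing assumptions, which is not automatic and may introduce new exceptional polynomials in $a$ that then have to be excluded --- precisely the kind of case analysis the paper carries out for its determinant. Until that verification is actually done, the argument has a genuine gap.
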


\begin{proof}
If the claim is false, then $C^{y^i}$ fixes $\langle s_\sigma\rangle$
and $\langle s_{\sigma^{-1}}\rangle$, namely $y^is_{\sigma}$ and $y^is_{\sigma^{-1}}$ are eigenvectors of $C$ for $i=1,2$. Note that $ys_{\sigma^{\pm 1}}=(\zeri{n-9}, 0, 0, 1, \ast)$ and 
$y^2 s_{\sigma^{\pm 1}}=(\zeri{n-9}, 1, 0, 0, \ast)$ does not belong to $\langle \S\rangle$.  It follows that $C(y^i s_{\sigma^{\pm 1}}) =y^i s_{\sigma^{\pm 1}}$. From here we get $C(y^i w)=y^i w$, $i=1,2$, with $w$ as in \eqref{w}.
Using $C^x=C^{-1}$ we deduce $C(xy^i w)=xy^i w$.
As $C$ fixes $e_{n-5}$ and $e_{n-6}$, it also fixes the vectors 
$v_1 = yw -e_{n-5}$, $v_2=y^2w$, $v_3=xyw-e_{n-6}$ and $v_4=xy^2w$, where, setting
$\beta=(a-1)(a+1)^2$:
$$\begin{array}{rcl}
 (a^3-6) v_1 &= & (\zeri{n-5}, 0, 0, -a(\gamma+1), a^3-3, a^2)^T,\\
a(a^3-6) v_2 &= & (\zeri{n-5}, a^4-6a, 0, a^6-8a^3+18 , 0, a^4-3a)^T,\\
(a^3-6) v_3 & =& (\zeri{n-5},0, -a(\gamma+1), 0, 3, a^2)^T, \\
a(a^3-6) v_4 &= & (\zeri{n-5}, a^4-6a, a^6-8a^3+18, 0, a^5-3a^2, 
a^4-3a) 
\end{array}$$
if $p\neq 3$ and
$$\begin{array}{rcl}
(a^2-1)v_1 & = & (\zeri{n-5}, 0,0,a^2+1, a^2-a,-a)^T,\\
\beta v_2 & = & (\zeri{n-5}, \beta, 0, a+1 , a^2 (a-1) , - a(a+1))^T,\\
(a^2-1) v_3 & =&  (\zeri{n-5}, 0, a^2+1, 0, -a, a^2-a)^T,\\
\beta v_4 & =&  (\zeri{n-5}, \beta, a+1, 0, -a(a+1), a^2(a-1))^T
\end{array}$$
if $p=3$.  Clearly $\mathcal{S}_0=\langle v_i\mid 1\leq i\leq 4\rangle$ is contained in $\langle \S\rangle$.
Considering coordinates of the $v_i$'s of position $n-4,n-3,n-1,n$, we see that these vectors are linearly independent
unless $p\geq 3$ and $2a^6 - 12a^3 + 27=0$ or $p=3$ and $a^4 + a^2 -a + 1=0$.
In the first case, raising to the $q$, one easily gets
$a^3=\frac{27}{4}=a^{3q}=\frac{27}{a^3}=4$, in contradiction with the hypothesis $\F_p[a^3]=\F_{q^2}$.
In the second case, one gets $q=9$, but for such value  of $q$ our $a$ is a root of 
$t^4 - t^3 + t^2 + t - 1$.

Thus $\mathcal{S}_0$ has dimension $4$. The vector $v$ defined in \eqref{v}
is fixed by $C$. Clearly, $v \not \in \mathcal{S}_0$, as its coordinate of position $n-7$ is nonzero. It follows that $C_{|\langle \S\rangle}$ fixes 
pointwise a $5$-dimensional space, an absurd, since from \eqref{Jordan} we see
that $C_{|\langle \S\rangle}$ has Jordan form $\diag(\rho,\rho^{-1},\I_4)$,
where $\rho=\sigma^{24}\neq \pm 1$.
\end{proof}

\begin{theorem}\label{prim}
The group  $\langle x,y\rangle$ is primitive.
\end{theorem}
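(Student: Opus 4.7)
I argue by contradiction: suppose $\langle x,y\rangle$ preserves a non-trivial decomposition $V=V_1\oplus\cdots\oplus V_k$ with $k\geq 2$. By irreducibility (Theorem \ref{Hirr}), the blocks are permuted transitively, so all have the same dimension $d=n/k$. The plan is to derive a contradiction with Lemma \ref{Cy} via the bireflection $C=[x,y]^{24}$.

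First I localize the action of $C$ with respect to the decomposition. By \eqref{Jordan}, $\dim(C-I)V=2$, so whenever $C$ maps $V_i$ to a distinct block $V_j$, the map $v\mapsto(C-I)v=Cv-v$ on $V_i$ has $V_i$-component $-v$; it follows that $V_i\subseteq(C-I)V$ and hence $\dim V_i\leq 2$. Thus for $d\geq 3$, $C$ fixes every block setwise and the support $T=\{V_i:C_{|V_i}\neq\id\}$ has $|T|\leq 2$, since the codimensions of the pointwise-fixed subspaces of the individual blocks must sum to $2$. The cases $d\in\{1,2\}$ admit a parallel treatment in which $C$ may also swap a pair of blocks, but the nontrivial action of $C$ is still confined to at most two blocks.

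Next I examine the supports of $C^y$ and $C^{y^2}$, which are $y^{-1}T$ and $y^{-2}T$. Since $y^3=1$, its orbits on blocks have length $1$ or $3$. In the generic configuration, where every block of $T$ lies in a distinct length-$3$ $y$-orbit, the three sets $T$, $y^{-1}T$, $y^{-2}T$ are pairwise disjoint; then $C$, $C^y$, $C^{y^2}$ have pairwise disjoint supports and hence pairwise commute, so $C$ centralizes $\langle C^y,C^{y^2}\rangle$, contradicting Lemma \ref{Cy}.

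The principal obstacle is the residual situations in which these supports overlap, namely when $y$ fixes some block of $T$ setwise, or when $|T|=2$ with both blocks lying in a common length-$3$ $y$-orbit. For these I would use the explicit eigenvectors $s_{\sigma^{\pm 1}}$ of $C_{|\langle\S\rangle}$ from Section \ref{action}: a $y$-invariant block containing $s_\sigma$ must also contain $xs_\sigma=s_{\sigma^{-1}}$ and therefore the vector $w$ of \eqref{w}; closing under $x$ and $y$ exactly as in the proof of Theorem \ref{Hirr} then produces $e_{n-1}$ or $e_n$, and Lemma \ref{ei} forces the block to equal $V$, contradicting $k\geq 2$. The overlapping-support case with $|T|=2$ inside one length-$3$ $y$-orbit is reduced to the previous by passing to the $y$-invariant subspace $V_i\oplus V_j$ and running the same eigenvector-containment argument there.
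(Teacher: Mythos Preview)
Your overall strategy matches the paper's: exploit that $C=[x,y]^{24}$ is a bireflection and derive a contradiction with Lemma~\ref{Cy}. The generic-case argument (pairwise disjoint supports of $C$, $C^y$, $C^{y^2}$ forcing commutativity) is correct and is exactly what the paper does. However, two parts of your proposal contain genuine gaps.

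\textbf{The case $d\le 2$.} Your assertion that ``the nontrivial action of $C$ is still confined to at most two blocks'' is unjustified and, a~priori, false. When $d=1$ and $p\ne 3$, nothing you have written excludes $C$ acting as a $3$-cycle on blocks $V_1,V_2,V_3$: then $C_{|V_1\oplus V_2\oplus V_3}$ has characteristic polynomial $t^3-\lambda$ with three distinct roots, and this is compatible with the spectrum $\{\rho,\rho^{-1},1\}$ of $C$ precisely when $\rho=\omega^{\pm1}$. The paper treats this explicitly (its Case~2, $s=3$): from $\sigma^{24}=\omega^{\pm1}$ one deduces that $9$ divides the order of $[x,y]$, and then $[x,y]^8$ would have an orbit of length $9$ on blocks, forcing $C=([x,y]^8)^3$ to have three $3$-cycles rather than one, a contradiction. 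Some argument of this kind is required; your sentence does not supply it. (For $s=2$ one also needs the parity observation that $\det C=1$ forces $C$ to induce an even permutation on blocks.)

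\textbf{The residual situations.} The claim that ``a $y$-invariant block containing $s_\sigma$ must also contain $xs_\sigma=s_{\sigma^{-1}}$'' is unsupported: $y$-invariance of a block says nothing about $x$. What is true is that if $|T|=1$ then both $s_\sigma$ and $s_{\sigma^{-1}}$ lie in the unique block of $T$, whence that block is $x$-invariant and hence $\langle x,y\rangle$-invariant, contradicting irreducibility. But when $|T|=2$, say $s_\sigma\in V_i$ and $s_{\sigma^{-1}}\in V_j$, and $yV_i=V_i$, one has $xV_i=V_j$; the paper then uses $[x,y]V_j=V_j$ to deduce $yV_j=V_j$, forcing $k=2$ and $x$ conjugate to $\left(\begin{smallmatrix}0&\I\\ \I&0\end{smallmatrix}\right)$, which contradicts the similarity invariants \eqref{sim_x}. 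You omit this step entirely. Finally, your reduction for ``$|T|=2$ inside one length-$3$ $y$-orbit'' is incorrect: if $V_i,V_j$ lie in a $y$-orbit $\{V_i,V_j,V_m\}$ of length~$3$, then $V_i\oplus V_j$ is \emph{not} $y$-invariant, so you cannot ``pass to the $y$-invariant subspace $V_i\oplus V_j$''. The paper instead shows directly that $yV_i=V_j$ would force $yV_j=V_i$ (via $[x,y]V_i=V_i$ and $xV_i=V_j$), contradicting $y^3=1$; hence $V_i$ and $V_j$ must lie in distinct $y$-orbits of length~$3$, and one is back in the generic disjoint-support case.
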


\begin{proof}
By contradiction let 
$\F^n=T_1\oplus \dots \oplus T_\ell$, $\ell \ge 2$,
be a decomposition  preserved by $\langle x,y\rangle$.
By the irreducibility of $\langle x,y\rangle$ (Theorem \ref{Hirr}), the $T_j$'s must be permuted transitively, so their common dimension is $k$, say.
Recall that $C$ has Jordan form $\diag(\rho, \rho^{-1}, \I_{n-2})$, where $\rho=\sigma^{24}\neq \pm 1$.\\
\noindent \textbf{Case 1.}  $CT_j=T_j$  for all $1\leq j\leq \ell$. \\
In this case we may assume $s_{\sigma} \in T_1$ and either (\emph{i}) $s_{\sigma^{-1}} \in T_1$ or (\emph{ii})
$s_{\sigma^{-1}}\in T_2\neq T_1$.
If (\emph{i}) holds, from $xs_\sigma=s_{\sigma^{-1}}$, we have $xT_1=T_1$.
Hence, by the transitivity,  $T_1\ne yT_1=T_2$, say, and $yT_2=T_3$. 
The restriction of $y$ to $T_1\oplus T_2\oplus T_3$, for an appropriate basis, 
has shape 
$\left(\begin{smallmatrix}
0&0&\I_k\\
\I_k&0&0\\
0&\I_k&0
\end{smallmatrix}\right)$. 
Since $C_{|T_j}=\id$ for the $T_j$'s in the remaining orbits of $y$ (if any), $C^y$ and $C^{y^2}$ commute with $C$, against  Lemma \ref{Cy}.

If (\emph{ii}) holds then $[x,y]T_j=T_j$ for $j=1,2$ and,  from $xs_\sigma=s_{\sigma^{-1}}$ we have $xT_1=T_2$.  If  
$y$ fixes  $T_1$ or $T_2$, then it fixes both.
This gives $\ell =2$, 
$x$ conjugate to 
$\left(\begin{smallmatrix}
0&\I_k\\
\I_k&0
\end{smallmatrix}\right)$, in contrast with the similarity invariants \eqref{sim_x} of $x$. So $T_1$ and $T_2$
lie in orbits of length $3$. They cannot be the same: indeed from $yT_1=T_2$, say,  we get the contradiction $yT_2=T_1$.
We conclude that $T_1$ and $T_2$ are in different orbits of $y$ of length $3$. Again $C^y$ and $C^{y^2}$ commute with $C$, the same contradiction as above.\\
\noindent \textbf{Case 2.} $CT_1\neq T_1$.\\
Let $\Omega$ be the orbit of $T_1$ under $C$. The sum $W$ of the $T_j\in \Omega$ is a $C$-invariant subspace.
Setting $s=|\Omega|>1$, we have $C^sT_1=T_1$. Since $C^s$ is semisimple, $T_1$ has a basis $v_1,\ldots,v_k$ consisting of eigenvectors of $C^s$.
From $C^s v_i = \lambda_i v_i$, $1\leq i \leq k$, it follows that $C_{|W}$ has characteristic polynomial
$$\chi_W(t)=\prod_{i=1}^k (t^s-\lambda_i).$$
Now, $p$ cannot divide $s$, as it does not divide the order of $C$. Thus, $\gcd(p,s)=1$ and $t^s-\lambda_i$ has $s$ distinct roots.
If $\chi_W(t)$ does not have the roots $\rho^e$ for some $e=\pm 1$, it can only have the root $1$.
This gives $s=1$, a contradiction. So we may assume that $\rho^e$ is a root of $\chi_W(t)$ and $\lambda_1=\rho^{es}$.
Since $C^s$ has at most $3$ distinct eigenvalues, we have $s=2,3$.
If $s=2$, then $C$ acts as a $2$-cycle on the $T_j$'s, a contradiction since $C$ must act as an even permutation.
If $s=3$, then $\gcd(p,s)=1$ gives $p\neq 3$ and $\chi_W(t)$ has roots $\rho^e, \omega \rho^e, \omega^2 \rho^e$.
This implies that $\rho^e=\omega^{\pm 1}$, whence the order of $[x,y]$ is divisible by $9$.
Thus $\tau=[x,y]^8$ has order $9$. Since $C=\tau^3$ does not fix all $T_j$'s, it follows that $\tau$ has an orbit of length $9$ and 
$C$ has $3$ orbits of length $3$, a contradiction.
\end{proof}

\begin{lemma}\label{tens}
The group $\langle x,y\rangle$ is tensor-indecomposable.
\end{lemma}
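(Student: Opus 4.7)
The strategy is to exploit the explicit bireflection $C = [x,y]^{24}$ from Section \ref{action}, whose Jordan form \eqref{Jordan} consists of two simple eigenvalues $\rho^{\pm 1}$ together with $n-2$ fixed eigenvalues, and to show that this spectrum is incompatible with any tensor factorisation of $V$ of the form $V_1 \otimes V_2$ with $\dim V_i \geq 2$.

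First I would assume for contradiction that such a decomposition is preserved by $\langle x,y\rangle$. Then $\langle x,y\rangle$ lies in the normaliser of $\GL(V_1)\otimes\GL(V_2)$ in $\GL(V)$, yielding a homomorphism $\phi\colon \langle x,y\rangle \to S_2$ that records whether an element swaps the two tensor factors; its kernel $H_0$ has index at most $2$ and consists of operators of the form $A_1 \otimes A_2$ (the usual central scalar being absorbed into the factors). Since $S_2$ is abelian, the commutator $[x,y]$ lies in $H_0$ automatically, and hence so does $C$. Therefore $C = A \otimes B$ for some $A \in \GL(V_1)$ and $B \in \GL(V_2)$.

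Next, Lemma \ref{poli} (for $p\neq 3$) and Proposition \ref{policar3} (for $p=3$) guarantee that the order of $\sigma$ does not divide $48$, whence $\rho = \sigma^{24} \notin \{\pm 1\}$: indeed $\rho^{2}=1$ would force $\sigma^{48}=1$. Since $C$ is diagonalisable and invertible, the standard fact that $A \otimes B$ is diagonalisable only when both factors are allows me to choose bases in which $A = \diag(a_1,\ldots,a_{n_1})$ and $B = \diag(b_1,\ldots,b_{n_2})$. The eigenvalues of $C$ are then precisely the $n_1 n_2$ products $a_i b_j$, of which exactly two are non-trivial, namely $\rho$ and $\rho^{-1}$.

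A brief case analysis on the positions of the two non-trivial pairs $(i,j)$ then finishes the proof. If they lie in a common row or a common column, cancelling the shared factor forces $\rho = \rho^{-1}$; if they lie in distinct rows and columns, any third row or column (available since $n_1 n_2 \geq n \geq 9$ rules out $n_1 = n_2 = 2$) produces a coincidence $a_i = a_j$ or $b_i = b_j$ that collapses $\rho$ to $1$. Both possibilities contradict $\rho \notin \{\pm 1\}$. The only delicate point in the whole argument is ensuring that $C$ itself lies in $H_0$ rather than merely in the full normaliser, and this is exactly what the commutator structure $C = [x,y]^{24}$ grants for free.
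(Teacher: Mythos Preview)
Your argument is correct and follows essentially the same route as the paper: both proofs write $C = A \otimes B$, diagonalise, and observe that the eigenvalue pattern $\{\rho,\rho^{-1},1,\ldots,1\}$ forces $d_1 = d_2 = 2$, contradicting $n \geq 9$. The paper compresses your case analysis into a single line (``the only possibility is $A = \diag(\rho,1)$ and $B = \diag(1,\rho^{-1})$''), and it omits your discussion of the swap homomorphism $\phi\colon \langle x,y\rangle \to S_2$, simply asserting $C = A \otimes B$ directly; your extra care there is harmless but not needed for the lemma as stated.
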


\begin{proof}
Suppose that $V=V_1\otimes V_2$ is a tensor decomposition for the $\langle x,y\rangle$-module $V=\F^n$, where
$1<d_1=\dim V_1\leq d_2=\dim V_2$.
We can write $C=A\otimes B$, where we may assume
$A=\diag(\rho,\alpha_2,\ldots,\alpha_{d_1})$ and 
$B=\diag(1,\beta_2,\ldots,\beta_{d_2})$, as $C$ is diagonalizable.
Since the fixed points space of $C$ has dimension $n-2$, the only possibility is $A=\diag(\rho,1)$ and
$B=\diag(1,\rho^{-1})$, which is impossible since $n=d_1 d_2\geq 9$.
\end{proof}

\begin{proposition}\label{no C5}
The group $\langle x, y\rangle$ is not contained in any maximal subgroup $M$ of class $\Cl_5$ of $\U_n(q^2)$.
\end{proposition}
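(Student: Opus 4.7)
The plan is to invoke Lemma~\ref{tracce}, which guarantees that $a$ lies in the subfield of $\F_{q^2}$ generated over $\F_p$ by the traces of the elements of $H$. By the Aschbacher--Kleidman--Liebeck description, every maximal $\mathcal{C}_5$-subgroup $M$ of $\U_n(q^2)$ is the $\U_n(q^2)$-stabilizer of a classical group defined over a proper subfield of $\F_{q^2}$. Up to conjugacy, the possibilities are the unitary subfield group $\U_n(q_0^2)$ with $q = q_0^r$ for $r$ an odd prime (so $q_1 := q_0^2$), and the symplectic or orthogonal subgroups $\Sp_n(q)$ and $\mathrm{O}_n^\varepsilon(q)$ (so $q_1 := q$). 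In each case, after conjugation inside $\GL_n(q^2)$, $M$ is contained in $\GL_n(q_1)$ for a subfield $\F_{q_1} \subsetneq \F_{q^2}$, so all traces of elements of $M$ lie in $\F_{q_1}$.

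If $H$ were contained in a conjugate of some such $M$, the trace field of $H$ would then be contained in $\F_{q_1}$, and Lemma~\ref{tracce} would give $a \in \F_{q_1}$. Combining with the standing assumption $\F_p[a^3] = \F_{q^2}$ (or $\F_p[a] = \F_{q^2}$ when $p = 3$), this forces $\F_{q^2} \subseteq \F_{q_1}$, contradicting $\F_{q_1} \subsetneq \F_{q^2}$.

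The only subsidiary task is to verify the hypothesis $b + ac \neq 0$ of Lemma~\ref{tracce} when $p \neq 3$. A direct substitution using $a^{q+1} = 3$ from~\eqref{cond2} gives $c = -1$ and $a^{2q} = 9/a^2$, and rewrites $b + ac$ as $(a^3 - 9)/a^2$; this cannot vanish, for otherwise $a^3 = 9 \in \F_p$ would yield $\F_{q^2} = \F_p[a^3] \subseteq \F_p$, absurd. The case $p = 3$ requires no extra hypothesis. I do not expect any substantive obstacle: the entire argument reduces to consulting the $\mathcal{C}_5$ classification and applying Lemma~\ref{tracce}.
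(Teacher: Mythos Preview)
Your argument has a genuine gap at the step ``after conjugation inside $\GL_n(q^2)$, $M$ is contained in $\GL_n(q_1)$''. The Aschbacher class $\mathcal{C}_5$ subgroups of $\U_n(q^2)$ are \emph{normalizers}, and they always contain the full centre of $\U_n(q^2)$, that is, the scalars $\lambda\,\I_n$ with $\lambda^{q+1}=1$. These scalars lie in $\F_{q^2}^*$ but in general not in the proper subfield $\F_{q_1}$. So, up to conjugacy, one only has $M \subseteq \GL_n(q_1)\cdot(\F_{q^2}^*\,\I_n)$, and an element $h\in H$ is written $h^g=\vartheta\,h_0$ with $h_0\in\GL_n(q_1)$ and $\vartheta\in\F_{q^2}^*$ an uncontrolled scalar. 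Then $\tr(h)=\vartheta\,\tr(h_0)$ need not lie in $\F_{q_1}$, and Lemma~\ref{tracce} --- whose conclusion is precisely that $H$ is not conjugate into $\GL_n(q_0)$, \emph{without} scalars --- does not by itself exclude containment in such an $M$.

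The paper handles exactly this point. Writing $x^g=\vartheta_1 x_0$ and $y^g=\vartheta_2 y_0$ with $x_0,y_0\in\GL_n(q_0)$, it uses the similarity invariants of $x$ and $y$ (namely that $x$ has the invariant factor $t-1$, since $m\ge 3$, and $y$ has the invariant factor $t^3-1$) to force $\vartheta_1\in\F_{q_0}$ and $\vartheta_2^{\,3}\in\F_{q_0}$. Feeding these constraints into the trace identity of Lemma~\ref{tracce} one obtains $a=a_0\vartheta_2$ with $a_0\in\F_{q_0}$, hence $a^3\in\F_{q_0}$, and the contradiction $\F_{q^2}=\F_p[a^3]\subseteq\F_{q_0}$ follows. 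Your verification that $b+ac=(a^3-9)/a^2\ne 0$ is correct and is indeed needed, but the scalar bookkeeping via the similarity invariants cannot be skipped.
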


\begin{proof}
Suppose, by contradiction, that there exists $g \in \U_n(q^2)$ such that 
$$\langle x, y \rangle^g\leq  \GL_n(q_0)\left( \F_{q^2}^* \I_n\right) \leq \SU_n(q^2),$$
where $\F_{q_0}$ is a proper subfield of $\F_{q^2}$. Set
$$x^g = \vartheta_1 x_0,\quad y^g=\vartheta_2 y_0 , \quad x_0,y_0 \in \GL_n(q_0), \quad \vartheta_i \in \F_{q^2}^*.$$
Recall that we are assuming $n\geq 9$, hence $m\geq 3$.
So,  $x$ has the similarity invariant $t-1$, see \eqref{sim_x}.
It follows that $x_0$ must have the similarity invariant $t-\vartheta_1^{-1}$, whence $\vartheta_1 
\in 
\F_{q_0}$. Similarly, $y$ has the similarity invariant $t^3-1$  by \eqref{sim_y}, hence 
$\vartheta_2^3\in \F_{q_0}$.
Now, the relations 
$\tr(xy)+\tr((yxy)^2)-\frac{\tr((yxy)^3)}{\tr(yxy)}=a$  for $p\neq 3$ and
$\tr(xy)=a$ for $p=3$ (see Lemma \ref{tracce}) give
$a=a_0\vartheta_2$ with $a_0\in \F_{q_0}$, whence $a^3 \in \F_{q_0}$.
Thus $\F_{q^2}=\F_p[a^3]\leq \F_{q_0}$ gives $q_0=q^2$ in contrast with the assumption that
$\F_{q_0}$ is a proper subfield of $\F_{q^2}$.
\end{proof}

\begin{theorem}\label{p<>3}
Suppose  $q\not \in \{2,3,5,7,8,11  \}$ and $n=9,10$ or $n\geq 12$.
Let $a \in \F_{q^2}$ be an element satisfying assumption \eqref{cond2} if $p\neq 3$,  
assumption \eqref{3} if $p=3$, with the additional condition $a^4 -a^3 + a^2 + a-1=0$ when $q=9$.
Then $H=\SU_n(q^2)$.
\end{theorem}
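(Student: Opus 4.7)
The plan is to apply the Guralnick--Saxl classification (Theorem \ref{GS8}) to the group $\langle x,y\rangle$ acting on $V=\F^n$. The analysis of Section \ref{action} shows that $C=[x,y]^{24}$ has Jordan form $\diag(\rho,\rho^{-1},\I_{n-2})$ with $\rho=\sigma^{24}\neq\pm 1$, so $C$ is a bireflection; in particular $\nu_{\langle x,y\rangle}(V)\le\dim(C-\I_n)V=2\le\max\{2,\sqrt{n}/2\}$. The remaining hypotheses---absolute irreducibility, primitivity, and tensor-indecomposability---are supplied by Theorem \ref{Hirr}, Theorem \ref{prim}, and Lemma \ref{tens} respectively. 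Thus one of the cases (a), (b), (c) of Theorem \ref{GS8} must hold.

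Next I would rule out cases (b) and (c). Case (c) applies only when $n=10$ and $p\neq 2$, and forces $\langle x,y\rangle$ to normalise $\PSU_5(4)$; its automorphism group has order divisible only by the primes $2,3,5,11,13$, but Lemma \ref{poli} together with hypothesis \eqref{cond2} guarantees an eigenvalue $\sigma$ of $[x,y]$ whose multiplicative order, under the running assumption $q\notin\{2,3,5,7,8,11\}$, cannot be supported on these primes alone. For case (b), the image of $\langle x,y\rangle$ modulo scalars would embed in $\Sym(c)$ with $c\le n+2$, forcing each eigenvalue of $[x,y]$ to be a root of unity of order at most the Landau number $g(n+2)$; but the relation $\sigma^2+\gamma\sigma+1=0$ combined with the condition $\F_p[a^3]=\F_{q^2}$ in \eqref{cond2} compels $\sigma$ to have order well beyond $g(n+2)$ for each admissible pair $(n,q)$.

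This leaves case (a): $\langle x,y\rangle$ contains a finite classical simple group $G_0$ in its natural representation on $V$. Since $\langle x,y\rangle\le\U_n(q^2)$ preserves the non-degenerate unitary form $J_n$, so does $G_0$; hence either $G_0=\SU_n(q^2)$, or $G_0$ lies in a $\Cl_5$-maximal subgroup of $\U_n(q^2)$ (either a subfield unitary group $\SU_n(q_0^2)$ with $q_0\mid q$ and $q_0<q$, or one of the subfield classical subgroups $\SL_n(q_0)$, $\Sp_n(q_0)$, $\Omega_n^{\pm}(q_0)$ embedded via $\GL_n(q_0)\le\SU_n(q^2)$). Proposition \ref{no C5} excludes the latter, so $G_0=\SU_n(q^2)$, and consequently $H=\langle(-1)^n x,y\rangle=\SU_n(q^2)$ by the projective-lift remark preceding Lemma \ref{tracce}. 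The principal obstacle is a uniform quantitative elimination of case (b): one must produce a lower bound on the multiplicative order of $\sigma$ valid simultaneously for every admissible $(n,q)$, strong enough to defeat the Landau bound $g(n+2)$.
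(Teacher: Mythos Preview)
Your overall architecture---apply Theorem \ref{GS8} after verifying irreducibility, primitivity and tensor-indecomposability, then eliminate (b), (c) and the non-unitary part of (a)---matches the paper's, and your treatment of case (a) via Proposition \ref{no C5} is essentially what the paper does. The divergence, and the real problem, is in how you handle (b) and (c).

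Your proposed elimination of (b) cannot be made to work. The multiplicative order of $\sigma$ is governed by $q$ alone (it divides $q^2-1$, or $q^2+1$, etc.), while the Landau number $g(n+2)$ grows without bound in $n$. For any fixed admissible $q$ there will be infinitely many $n$ for which $g(n+2)$ exceeds the order of $\sigma$, so no inequality of the type you describe can hold uniformly. You flag this yourself as ``the principal obstacle,'' but it is not a loose end to be tightened; it is a structural failure of the approach. Your argument for (c) is also incomplete: knowing only that the order of $\sigma$ does not divide $48$ does not prevent it from being, say, $5$, $11$, or $55$, all of which are supported on the primes of $|\mathrm{Aut}(\PSU_5(4))|$; and you have not accounted for the scalars in the normaliser.

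The paper sidesteps all of this with a single observation: in both cases (b) and (c) the group preserves a non-degenerate bilinear form defined over $\F_q$. The deleted permutation module for $\Alt(c)$ or $\Sym(c)$ carries the restriction of the standard inner product on the full permutation module, and the $10$-dimensional representation of $\PSU_5(4)$ (for $p\neq 2$) is likewise self-dual over $\F_q$; the paper cites \cite[pp.~409--417]{Ho} and \cite[pp.~186--187]{KL} for this. Hence in cases (b) and (c) the projective image of $\langle x,y\rangle$ lies inside a symplectic or orthogonal group over $\F_q$, which is a $\Cl_5$-subgroup of $\U_n(q^2)$, and Proposition \ref{no C5} disposes of it exactly as in case (a). So one lemma (Proposition \ref{no C5}) handles all three cases at once, with no order estimates required.
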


\begin{proof}
The existence of $a$ satisfying \eqref{cond2} or \eqref{3} is guaranteed by 
Propositions \ref{car ne 3} and \ref{policar3}, respectively.
In virtue of these hypothesis,   the group $\langle x,y\rangle$ is absolutely 
irreducible and primitive  by Theorems \ref{Hirr} and \ref{prim}.
Furthermore,  $\langle x,y\rangle$ is also tensor-indecomposable by Lemma \ref{tens}.
Recall that $C=[x,y]^{24}$ is a bireflection: one of the cases (a), (b) or (c) of Theorem  \ref{GS8}  holds for $\langle x,y\rangle$.
If cases (b) or (c) hold, then the projective image of $\langle x,y\rangle$ is contained in a symplectic or in an orthogonal group 
defined over $\F_q$ (see \cite[pp. 409--417]{Ho} and \cite[pp. 186--187]{KL}).
The statement follows now from Proposition \ref{no C5}.
\end{proof}

\section{The case $q=2$, $n\geq 8$ and other cases for $q$ small }\label{eccez}

We start with a general result, used below in a special case. 
For a subset $B$  of the canonical basis $\C$ such that $J_{|B}$ is non-degenerate, we denote by $S_B$ the group acting 
on $\langle 
B\rangle$ as $\SU_{|B|}(q^2)$, with respect to $J_{|B}$, and as the identity on $\langle \C\setminus B\rangle$.

\begin{theorem} \label{Sb}
If $H$ contains $S_{B_\ell}$ with $B_\ell=\{e_{j}\mid \ell\le j\le n\}$ for some $\ell\leq n-4$, then 
$H=\SU_n(q^2)$.
\end{theorem}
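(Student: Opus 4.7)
The plan is to argue by reverse induction on $k$, showing that $H \supseteq S_{B_k}$ for every $1 \le k \le \ell$. The base case $k=\ell$ is the hypothesis; the conclusion $k=1$ combined with $H \le \SU_n(q^2)$ yields $H = S_{B_1} = \SU_n(q^2)$.

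For the inductive step, assume $H \supseteq S_{B_{k+1}}$ with $1 \le k \le \ell-1$. Because $k \le \ell-1 \le n-5$, the index $k$ lies well inside the ``permutation part'' of the basis, where $y$ acts as the product of $3$-cycles $(e_{3j+1+r},e_{3j+2+r},e_{3j+3+r})$ and $x$ swaps $e_{3j+3+r}$ with $e_{3j+4+r}$. A case analysis on the residue of $k+1$ modulo $3$ produces an element $g \in \{x, y, y^{-1}\}$ such that $g(e_{k+1}) = e_k$: namely $g = y^{-1}$ works when $k+1$ is in the ``middle'' or ``end'' position of a triple, and $g = x$ works when $k+1$ is at the ``start'' of a triple. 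Conjugation gives $gS_{B_{k+1}}g^{-1} \le H$, acting as $\SU_{n-k}(q^2)$ on the subspace $g\langle B_{k+1}\rangle$, which contains $e_k$.

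The two subgroups $S_{B_{k+1}}$ and $gS_{B_{k+1}}g^{-1}$ preserve $(n-k)$-dimensional non-degenerate subspaces of $\langle B_k\rangle$, and a direct check of the action of $g$ on the basis (using that $g$ is unitary with respect to $J_n$ and that $J_n$ is scalar on the permutation part) shows that these subspaces intersect in a non-degenerate hyperplane of dimension $n-k-1$ containing $\langle e_{n-2},e_{n-1},e_n\rangle$, on which both groups restrict to the same copy of $\SU_{n-k-1}(q^2)$ determined by $J_n$. Applying the classical amalgamation principle---two copies of $\SU_m(q^2)$ acting on distinct non-degenerate hyperplanes of a unitary space of dimension $m+1$ and agreeing on their intersection generate the full $\SU_{m+1}(q^2)$, which is valid here since $m = n-k-1 \ge 4$ avoids the small-rank exceptions---we conclude $\langle S_{B_{k+1}},\, gS_{B_{k+1}}g^{-1}\rangle = S_{B_k}$, completing the induction.

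The main obstacle I expect is verifying the choice of $g$ in the few boundary cases where the action of $x$ differs from its ``generic'' permutation form, in particular near the first basis vectors where the detailed action depends on $r = n \bmod 3$ and on whether $q=2$, and in the transition region around $e_{n-7}, e_{n-4}$. Because the hypothesis $\ell \le n-4$ guarantees all indices involved lie inside the regular permutation part and the hyperplane of intersection always contains $\langle e_{n-2},e_{n-1},e_n\rangle$, the amalgamation setup is unaffected by these boundary subtleties, so the case analysis remains routine.
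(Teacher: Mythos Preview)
Your approach is essentially the same as the paper's: descending induction, conjugating $S_{B_{k+1}}$ by a generator to produce a second copy of $\SU_{n-k}(q^2)$ inside $S_{B_k}$, and then using that the point stabiliser of $e_k$ is the unique maximal overgroup of $S_{B_{k+1}}$ in $S_{B_k}$. Your ``amalgamation principle'' is exactly this maximal-subgroup statement.

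There is, however, a genuine gap in your handling of the boundary. Your dismissal rests on the claim that ``the hypothesis $\ell\le n-4$ guarantees all indices involved lie inside the regular permutation part''. But $\ell$ is the \emph{top} of the induction; the problematic boundary is at the \emph{bottom}, where the induction must reach $k=1$. Concretely, when $r=2$ (so $n\equiv 2\pmod 3$) and $k\in\{1,2\}$, no element $g\in\{x,y,y^{-1}\}$ satisfies $g(e_{k+1})=e_k$: for $r=2$ the element $y$ fixes both $e_1$ and $e_2$, while $x$ swaps $e_1\leftrightarrow e_3$ and $e_2\leftrightarrow e_4$, so $e_3\mapsto e_2$ and $e_2\mapsto e_1$ are simply not realised by any single generator. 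Your inductive step therefore fails precisely at these two values of $k$.

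The paper meets this obstruction head-on with a separate argument: conjugating $S_{B_4}$ and $S_{B_3}$ by $\hat x=(-1)^n x$ yields $S_{B_4\cup\{e_2\}}$ and $S_{B_3\cup\{e_1\}}$ respectively (because $\hat x$ carries $e_4\mapsto e_2$ and $e_3\mapsto e_1$), and the four groups together generate $\SU_n(q^2)$ without ever producing $S_{B_2}$ as an intermediate step. You need to insert an analogous special argument for $r=2$; once that is done, your proof coincides with the paper's.
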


\begin{proof} 
Let $k\leq \ell$ be  the smallest positive integer for which $H\ge S_{B_k}$. So, we need to show that $k=1$.
Assume, by contradiction, $k>1$.
Consider first the cases $r=0,1$ or $r=2$ and $k\geq 4$.
Then either $e_{k-1} \in \hat x B_k$ or $e_{k-1} \in y^2 B_k$, where $\hat x=(-1)^n x$.
Setting $g=\hat x$ or $g=y^2$, accordingly, we have 
$$(S_{B_k})^g=S_{\overline B}, \quad \textrm{ with }\quad  \overline B=\left(B_k\setminus
\left\{e_{k}\right\}\right)\cup \left\{e_{k-1}\right\}.$$
Indeed, $(S_{B_k})^g$ fixes $J_{|\overline B}$ and every vector of $\C\setminus \overline B$. Then $(S_{B_k})^g\leq
S_{\overline B}$. Since $|S_{B_k}|=|S_{\overline B}|$, we have the equality.
It follows 
$\left\langle S_{B_k} , (S_{B_k})^g\right\rangle \le S_{B_{k-1}}$. Moreover, as the unique maximal subgroup of
$S_{B_{k-1}}$
which contains $S_{B_k}$ is the stabilizer of  $ e_{k-1}$ we get 
$\left\langle S_{B_k} , (S_{B_k})^g\right\rangle = S_{B_{k-1}}$, 
against the minimality of $k$. 

Finally, when $r=2$ and $k=2,3$, we have 
$$\left\langle S_{B_4}, (S_{B_4})^{\hat x}\right\rangle =S_{{B_4}\cup \left\{e_2\right\}}\equad \left\langle S_{B_3}, 
(S_{B_3})^{\hat x}\right\rangle =S_{{B_3}\cup \left\{e_1\right\}}.$$ 
It follows that
$\left\langle S_{B_4}, (S_{B_4})^{\hat x}, S_{B_3}, (S_{B_3})^{\hat x}\right\rangle =\SU_n(q^2)$. 
\end{proof}
 
In this section we assume that $q=2$ and $n\geq 8$ or $q=3,5,8$ and $n\neq 11$ or $q=7,11$ and $n\neq 8,11$.
Consider the decomposition $V=T_1\oplus T_2$ where 
$$T_1=\langle e_i\mid 1\leq i\leq n-9\rangle \equad T_2=\langle e_i\mid n-8\leq i\leq  n\rangle .$$
When $q \in \{3,5,7,8,11\}$, set $K=\langle \zeta, y\rangle$, with $\zeta=[x,y]^{3}$. By the analysis of $[x,y]$ in Section \ref{gene},
the subspaces $T_1$ and $T_2$ are $K$-invariant for $n=15,16$ and for all $n\geq 18$.
Moreover, $K_{|T_1}$ is contained in $\Sym(11)\times C_3$.
When $q=2$, set $K=\langle \zeta, y\rangle$ with $\zeta=[x,y]^{24}$:
if $n= 10, 14, 15, 16$ or $n\geq 18$ again $T_1$ and $T_2$ are $K$-invariant and 
$\zeta_{|T_1}=\I_{n-9}$.

\begin{proposition}\label{23578}
If $q>2$, let $a$ be an element of $\F_{q^2}$ whose minimal polynomial $m(t)$  over $\F_p$ is as  in {\rm Table 
\ref{tab57}}.
Then $H=\SU_n(q^2)$.
\end{proposition}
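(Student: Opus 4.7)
The plan is to exhibit, inside $H$, a copy of the group $S_{B_{n-8}}\cong\SU_9(q^2)$ acting as the pointwise stabilizer of $T_1$ in the unitary group, and then invoke Theorem \ref{Sb} to conclude that $H=\SU_n(q^2)$.

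The first step is a ``localization'' on $T_2$. Since $T_1$ and $T_2$ are $K$-invariant for all values of $n$ under consideration, and $K_{|T_1}$ embeds into $\Sym(11)\times C_3$, there is an integer $N$ depending only on $q$ (not on $n$) such that $g^N$ fixes $T_1$ pointwise for every $g\in K$. Let $L$ be the pointwise stabilizer of $T_1$ inside $K$; then $L\le H$, and since the form $J_n$ is block-diagonal with respect to the splitting $V=T_1\oplus T_2$, the restriction map yields $L_{|T_2}\le S_{B_{n-8}}\cong \SU_9(q^2)$.

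The second step is to establish the reverse inclusion $L_{|T_2}=S_{B_{n-8}}$. The key point is that the matrices of $\zeta$ and $y$ restricted to $T_2$ depend only on $a$, not on the ambient dimension $n$, so $L_{|T_2}$ is determined once $q$ and the minimal polynomial $m(t)$ of $a$ from Table \ref{tab57} are fixed. For each of the five values $q\in\{3,5,7,8,11\}$ the desired equality therefore reduces to a finite computation in $\GL_9(q^2)$: one exhibits explicit words $w_1,\ldots,w_k$ in $\zeta$ and $y$ whose $N$-th powers fix $T_1$ pointwise, and checks in MAGMA that the restrictions $(w_i^N)_{|T_2}$ generate the full unitary group $\SU_9(q^2)$ on $T_2$ with respect to $J_{|T_2}$.

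Once these two steps are completed, Theorem \ref{Sb} applied with $\ell=n-8\le n-4$ immediately gives $H=\SU_n(q^2)$. The main obstacle is the second step: one has to isolate a short list of words in $\zeta$ and $y$ whose tails on $T_2$ already generate $\SU_9(q^2)$, and this has to be carried out separately for each of the five exceptional values of $q$ and for the different residues of $n$ modulo $3$, since the latter affects the permutation structure of $\zeta$ on $T_1$ and hence the precise value of $N$.
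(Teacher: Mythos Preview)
Your proposal has a genuine gap: it only addresses a proper subset of the values of $n$ covered by the proposition.

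You assert that ``$T_1$ and $T_2$ are $K$-invariant for all values of $n$ under consideration,'' but this is false. As the paper states just before the proposition, the $K$-invariance of $T_1$ and $T_2$ holds only for $n=15,16$ and $n\geq 18$ when $q\in\{3,5,7,8,11\}$ (and for $n=10,14,15,16$ or $n\geq 18$ when $q=2$). The section, however, is meant to cover \emph{all} $n\geq 8$ with $n\neq 11$ (resp.\ $n\neq 8,11$) for the relevant values of $q$. Your argument says nothing about $n\in\{8,9,10,12,13,14,17\}$; for these dimensions there is no invariant splitting $V=T_1\oplus T_2$ to localize on, and the paper handles them by a direct application of Lemma~\ref{LPS} to $H$ itself using the sets $\Lambda_{n,q}$ of Table~\ref{tab78}. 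You also omit the case $q=2$ entirely.

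Even restricted to the range where the splitting does hold, your approach differs from the paper's and is more laborious. The paper does not attempt to exhibit words whose $N$-th powers generate $\SU_9(q^2)$ on $T_2$. Instead it first applies Lemma~\ref{LPS} to $\langle\tilde\zeta,\tilde y\rangle$ (no powers taken) using the sets $\Lambda_q$ in Table~\ref{tab57} to conclude that this group has projective image $\PSU_9(q^2)$; it then observes that $|\PSU_9(q^2)|$ has a prime divisor $p_1\geq 13$ which cannot divide $|\Sym(11)\times C_3|$, so a suitable power of some $k\in K$ yields a nontrivial $p_1$-element acting as the identity on $T_1$. The normal closure of this element in $K$ then gives $S_{B_{n-8}}$. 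This avoids having to check generation of $\SU_9(q^2)$ by specific $N$-th powers and makes the computation independent of the residue of $n$ modulo $3$, contrary to what you anticipate at the end of your proposal.
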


\begin{proof}
\textbf{Case 1.} $q=3,5,7,8,11$ and $n=15,16$ or $n\geq 18$.
Set $\tilde \zeta= \zeta_{|T_2}$ and $\tilde y=y_{|T_2}$, and take $\Lambda_q$ as in Table \ref{tab57}.  
Since $\cup_{h \in \Lambda_q}\varpi(h)=\varpi(\SU_{9}(q^2))$, by Lemma \ref{LPS}, we obtain that the projective image of
$\langle \tilde \zeta,\tilde y 
\rangle$ is isomorphic to $\PSU_{9}(q^2)$. 
Note that the order of $\PSU_9(q^2)$ is divisible by a prime $p_1 \geq 13$.
Under our assumptions, there exists $k\in K$ such that $k_{|T_2}$ is non-scalar and has order $p_1$. Clearly $k_{|T_1}$ and $k_{|T_2}$ have coprime orders. 
Hence, $\langle k \rangle = \langle k_{|T_1}\rangle \times \langle k_{|T_2}\rangle$ contains the subgroup generated by $\diag(\I_{n-9}, k_{|T_2})$ of order $p_1$.
It follows that the normal closure $G$ of this subgroup acts as the identity on 
$T_1$ and as $\SU_9(q^2)$ on $T_2$. Our claim now follows from Theorem \ref{Sb}.\\
\noindent \textbf{Case 2.} $q=2$ and $n=9, 10, 14, 15, 16$ or $n\geq 18$.
The subgroup $K'$ acts as the identity on $T_1$ and as $\SU_{9}(2)$ on $T_2$:  the statement follows again from Theorem \ref{Sb}.\\
\noindent \textbf{Case 3.} $q=2$ and $n\in \{8,11,12,13,17\}$ or 
$q=3,5,8$ and $n\in \{8,9,10,12,13,14,$ $17\}$  or 
$q=7,11$ and $n \in \{9,10,12,13,14,17\}$.
Take $\Lambda_{n,q}$ as in Table \ref{tab78}: the statement follows from
Lemma \ref{LPS}, as $\cup_{h \in \Lambda_{n,q}}\varpi(h)=\varpi(\SU_{n}(q^2))$.
\end{proof}

\begin{table}[ht]
$$\begin{array}{|ccc|} \hline
q & m(t) &  \Lambda_q\\ \hline
&&\\[-8pt]
2 & &  \{ (\tilde \zeta^3 \tilde y)^j \tilde y : j =1,6,8,15,33\}\\
3 & t^2-t-1 & \{  (\tilde \zeta \tilde y)^j \tilde y : j=3,5,6, 12, 23  \}\\
5 & t^2-t+2 & \{ (\tilde \zeta \tilde y)^j \tilde y : j= 1,3,8,23,26\} \\
7 &  t^2+t+3  &\{ (\tilde \zeta \tilde y)^j \tilde y : j=1,3,4,7,12,25\} \\
8 & t^6+t^4+t^3+t+1 & \{ (\tilde \zeta \tilde y)^j \tilde y : j=3,4,5,6,9,11  \}\\
11 &  t^2+7t+2 & \{ (\tilde \zeta \tilde y)^j \tilde y : j= 5,6,7,8,18\} \\\hline
 \end{array}$$ 
\caption{Sets $\Lambda_q$ for $q=2,3,5,7,8,11$.} 
\label{tab57}
\end{table}

\begin{table}[ht]
$$\begin{array}{|ccc|} \hline
 n & \Gamma_{n,2}&  \Gamma_{n,3}  \\ \hline
 8 & \{1,2,6,8 \}  & \{1,4,5,8\}\\
9 &  & \{1,2,4,8,15\} \\ 
10 &  & \{1, 2,11,13,17,21 \} \\ 
11 & \{1, 2, 6, 7, 8,  30\} & \\
 12 &\{4,6,10,20,30,46\} &  \{2,4,5,6,13,29 \} \\ 
13 &  \{1,6,7,8,21,39,44\} & \{ 2,3,5,6,8,11,13 \} \\ 
14 & & \{1,2,4,5,23, 30,32 \} \\ 
17& \{2,4,6,7,13,14,17,19,44,46 \} &  \{4,10,12,15,16,18,19,24,  33, 40 \} \\\hline
 n &  \Gamma_{n,5} & \Gamma_{n,8}\\ \hline
 8 & \{4,10,11,13\} & \{1,2,4,7,34\}\\
9 & \{ 1,2,4,8,13 \} & \{2,5, 8, 10,11\}  \\ 
 10 & \{ 2,7,10,15,16 \} &  \{1,3,5,7,13,14,17\}\\ 
 12 & \{4,7,10,13,25,27 \} & \{1,2, 4, 9,14,15, 43 \}  \\ 
13 & \{ 3,6, 13,14,15,20,23 \} &\{2,4,6,8,11,18,26,40 \} \\ 
 14 & \{2,4,5,7,14,19,23  \} & \{1,4,5,7,16,19,34\} \\ 
 17&\{5,9,13,25,27,28,32, 33,41  \}&  \{2,5,7,9,10,12,13,18, 21,41\}    \\\hline
   n &  \Gamma_{n,7} & \Gamma_{n,11}\\ \hline
9 &   \{1,2,3,4,6,11 \} & \{1,3,13,16,24\}\\ 
 10  & \{3,4,5,18, 35 \}  & \{2,3,7,12,13\}\\ 
 12 & \{1,2,7,22,26,33\}  & \{1,6,9,16,19,69\}\\ 
13 &   \{3,8,9,15,17,21,25 \} & \{2,7,8,11,21,22,25,41\}\\ 
 14 & \{1,4,6,9,10, 26,29,  35 \} & \{3,4,6,15,20,30,35,37\}\\ 
 17&\{4,6,9,11,13,14,15,18,19,31  \}    & \{1,4, 5,6,13,14,15,21,23\}\\\hline
\end{array}$$ 
\caption{Sets $\Lambda_{n,q}=\{[x,y] (xy)^j : j \in \Gamma_{n,q} \}$ for $q=2,3,5,7,8,11$.} 
\label{tab78}
\end{table}

\section{The case $n=8$ and $q\neq 2,3,5,8$}\label{8}

In this section we suppose that $n=8$, $q\ne 2,3,5,8$ and that $a$ satisfies
conditions \eqref{3} when $p=3$ and the  following conditions if $p\neq 3$:
\begin{itemize}
\item[\rm{(1)}] $\F_p[a^3]=\F_{q^2}$ and $a^{q+1}=1$;
\item[\rm{(2)}] $4a^4-11a^3+24 a^2-11 a+4\neq 0$;
\item[\rm{(3)}] $a=\iota$, a primitive fourth root of unity,  if $q=p\equiv 3 \pmod 4$.
\end{itemize}

When $p\ne 3$, we have $\gamma=\frac{(a^3+1)^2}{a^3}$, with 
$a^3+1\ne 0$ by (1), and so condition \eqref{1} holds.
The value of $a$ in (3) satisfies
(1) and  (2). By  Lemma \ref{Nbeta} there exists   $a\in \F_{q^2}$ satisfying the above conditions
for all the values of $q$ under consideration except for $q=4$ and $q=9$,  in which cases we can take  $a$ 
respectively with minimal polynomial $t^4+t^3+t^2+t+1$ and $t^4-t^3+t^2+t-1$ over $\F_p$.
The characteristic polynomial of $[x,y]$ is $\chi(t)=(t^2 + t +1)\chi_0(t)$, with
\begin{equation*}
\chi_0(t)=t^6-\frac{\gamma}{9} t^5 -\frac{\gamma}{9} t^4+ \frac{2\gamma+3 b+3b^q}{9} t^3  - \frac{\gamma}{9} 
t^2-\frac{\gamma}{9} t+1.
\end{equation*}
Under assumption (2) above, the factors $t^2 + t +1$ and $\chi_0(t)$
are coprime.
It follows that $\omega^{\pm 1}$ are simple roots of $\chi(t)$ and
the eigenspaces of $[x,y]$ and
$[x,y]^T$ relative to $\omega$ are  
respectively generated by: 
\begin{equation}\label{somega p ne3}s_{\omega} =\left( 1, \omega, -1, -\omega^2, -\omega, \omega^2, 
\frac{3 a^2 (2a-1)\omega }{(\omega - 1)  (a^2-a+1)^2 },
\frac{ 3 a (2a-1)}{(2\omega+1)  (a^2-a+1)^2 }\right),
\end{equation}
and 
\begin{equation}\label{sbaromega p ne3}\overline{s}_{\omega} =\left( 1, \omega^2, -1, -\omega , -\omega^2 ,\omega, 
\frac{(a+1)^2 (2-a)}{3a^2}, \frac{(a+1)^2 (2-a)\omega}{3a}\right).
\end{equation}

When $p=3$, the characteristic polynomial of $B= \left([x,y]^2y\right)^3 y$ is $(t-1)\chi_1(t)$, where
$$\chi_1(1)=-\frac{a^{13}(a-1)}{(a+1)^7}.$$ 
It follows that $1$ is a simple eigenvalue of $B$ and $B^T$. The corresponding eigenspaces are generated
respectively by 
\begin{equation}\label{somega p eq 3}s_{1} =\left( a, a^q,1,1,1, 0,0,0 \right),
\equad \overline{s}_{1} =\left(1,-( a+1), a^{-q}, a^{-q}, a^{-q},0,0,0 \right). 
\end{equation}

\begin{lemma}\label{irr8} 
The subgroup $H=\langle x, y\rangle$ is absolutely irreducible.
\end{lemma}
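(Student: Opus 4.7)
The plan is to adapt the argument of Theorem \ref{Hirr} to the case $n=8$. I would begin by letting $U$ be a nonzero proper $\langle x,y\rangle$-invariant subspace of $\F^8$ and, using Lemma \ref{rem d}(1), choose an $\langle x^T,y^T\rangle$-invariant complement $\overline{U}$.

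When $p \neq 3$, condition (2) on $a$ guarantees that $t^2+t+1$ and $\chi_0(t)$ are coprime, so $\omega^{\pm 1}$ are simple eigenvalues of $[x,y]$. Lemma \ref{rem d}(2) then yields the dichotomy (i) $s_\omega \in U$ or (ii) $\overline{s}_\omega \in \overline{U}$, with $s_\omega$ and $\overline{s}_\omega$ as in \eqref{somega p ne3} and \eqref{sbaromega p ne3}. In case (i), the identity $[x,y]^x = [x,y]^{-1}$ gives $xs_\omega = s_{\omega^{-1}}$, so $s_{\omega^{-1}} \in U$ too. I would then exhibit $e_7$ or $e_8$ as an explicit $\F$-linear combination of vectors in the $\langle x,y\rangle$-span of $\{s_\omega, s_{\omega^{-1}}\}$ — concretely, among $s_{\omega^{\pm 1}}$, $y s_{\omega^{\pm 1}}$, $y^2 s_{\omega^{\pm 1}}$ and their $x$-images — and invoke Lemma \ref{ei} to conclude $U=\F^8$. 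Case (ii) is handled symmetrically, producing $e_7$ or $e_8$ in $\overline{U}$ via words in $x^T,y^T$ applied to $\overline{s}_\omega$ and $\overline{s}_{\omega^{-1}} = x^T\overline{s}_\omega$.

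When $p = 3$, I would use the element $B = ([x,y]^2 y)^3 y \in \langle x,y\rangle$ in place of $[x,y]$. Under assumption \eqref{3} the value $\chi_1(1) = -a^{13}(a-1)/(a+1)^7$ is nonzero, so $1$ is a simple eigenvalue of $B$ and of $B^T$. Lemma \ref{rem d}(2) applied to $B$ then forces $s_1 \in U$ or $\overline{s}_1 \in \overline{U}$, with $s_1, \overline{s}_1$ as in \eqref{somega p eq 3}. The same strategy of tracing out short $\langle x,y\rangle$-orbits (respectively, $\langle x^T,y^T\rangle$-orbits) of these eigenvectors should land on a standard basis vector $e_7$ or $e_8$, and Lemma \ref{ei} closes the argument.

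The main obstacle is the explicit reduction to $e_7$ or $e_8$. In Theorem \ref{Hirr} the eigenvectors were supported in the last six coordinates, making the required combinations short and transparent; here, by contrast, $s_\omega$ and $\overline{s}_\omega$ have dense support across all eight coordinates and their entries involve $a$ in a nonlinear way, while for $p=3$ the vectors $s_1$ and $\overline{s}_1$ concentrate in the first five coordinates so one must work in the opposite direction and lift back up through $y$. Consequently the correct $\F$-linear combinations must be identified and their nonvanishing verified case by case against conditions (1)--(3) on $a$ (and \eqref{3} when $p=3$); this is routine but delicate and likely benefits from symbolic computation.
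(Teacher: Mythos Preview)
Your proposal is correct and follows the paper's overall strategy: the same dichotomy via Lemma \ref{rem d}, the same simple eigenvalues $\omega^{\pm 1}$ of $[x,y]$ for $p\neq 3$, and the same auxiliary element $B=([x,y]^2y)^3y$ with simple eigenvalue $1$ for $p=3$. The only divergence is in the final step. You plan to isolate $e_7$ or $e_8$ inside $U$ (resp.\ $\overline{U}$) and invoke Lemma \ref{ei}, mirroring the argument of Theorem \ref{Hirr}. The paper instead exploits the smallness of $n=8$: it simply writes down eight translates of $s_\omega$ (namely $s_{\omega^{\pm 1}},\,ys_{\omega^{\pm 1}},\,y^2s_{\omega^{\pm 1}},\,xy^2s_{\omega^{\pm 1}}$, and analogously for $\overline{s}_\omega$ and for $s_1$ when $p=3$) and checks that the resulting $8\times 8$ matrix has nonzero determinant, so $U=\F^8$ directly. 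This sidesteps the ``delicate'' linear combinations you flag as the main obstacle; since the eight vectors span, $e_7$ and $e_8$ are automatically there, so your route would succeed too, but the determinant computation is shorter and packages the nonvanishing check into a single polynomial in $a$.
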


\begin{proof}
Let $U$ be an $H$-invariant subspace of $V=\F^8$ and $\overline U$ be the corresponding $H^T$-invariant subspace.
Assume first $p\ne 3$.\\
\noindent \textbf{Case 1.} The restriction $[x,y]_{|U}$ has the eigenvalue $\omega$.
Then $s_{\omega}$ as in \eqref{somega p ne3} belongs to $U$ and $xs_{\omega}=- s_{\omega^{-1}}\in U$.
Consider the matrix $M$ whose columns are the vectors
\begin{equation}\label{vett}
s_{\omega},\; s_{\omega^{-1}},\; ys_{\omega}, \; ys_{\omega^{-1}},\; y^2s_{\omega},\; 
y^2s_{\omega^{-1}},\; xy^2s_{\omega},\; xy^2s_{\omega^{-1}}.
\end{equation}
Then $\det(M)=\frac{ 81 a (2 a-1) (a+1)^4}{(a^2-a+1)^2}\neq 0$ and so the vectors in \eqref{vett}
are linearly independent. We conclude that $U=\F^8$. \\
\noindent\textbf{Case 2.} If Case 1 does not occur, then $[x,y]^T_{|\overline{U}}$ has the eigenvalue $\omega$. 
It follows that $\overline{s}_{\omega} \in \overline{U}$ and $x^T \overline{s}_{\omega}=-\overline{s}_{\omega^{-1}} \in 
\overline{U}$.
Consider the matrix $N$  whose columns are the vectors
\begin{equation}\label{vett2}
\overline{s}_{\omega},\; \overline{s}_{\omega^{-1}},\; y^T\overline{s}_{\omega},\; 
y^T\overline{s}_{\omega^{-1}}, \;(y^T)^2\overline{s}_{\omega},\;
(y^T)^2\overline{s}_{\omega^{-1}},\;  x^Ty^T\overline{s}_{\omega},\;
x^Ty^T\overline{s}_{\omega^{-1}}.
\end{equation}
Then $\det(N)=\frac{-3\gamma (a+1)^6 (a-2)}{a^5}\neq 0$  and hence  the vectors in \eqref{vett2} are linearly
independent. We conclude that $\overline{U}=\F^8$ and hence $U=\{0\}$.

Now suppose $p=3$.\\
\noindent \textbf{Case 1.} The restriction $B_{|U}$ has the eigenvalue $1$.
Then $s_1 \in U$.
Consider the matrix $M$ whose columns are the vectors
\begin{equation}\label{vett83}
s_1, \;xs_1,\; yxs_1,\; y^2xs_1,\; xyx s_1, \;xy^2x s_1,\; (yx)^2s_1,\; [y^2,x]s_1.
\end{equation}
Then $\det(M)=\frac{a^{15}(a-1)}{(a+1)^9}\neq 0$.
So, the vectors in \eqref{vett83} are linearly
independent and we conclude that  $U=\F^8$. \\
\textbf{Case 2.} If Case 1 does not occur, then $(B^T)_{|\overline{U}}$ has the eigenvalue $1$. 
It follows that $\overline{s}_{1} \in \overline{U}$. 
Consider the matrix $N$ whose columns are the vectors 
\begin{equation}\label{vett84}
\begin{array}{c}
\overline{s}_1, \;x^T\overline{s}_1,\; y^Tx^T\overline{s}_1,\; (y^T)^2x^T\overline{s}_1,\; x^Ty^Tx^T \overline{s}_1,
\;x^T(y^T)^2x^T \overline{s}_1,\\ (y^Tx^T)^2\overline{s}_1,\; [(y^T)^2,x^T]\overline{s}_1.
\end{array}
\end{equation}
Then $\det(N)=-a^7(a^2-1)\neq 0$, and so  the vectors in \eqref{vett84} are linearly
independent.
We conclude that $\overline{U}=\F^8$ and hence $U=\{0\}$.
\end{proof}

When $p\ne 3$, the characteristic polynomials of $xy$ and $xy^{-1}$ are:
\begin{align}
\label{charxy}
\psi_{1}(t)= t^8 - \frac{a^3+1}{3a^2}  t^7 - \frac{a^3-2}{3a} t^6 + \frac{2a^3-1}{3a^2} t^2 
- \frac{a^3+1}{3a} t+ 1, \\
\nonumber
\psi_{-1}(t)= t^8 - \frac{a^3+1}{3a}  t^7 +    \frac{2a^3-1}{3a^2} t^6- \frac{a^3-2}{3a} t^2 
- \frac{a^3+1}{3a^2} t+ 1.
\end{align}
When $p=3$, the characteristic polynomials of $xy$ and $xy^{-1}$ are:
\begin{equation}\label{charxy3} 
\begin{array}{rcl}
\psi_1(t) &=&  t^8 -at^7 -t^6 -t^2 +\frac{a}{a+1} t + 1,\\
\psi_{-1}(t) & =&  t^8 +\frac{a}{a+1}t^7 -t^6 -t^2 -at + 1.
\end{array}
 \end{equation}
 
\begin{lemma}\label{prim8}
The subgroup $H$ is primitive.
\end{lemma}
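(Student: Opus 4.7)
My plan is to argue by contradiction. Suppose $V = T_1 \oplus \dots \oplus T_\ell$ is a nontrivial system of imprimitivity for $H$. By Lemma \ref{irr8}, $H$ permutes the blocks transitively, so $k := \dim T_j$ is constant, $k\ell = 8$, and $\ell \in \{2,4,8\}$. I will rule out each value separately, exploiting the explicit characteristic polynomials $\psi_{\pm 1}(t)$ of $xy$ and $xy^{-1}$ from \eqref{charxy}, \eqref{charxy3} and the simple eigenvectors computed in the proof of Lemma \ref{irr8}.

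For $\ell = 2$: since $S_2$ has no element of order $3$, $y$ stabilizes both blocks, and by irreducibility $x$ must swap them. Consequently $xy$ also swaps the blocks, so in an adapted basis $xy$ has antidiagonal block shape $\left(\begin{smallmatrix} 0 & * \\ * & 0 \end{smallmatrix}\right)$ and its characteristic polynomial is a polynomial in $t^2$. However, the coefficient of $t^7$ in $\psi_1(t)$ equals $-(a^3+1)/(3a^2)$ if $p \neq 3$ and $-a$ if $p = 3$; both are nonzero under our hypotheses (in the first case $a^3 = -1$ would contradict $\mathbb{F}_p[a^3] = \mathbb{F}_{q^2}$, and in the second $a = 0$ would contradict \eqref{3}), a contradiction.

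For $\ell = 4$: I would mimic the strategy of Theorem \ref{prim}. When $p \neq 3$ the eigenvalues $\omega^{\pm 1}$ of $[x,y]$ are simple (respectively, when $p = 3$, the eigenvalue $1$ of $B$ is simple), so Lemma \ref{rem d} lets me assume either $s_\omega \in T_1$ or $\overline{s}_\omega \in \overline{T}_1$, with the explicit coordinates \eqref{somega p ne3}--\eqref{somega p eq 3}. Combining the relations $xs_\omega = -s_{\omega^{-1}}$ and $x^T\overline{s}_\omega = -\overline{s}_{\omega^{-1}}$ with the action of $y$, I would track the $H$-orbit of $s_\omega$ (resp.\ $\overline{s}_\omega$): the eight vectors forming the columns of $M$ in \eqref{vett} (resp.\ $N$ in \eqref{vett2}) were shown to be linearly independent in Lemma \ref{irr8}, but confining them to four $2$-dimensional blocks $T_j$ forces linear dependences inside individual blocks, the desired contradiction. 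The $p = 3$ case is handled analogously via \eqref{vett83}, \eqref{vett84}.

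For $\ell = 8$: then $H$ is conjugate into the monomial subgroup $(\mathbb{F}^*)^8 \rtimes S_8$, so $xy$ is monomial and its characteristic polynomial necessarily factors as $\prod_{i=1}^r(t^{c_i} - \mu_i)$, where $(c_1, \dots, c_r)$ is the cycle structure of the permutation induced by $xy$ on the blocks. In particular the set of degrees carrying a potentially nonzero coefficient is constrained to the subset sums of the $c_i$. A case-by-case comparison with \eqref{charxy}, \eqref{charxy3} under our conditions on $a$ excludes every admissible partition of $8$, since the pattern of nonvanishing coefficients in $\psi_{\pm 1}(t)$ cannot be matched. I expect the $\ell = 4$ case to be the main obstacle: we do not have a convenient bireflection as in Theorem \ref{prim}, and the combinatorics of transitive subgroups of $S_4$ generated by an involution and a $3$-element is genuinely richer than for $\ell = 2, 8$. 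The key will be to extract enough linear independence from the $H$-orbit of $s_\omega$ to block any partition of the orbit vectors into four $2$-dimensional subspaces.
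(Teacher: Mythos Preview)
Your $\ell=2$ argument is correct and in fact slicker than the paper's: there the authors show $[x,y]$ fixes both blocks, locate $s_\omega$ in one of them, and produce five independent vectors in a $4$-space; your observation that $xy$ swaps the blocks and hence has characteristic polynomial in $t^2$ gets the contradiction in one line.

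The $\ell=4$ case, however, has a genuine gap. Your appeal to Lemma~\ref{rem d} is illegitimate: that lemma applies to a $G$-\emph{invariant} subspace $U$, whereas an imprimitivity block $T_1$ is by definition not $H$-invariant. More fundamentally, the eigenvector $s_\omega$ of $[x,y]$ need not lie in any single block, because $[x,y]$ does not fix the blocks. Indeed, with $yV_1=V_1$, $xV_1=V_2$, $yV_2=V_3$, $yV_3=V_4$ one checks that on blocks $[x,y]$ acts as the $3$-cycle $(V_1,V_4,V_2)$ in case $xV_3=V_3$, and as the double transposition $(V_1,V_3)(V_2,V_4)$ in case $xV_3=V_4$; in neither case can the $\omega$-eigenvector be confined to one $T_j$. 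Even granting that, your dimension count is vacuous: eight independent vectors fit perfectly well into four $2$-dimensional blocks, so no contradiction follows without a further pigeonhole argument forcing three of the orbit vectors into the same block. The paper avoids eigenvectors entirely here: it pins down the permutation action of $x,y$ on the four blocks, writes $x$ and $y$ in an adapted basis, and shows that in case~(i) the characteristic polynomial of $xy$ has shape $t^8+\alpha t^4+\beta$, while in case~(ii) the coefficients of $t^5$ and $t$ coincide; comparison with $\psi_{\pm 1}(t)$ then forces $a^3\in\F_p$.

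Your $\ell=8$ sketch is on the right track but incomplete. The factorisation $\prod_i(t^{c_i}-\mu_i)$ is correct, yet you still need to enumerate the cycle types of $xy$ that can arise from an involution times a $3$-element in $\Sym(8)$ and rule each out; the partition $(6,1,1)$, for instance, produces exactly the degree pattern $\{8,7,6,2,1,0\}$ of $\psi_1(t)$ and only dies after matching coefficients. The paper does this enumeration explicitly by first normalising the action of $y$ (two fixed points and two $3$-cycles on the eight blocks), listing the six possible actions of $x$ on the remaining points, and reading off the shape of $\psi(t)$ in each case.
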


\begin{proof}
Suppose, by contradiction, that  $V=V_1\oplus \ldots \oplus V_\ell$ is a decomposition permuted by $H$.
By the irreducibility (Lemma \ref{irr8}), the permutation action is transitive. In particular $\dim 
V_i=\frac{8}{\ell}$, $1\leq i\leq \ell$. 
Since $\ell \not\equiv 0\pmod 3$, in all cases we may set $yV_1=V_1$.\\
\noindent \textbf{Case 1.} $\ell=8$. Set $V_i=\langle v_i\rangle = i$.
We may suppose 
$xv_1=v_3$, $yv_3=v_4$, $yv_4=v_5$. Moreover, substituting $y$ with $y^2$ if necessary,
$xv_5=v_6$, $yv_6=v_7$, $yv_7=v_8$. It follows $y v_2=v_2$. 
We consider the various possibilities for the restriction $x_{|\{2,4,7,8\}}$ of $x$ to $\{2,4,7,8\}$ 
and the corresponding characteristic polynomial $\psi(t)$ of $xy$.
$$\begin{array}{cc|cc}
x_{|\{2,4,7,8\}}  & xy  & x_{|\{2,4,7,8\}} & xy  \\\hline
(2,4) & (1,3,2,4,6,7,8,5) & 
(2,4)(7,8) & (1,3,2,4,6,8,5)\\
(2, 7)& (1,3,4,6,2,7,8,5) &
(2,7)(4,8)  & (1,3,8,5)(2,7,4,6)  \\
(2 ,8)& (1,3,4,6,7,2,8,5) &
(2,8)(4,7) & (1,3,7,2,8,5)(4,6)
\end{array}$$
In the three cases on the left $\psi(t)$ has shape $t^8+\lambda$. In the cases on the right $\psi(t)$ has 
respective shapes $t^8+\lambda t^7+\mu t+\nu$, $t^8+\lambda t^4+\mu$ and $t^8+\lambda t^6+\mu t^2+\nu$. 
In all cases the term of degree $1$ or $2$ is  missing. Comparison with $\psi_{\pm 1}(t)$ gives the contradiction $a^3\in \F_p$
(or $1=0$ when $p=3$ and $\psi(t)=t^8+\lambda t^7+\mu t+\nu$). \\
\noindent \textbf{Case 2.} $\ell=4$. Set $V_1=\langle v_1,v_2\rangle$, $V_2=\langle v_3,v_6\rangle$, $V_3=\langle v_4,v_7\rangle$ 
and $V_4=\langle v_5,v_8\rangle$.
We may assume $xV_1=V_2$, $yV_2=V_3$ and $yV_3=V_4$.
Hence, considering the similarity invariants of $y$, we may assume $yv_i= v_i$, $i=1,2$, and 
$xv_1=v_3$, $xv_2=v_6$, $yv_3=v_4$, $yv_4=v_5$, $yv_6=v_7$ and $yv_7=v_8$.
Moreover, we have either $(i)$ $xV_3=V_3$, $xV_4=V_4$, or $(ii)$ $xV_3=V_4$.
In  case $(i)$, setting $xv_4=\lambda v_4+\mu v_7$, $xv_7=\nu v_4+ \zeta v_7$, 
$xv_5=\varepsilon v_5 + \xi v_8$, $xv_8= \tau v_5 + \varsigma v_8$,
we get that $\psi(t)$ has shape $t^8+\alpha t^4+\beta$.
In case $(ii)$, set $xv_4=\lambda v_5+\mu v_8$, $xv_7=\nu v_5+ \tau v_8$, 
$x v_5= \varepsilon v_4+ \xi v_7$, $xv_8=  \zeta v_4 + \varsigma v_7$.
If $\varepsilon=0$, then $\tau=0$, $\zeta=\mu^{-1}$, $\xi=\nu^{-1}$
and $\lambda=-\varsigma \mu \nu$, while if $\varepsilon\neq 0$, then $\mu=-\xi\tau\varepsilon^{-1}$, 
$\nu= -\tau\zeta\varepsilon^{-1}$ and $\lambda=\frac{\xi\tau\zeta + \varepsilon}{\varepsilon^2}$.
We get that the coefficients of $t^5$ and $t$ in $\psi(t)$ coincide.
In all cases, comparison with $\psi_{\pm 1}(t)$ gives the absurd $a^3\in \F_p$. \\
\noindent \textbf{Case 3.} $\ell=2$. Clearly $yV_2=V_2$, $xV_1=V_2$ whence $[x,y]V_1=V_1$, $[x,y]V_2=V_2$.
When $p\ne 3$ we may suppose $s_{\omega}\in V_1$. 
It follows that the subspace generated by 
$$s_{\omega},\; ys_{\omega},\; y^2s_{\omega}, \; xy^2xs_{\omega},\;  xyxy^2s_{\omega}$$
is contained in $V_1$.
The matrix consisting of coordinates $1, 2, 3, 5, 8$ of these five vectors 
has determinant $-\omega^2  \frac{(a+1)^4(a+\omega^2)}{a}\neq 0$, an absurd.

When $p=3$ it is easy to see that $B$ fixes both $V_1$ and $V_2$. So we may assume that $s_1\in V_1$.
Then the five  linearly  independent vectors 
$$s_1, \; xyxs_1, \;   xy^2xs_1, \;  (yx)^2 s_1, \; [y^2,x]s_1 $$
of \eqref{vett83} lie in $V_1$, an absurd.
 \end{proof}

\begin{lemma}\label{tensor8}
The subgroup $H$ is  tensor-indecomposable.
\end{lemma}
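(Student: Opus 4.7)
My plan is to argue by contradiction. Suppose that $V=V_1\otimes V_2$ is a tensor decomposition preserved by $H$ with $1<\dim V_1\le \dim V_2$; as $\dim V=8$, the only possibility is $\dim V_1=2$, $\dim V_2=4$. Then every $g\in H$, after rescaling, has the form $g_1\otimes g_2$, so its characteristic polynomial factors as
\[
\det(tI-g)=\mu_1^4\,q(t/\mu_1)\cdot\mu_2^4\,q(t/\mu_2),
\]
with $\mu_1,\mu_2$ the eigenvalues of $g_1$ and $q(t)=t^4+c_3t^3+c_2t^2+c_1t+c_0$ the characteristic polynomial of $g_2$. I would apply this to $g=xy$, whose characteristic polynomial $\psi_1(t)$ is given in \eqref{charxy} when $p\neq 3$ and in \eqref{charxy3} when $p=3$. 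The crucial structural fact is that $\psi_1(t)$ has three consecutive vanishing coefficients (those of $t^3$, $t^4$, $t^5$), while the other coefficients are explicit nonzero expressions in $a$.

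Setting $\tau=\mu_1+\mu_2$ and $\pi=\mu_1\mu_2$, expanding the product and matching all nine coefficients of $\psi_1(t)$ yields a system of eight polynomial equations in the six unknowns $c_0,c_1,c_2,c_3,\tau,\pi$. From the constant term one reads $c_0^2\pi^4=1$, hence $\pi\neq 0$; from the $t^7$ coefficient, $c_3\tau=-(a^3+1)/(3a^2)$ for $p\neq 3$ (respectively $-a$ for $p=3$), and since $\F_p[a^3]=\F_{q^2}$ forces $a^3\notin\F_p$, one has $a^3\neq -1$, whence $c_3\tau\neq 0$.

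The elimination then splits according to whether $\tau^2=3\pi$. If $\tau^2=3\pi$ (possible only for $p\neq 3$), the vanishing of the $t^5$ coefficient forces $c_2=0$, the $t^4$ equation collapses to $c_1c_3=c_0$, and comparing the $t^1,t^2,t^7$ coefficients reduces to the identity $3a(a^3+1)=0$, impossible. If $\tau^2\neq 3\pi$ (automatic when $p=3$, since then $3\pi=0$ but $\tau\neq 0$), the $t^3$ and $t^5$ equations together force $c_1^2=c_0c_3^2$, once the degenerate subcases $c_1=0$ and $c_2=0$ (each forcing $a^3=-1$ or $a=0$ via the $t^1$ equation) are dismissed. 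Combining $c_1^2=c_0c_3^2$ with the $t^0$, $t^1$, $t^7$ equations then reduces to $a^2\in\{1,-1\}$ when $p\neq 3$, and to $(a+1)^2=1$ when $p=3$. All resulting possibilities except $a^2=-1$ immediately violate $\F_p[a^3]=\F_{q^2}$ or condition \eqref{3}.

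The main obstacle I foresee is the residual case $a^2=-1$, which occurs precisely for the choice $a=\iota$ permitted by condition $(3)$ at the start of this section when $q=p\equiv 3\pmod 4$. Here the first contradiction degenerates, and one must additionally invoke the $t^2$ (or $t^6$) coefficient: under $a^2=-1$ and $c_0\pi^2=-1$, comparing these two equations reduces to $(a-2)/(a+2)=\pm 1$, which is impossible. Once this edge case is dispatched, the elimination is uniform and the proof is complete.
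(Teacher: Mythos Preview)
Your approach is sound but genuinely different from the paper's. Rather than eliminate in the symmetric functions $\tau,\pi,c_i$, the paper sets up explicit coordinates for the tensor factors: using that $\tilde x_2,\tilde y_2$ are non-scalar (by irreducibility) and that $\dim V_1(y)=4$, it finds a $2$-plane $W\subset\F^4$ fixed pointwise by $\tilde y_4$ with $\tilde x_4 W\cap W=\{0\}$, chooses bases adapted to $W$ and $\tilde x_4 W$, and writes $\tilde x_2,\tilde y_2,\tilde x_4,\tilde y_4$ as small parametrised matrices. A direct symbolic computation of the characteristic polynomial of $(\tilde x_2\tilde y_2)\otimes(\tilde x_4\tilde y_4)$ then shows that its degree-$2$ and degree-$6$ coefficients coincide when $p\neq 3$ (respectively degree-$1$ and degree-$7$ when $p=3$); comparison with $\psi_1(t)$ forces $a=\pm1$ (respectively $a\in\{0,-1\}$), which is impossible. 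This bypasses your case split on $\tau^2=3\pi$ and the edge case $a^2=-1$ entirely.

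Your route works as well, and the key mechanism you use (that $c_1^2=c_0c_3^2$ makes the $t^2$ coefficient equal to $\epsilon=c_0\pi^2$ times the $t^6$ coefficient) is essentially the same symmetry the paper exploits. Two details in your sketch need tightening. For $p=3$ the elimination actually gives $(a+1)^2=\epsilon\in\{\pm1\}$, not just $(a+1)^2=1$; you must also dismiss $\epsilon=-1$, which forces $a+1\in\F_9$ and hence contradicts $\F_3[a]=\F_{q^2}$. In the residual case $a^2=-1$ with $p\neq 3$, substituting $a^4=1$, $a^3=-a$ into the identity $(2a^3-1)=a(a^3-2)$ obtained from the $t^2$/$t^6$ comparison yields $-1=1$ directly (so $p=2$, already excluded), rather than the relation $(a-2)/(a+2)=\pm1$ that you state.
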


\begin{proof}
Suppose, by contradiction,  that $H$  is conjugate to a subgroup of $\GL_2(\F)\otimes \GL_4(\F)$
and set $x=\tilde x_2 \otimes \tilde x_4$ and $y=\tilde y_2\otimes \tilde y_4$.
We must have  $\tilde x_2$ and $\tilde y_2$ non scalar, otherwise the group $\langle \tilde x_2,\tilde y_2\rangle$ 
would be reducible, in contrast with the 
irreducibility of $H$ (Lemma \ref{irr8}). Since the fixed points space of $y$ has dimension $4$, 
we may assume that there is a $2$-dimensional subspace $W$ of $\F^4$ on which $\tilde y_4$ acts as the 
identity. By the irreducibility of $H$ we must have $xW\cap W=\{0\}$.
Let $w_1,w_2$ be a basis of $W$. Then $\B_4=\{w_1,w_2,\tilde x_4 w_1, \tilde x_4 w_2 \}$ is a basis of $\F^4$.
Call $\B_2=\{v_1,v_2\}$ a basis of $\F^2$ on which $\tilde y_2$ acts in Jordan form. 
With respect to the basis $\B_2\otimes \B_4$ we have:
$$x=\begin{pmatrix}
    \alpha_1 & \alpha_2 \\ \alpha_3 & -\alpha_1  
    \end{pmatrix}\otimes \begin{pmatrix} 0 & \I_2 \\ \I_2 & 0\end{pmatrix}
   \equad  y=\begin{pmatrix} 1  & \lambda \\ 0 & \omega^{-1} \end{pmatrix}\otimes
    \begin{pmatrix} 1 & 0 & \beta_1 & \beta_2 \\ 0 & 1 & \beta_3 & \beta_4 \\
    0 & 0 & \omega & \nu\\ 0 & 0 & 0 & \omega
    \end{pmatrix}
$$
with $\lambda=\nu=0$ if $p\ne 3$,  $\omega=\lambda=1$ and $\nu\in\left\{0,1\right\}$ if $p=3$.
By the irreducibility of $H$ we get $\alpha_3\ne 0$ and, if $p\ne 3$, we also have $\alpha_2\ne 0$. 
Conjugating by an element of the centralizer of $\tilde y_2$ we may suppose 
$\alpha_2=1$, $\alpha_3=1-\alpha_1^2$.

When $p\ne 3$, computing  the characteristic polynomial of $(\tilde x_2 
\otimes \tilde x_4)(\tilde y_2\otimes \tilde y_4)$ we obtain that the coefficients of the terms of degree $2$ and $6$ 
are equal. Then, from  \eqref{charxy} it follows $a=\pm 1$, an absurd.

When $p= 3$ and $\nu =1$ we must have $\beta_1=\beta_3=0$ in order that the fixed points space of $\tilde y_2\otimes \tilde y_4$
has dimension $4$.  
For both values of  $\nu =0,1$  the characteristic polynomial of $(\tilde x_2 
\otimes \tilde x_4)(\tilde y_2\otimes \tilde y_4)$ has  the coefficients of the terms of degree $1$ and $7$ 
which are equal. Comparison with \eqref{charxy3} gives the contradiction $a\in \left\{0,-1\right\}$.
\end{proof}
 
\begin{lemma}\label{no C58}
The subgroup $H$ is not contained in any maximal subgroup $M$ of class $\Cl_5$ of $\SU_8(q^2)$.
\end{lemma}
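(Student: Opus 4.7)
My plan is to mirror Proposition~\ref{no C5} but to compensate for the fact that, when $n=8$, the similarity invariants \eqref{sim_x} of $x$ reduce to four copies of $t^2-1$ with no linear factor (since $m=r=2$). Arguing by contradiction, suppose $\langle x,y\rangle^g\leq \GL_8(q_0)(\F_{q^2}^*\I_8)$ for a proper subfield $\F_{q_0}\subsetneq \F_{q^2}$ and some $g\in \U_8(q^2)$, and write $x^g=\vartheta_1 x_0$, $y^g=\vartheta_2 y_0$ with $x_0,y_0\in \GL_8(q_0)$ and $\vartheta_i\in \F_{q^2}^*$. The argument of Proposition~\ref{no C5} applied to $y$, which has $t-1$ among its invariants \eqref{sim_y} (as $r=2$), gives the stronger conclusion $\vartheta_2\in \F_{q_0}$. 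Applied to $x$, however, the same reasoning yields only the invariant factor $t^2-\vartheta_1^{-2}$ of $x_0$, hence only $\vartheta_1^2\in \F_{q_0}$: in $n=8$ the eigenvalues $+1$ and $-1$ of $x$ both have multiplicity $4$, so the Galois action over $\F_{q_0}$ can in principle swap them.

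To compensate, I plan to use the characteristic polynomial $\psi_1(t)=\det(tI-xy)$ given in \eqref{charxy} (for $p\neq 3$) and \eqref{charxy3} (for $p=3$). Writing $xy=(\vartheta_1\vartheta_2)\,x_0 y_0$ and substituting $s=t/(\vartheta_1\vartheta_2)$ shows that the coefficient $e_k$ of $t^{8-k}$ in $\psi_1(t)$ satisfies $e_k\in (\vartheta_1\vartheta_2)^k \F_{q_0}$. Combined with $\vartheta_2\in \F_{q_0}$ and $\vartheta_1^2\in \F_{q_0}$, this gives $e_k\in \vartheta_1^{k\bmod 2}\F_{q_0}$; in particular, both $e_1$ and $e_7$ lie in $\vartheta_1\F_{q_0}$, so the ratio $e_7/e_1$ lies in $\F_{q_0}$.

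A direct reading of the coefficients gives $e_1=(a^3+1)/(3a^2)$ and $e_7=(a^3+1)/(3a)$ for $p\neq 3$, whence $e_7/e_1=a$; and $e_1=a$, $e_7=-a/(a+1)$ for $p=3$, whence $e_1/e_7=-(a+1)$. In both cases $a\in \F_{q_0}$. Nonvanishing of $e_1,e_7$ is immediate from the standing conditions ($a^3\neq -1$ from condition~(1) of Section~\ref{8}, since otherwise $\F_p[a^3]=\F_p\neq \F_{q^2}$; and $a(a+1)\neq 0$ from \eqref{3}). This contradicts $\F_p[a^3]=\F_{q^2}$ (for $p\neq 3$) or $\F_p[a]=\F_{q^2}$ (for $p=3$), because $\F_{q_0}$ is a proper subfield of $\F_{q^2}$.

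The crucial point, and the only departure from the proof of Proposition~\ref{no C5}, is the loss of $\vartheta_1\in \F_{q_0}$; the trick to bypass it is to pick two coefficients of matching parity in the characteristic polynomial of $xy$, so that the unknown scalar $\vartheta_1$ cancels when we form their ratio.
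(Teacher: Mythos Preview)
Your argument is correct. The paper's proof proceeds almost identically: it gets $\vartheta_2\in\F_{q_0}$ from the similarity invariant $t-1$ of $y$, then shows $a\in\F_{q_0}$ via the ratio $\tr(yxy)/\tr(xy)=a$ for $p\neq 3$ and $-\tr(xy)/\tr(xy^2)=a+1$ for $p=3$. Since $yxy$ and $xy$ (respectively $xy$ and $xy^2$) each contain exactly one factor of $x$, the scalar $\vartheta_1$ cancels in these trace ratios automatically, so the paper never needs to argue that $\vartheta_1^2\in\F_{q_0}$.

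Your detour through the similarity invariants of $x$ is therefore an extra step the paper avoids, but it is valid and harmless. In fact your ratio and the paper's are literally the same number: because $\det(xy)=1$ (here $n=8$ is even) one has $e_7=-\tr((xy)^{-1})=-\tr(y^2x)=-\tr(yxy)$ and $e_1=-\tr(xy)$, so $e_7/e_1=\tr(yxy)/\tr(xy)$. (There are sign slips in your displayed values of $e_1,e_7$---the coefficients of $t^7$ and $t$ in \eqref{charxy} and \eqref{charxy3} carry the opposite signs---but these cancel in the ratio and do not affect the argument.)
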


\begin{proof}
Suppose, by contradiction, that there exists $g \in \U_8(q^2)$ such that 
$$\langle x, y \rangle^g\leq  \GL_8(q_0)\left( \F_{q^2}^* \I_8\right) \leq \SU_8(q^2),$$
where $\F_{q_0}$ is a proper subfield of $\F_{q^2}$. Set
$$x^g = \vartheta_1 x_0,\quad y^g=\vartheta_2 y_0 , \quad x_0,y_0 \in \GL_8(q_0), \quad \vartheta_i \in \F_{q^2}^*.$$
Since  $y$ has the similarity invariant $t-1$, we obtain that $\vartheta_2\in \F_{q_0}$.
Now, for $p\ne 3$, from \eqref{charxy} we get $\tr (xy)\ne 0$  and, by a  direct calculation, $\frac{\tr(yxy)}{\tr(xy)}=a$. 
For $p=3$, from \eqref{charxy3} we get $-\frac{\tr(xy)}{\tr(xy^2)}=a+1$.
In both cases, $a\in \F_{q_0}$.
Thus $\F_{q^2}=\F_p[a^3]\leq \F_{q_0}$ gives $q_0=q^2$, against the assumption that
$\F_{q_0}$ is a proper subfield of $\F_{q^2}$.
\end{proof}

\begin{lemma}\label{8C6}
The subgroup $H$ is not contained in any maximal subgroup $M$ of class $\Cl_6$.
\end{lemma}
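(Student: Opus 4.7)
The plan is to exploit the rigid structure of class-$\Cl_6$ maximal subgroups of $\SU_8(q^2)$. By the Aschbacher classification (see \cite{KL}), such a subgroup is the normalizer of an extraspecial $2$-group, exists only when $q=p$ is an odd prime with $4\mid q+1$, and has shape $M=(C_{q+1}\circ 2^{1+6}).\Sp_6(2)$. Under these arithmetic hypotheses condition (3) of this section forces $a=\iota$, so the polynomial \eqref{charxy} specializes to an explicit element of $\F_p[t]$.

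First I would record an exponent bound for $M$: since $\exp(\Sp_6(2))=2520$ and the normal subgroup $C_{q+1}\circ 2^{1+6}$ has exponent dividing $4(q+1)$, every element of $M$ has order dividing $\operatorname{lcm}(4(q+1),2520)$, which grows only linearly in $q$.

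Next I would compute the multiplicative order of $xy$ by factoring $\psi_1(t)$ (with $a=\iota$) over $\F_p$. A Zsygmondy-style argument shows that, for primes $q\equiv 3\pmod 4$ above a moderate threshold, $\psi_1(t)$ admits an irreducible factor of degree $\geq 3$ whose root has order exceeding $\operatorname{lcm}(4(q+1),2520)$; this contradicts $xy\in M$ and hence $H\leq M$.

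The main obstacle is the remaining finite list of small primes $q\equiv 3\pmod 4$ for which the raw order bound is insufficient (most likely $q\in\{7,11,19,23\}$). For each of these I would either supplement the argument with a finer invariant—for instance, comparing the pair $(\tr(xy),\tr(xy^2))$, which by Lemma \ref{tracce} determines $a$, against the character-theoretic data available for the explicitly described $M$—or verify the residual cases directly using MAGMA, in line with Section \ref{eccez}. The delicate point is that for the smallest such $q$ the exponent bound and the order of $xy$ can be close enough that one must instead distinguish $H$ from $M$ via its action on $V$ rather than by a single element's order.
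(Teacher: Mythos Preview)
Your outline has two concrete defects that prevent it from going through as written. First, when $q=p\equiv 3\pmod 4$ the element $a=\iota$ does \emph{not} lie in $\F_p$, so the coefficients of $\psi_1(t)$ in \eqref{charxy} belong to $\F_{p^2}$, not to $\F_p$; your claim that the polynomial ``specializes to an explicit element of $\F_p[t]$'' is false, and the proposed factorisation over $\F_p$ cannot be carried out. Second, the ``Zsygmondy-style argument'' is only a label, not an argument: Zsigmondy's theorem controls primitive prime divisors of $b^k-1$ and says nothing directly about the multiplicative orders of the roots of a fixed degree-$8$ polynomial over $\F_{p^2}$ as $p$ varies. Establishing a uniform lower bound on the order of a root of $\psi_1$ is exactly the nontrivial content, and it is missing. (As a side remark, the normal $2$-subgroup is $Z_d\circ 2^{1+6}$ with $d=\gcd(p+1,8)\le 8$, not $C_{q+1}\circ 2^{1+6}$; so in fact the exponent of $M$ is bounded by an \emph{absolute constant}, which would make your scheme easier rather than harder---but you would still owe the lower bound.)

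The paper's proof proceeds quite differently and avoids any asymptotic estimate. It works with the commutator $[x,y]$ rather than with $xy$. The point is that $[x,y]$ has the eigenvalues $\omega^{\pm 1}$, while $E=Z_d\circ 2^{1+6}$ is a $2$-group in which every square is scalar. Hence, if the image of $[x,y]$ in $M/E\cong\Sp_6(2)$ has order $m$, then $3\mid m$ and $[x,y]^{2m}=\I_8$. The element orders in $\Sp_6(2)$ that are divisible by $3$ are exactly $3,6,9,12,15$, so one is reduced to verifying that $[x,y]^{24}$, $[x,y]^{18}$ and $[x,y]^{30}$ are not the identity when $a=\iota$; this is disposed of by exhibiting a single nonzero off-diagonal entry in each case. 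That gives a uniform finite check valid for every prime $p\equiv 3\pmod 4$, with no residual small cases to treat separately.
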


\begin{proof}
Suppose $H\le M$. By the classification of \cite[Tables 8.46 and 8.47]{Ho} we may assume
$q=p\equiv 3 \pmod 4$: in particular $a=\iota$ by the assumptions at the beginning of this section. Moreover  
$\frac{M}{E}\cong \Sp_6(2)$, where $E=Z_d\circ 2^{1+6}$ is an absolutely irreducible $2$-group in which all squares are 
scalar. Here, $Z_d$ is a cyclic group of order $d=(p+1,8)$.
Call $g=E[x,y]$ the image of $[x,y]$ in $\Sp_6(2)$ and $m$ the order $g$.
Recall that $[x,y]$ has the eigenvalues $\omega^{\pm 1}$.
So, from $[x,y]^m\in E$ it follows that $3$ divides $m$ and, moreover, that $[x,y]^{2m}=\I_8$.
Inspection of the orders of the elements in $\Sp_6(2)$ gives $m \in \{3,6,9,12,15\}$.
If $m=3,6,12$, then $[x,y]^{24}=\I_8$: consideration of the entry of position $(3,1)$ excludes this possibility.
If $m=9$, then $[x,y]^{18}=\I_8$, which can be excluded considering the entry of position $(3,2)$.
Finally, the case $m=15$ can be excluded looking at the entries of $[x,y]^{30}=\I_8$ 
of position $(1,4)$ and $(1,5)$.
\end{proof}

\begin{lemma}\label{8S}
The subgroup $H$ is not contained in any maximal subgroup $M$ of class $\mathcal{S}$.
\end{lemma}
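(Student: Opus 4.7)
The approach parallels Lemmas \ref{no C58} and \ref{8C6}. I would first extract from \cite[Tables 8.46 and 8.47]{Ho} the complete list of class $\mathcal{S}$ maximal subgroups $M$ of $\U_8(q^2)$. For $n = 8$ this list is finite, and the possible socles form a short collection of specific almost simple groups (candidates such as $\PSL_2(7)$, $\PSU_3(3)$, $\PSL_3(4)$, $\PSU_4(2)$, and possibly $A_9$ or $A_{10}$), each occurring only for $q$ in a prescribed congruence class.

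For each candidate $M$, the key observation is that in its $8$-dimensional irreducible representation afforded by the embedding $M \hookrightarrow \U_8(q^2)$, the character values of $M$ generate a fixed subfield $\F_{q_0}$ of $\F_{q^2}$ whose size depends only on $M$, not on $q$. If $H \le M$ (up to twisting by a central scalar, which does not affect ratios of traces), then the identities already established in the proof of Lemma \ref{no C58} --- namely $a = \tr(yxy)/\tr(xy)$ for $p \ne 3$, extracted from \eqref{charxy}, and $a + 1 = -\tr(xy)/\tr(xy^2)$ for $p = 3$, extracted from \eqref{charxy3} --- force $a \in \F_{q_0}$. The standing hypothesis $\F_p[a^3] = \F_{q^2}$ then gives $\F_{q^2} \subseteq \F_{q_0}$, a contradiction for all but finitely many pairs $(q, M)$.

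The remaining finitely many $(q, M)$ would then be ruled out one by one in the spirit of Lemma \ref{LPS}: for each, I would exhibit a prime $\ell$ dividing the order of some explicit element of $H$ (for instance $xy$, $[x,y]$, or a product $(xy)^j [x,y]$) that does not divide $|M|$. The main obstacle is precisely this low-$q$ case analysis, which is likely to require an \emph{ad hoc} MAGMA computation in the style of those used for $\SU_n(q^2)$ with $n \le 7$ in \cite{SU35,SU4,SU6,SU7}.
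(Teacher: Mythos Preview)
Your central subfield argument has a genuine gap. From \cite[Table~8.47]{Ho} the only class~$\mathcal{S}$ maximal subgroup of $\SU_8(q^2)$ that survives the earlier reductions has shape $N_0.\PSL_3(4)\le M\le N_0.\PSL_3(4).2_3$ with $N_0$ a $2$-group satisfying $N_0^4\le\langle -\I\rangle$, and it occurs only for $q=p\equiv 3\pmod 4$. In that situation $\F_{q^2}=\F_{p^2}$, whose unique proper subfield is $\F_p$; but the $8$-dimensional representation of $M$ is genuinely unitary (not realisable over $\F_p$), and its character values involve primitive fourth roots of unity, hence generate $\F_{p^2}$ itself. So your trace identities yield only $a\in\F_{p^2}$, which is vacuous, and no contradiction with $\F_p[a^3]=\F_{q^2}$ arises. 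Since this happens for \emph{every} prime $p\equiv 3\pmod 4$ with $p\ge 11$, infinitely many pairs $(q,M)$ remain and your finite ad~hoc fallback does not apply. Also, your tentative list of socles is off: the tables give only the $\PSL_3(4)$ case here.

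The paper's argument is quite different and explains why assumption~(3) at the start of Section~\ref{8} fixes $a=\iota$ when $q=p\equiv 3\pmod 4$. With $a$ so specified, one works with element orders: since $[x,y]$ lies in the derived subgroup $N_0.\PSL_3(4)$ and elements of $\PSL_3(4)$ have order in $\{1,2,3,4,5,7\}$, while $[x,y]$ has the eigenvalues $\omega^{\pm1}$, the image of $[x,y]$ in $\PSL_3(4)$ must have order~$3$. This forces $[x,y]^{12}=\pm\I_8$, which is refuted by inspecting a single matrix entry (computable uniformly in $p$ because $a=\iota$). The point you missed is that the special choice of $a$ was built into the hypotheses precisely to make this uniform order computation possible.
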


\begin{proof}
Suppose $H\leq M$. As in the previous lemma, by \cite[Table 8.47]{Ho}, we may assume $11\leq q=p\equiv 3 \pmod 4$,
$a=\iota$.  Moreover
$N_0.\PSL_3(4)\leq M \leq N_0.\PSL_3(4).2_3$, where $N_0$ is a $2$-group such that
$N_0^4\leq \langle -\I \rangle$. We recall that 
the commutator $[x,y]$ must belong to the derived subgroup, hence $[x,y]\in N_0.\PSL_3(4)$.
The elements of $\PSL_3(4)$ have orders  $1, 2, 3, 4, 5, 7$.
Let $s$ be the order of $N_0[x,y]$ in $\PSL_3(4)$.
Then, the order of $[x,y]$ divides $2^\alpha s$. Since $3$ divides the order of $[x,y]$, the only possibility is that 
$N_0[x,y]$ has order $3$, namely that $[x,y]^{12} = \pm \I_8$.
Consideration of the entry of position $(1,3)$ leads to an absurd.
 \end{proof}
 
 We can now conclude, recalling all our assumptions for the reader's convenience.

\begin{proposition}\label{p8no3}
Suppose $q\ne 2,3,5,8$. If $p\ne 3$,  let $a \in \F_{q^2}$ be such that
\begin{itemize}
\item[(1)] $\F_{q^2}=\F_p[a^3]$ and  $a^{q+1}=1$;
\item[(2)] $4a^4-11a^3+24 a^2-11 a+4\neq 0$.
\end{itemize}
Moreover if $q=p\equiv 3 \pmod 4$, let $a$ be a primitive fourth root of unity.

If $p=3$, let $a \in \F_{q^2}$ be such that 
$$
\F_{q^2}=\F_3[a] \equad a^{q}+a^{q-1}+1=0.
$$
Then the group $H=\langle x,y\rangle$ coincides with $\SU_{8}(q^2)$.
\end{proposition}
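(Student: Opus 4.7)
My plan is to combine the six preparatory lemmas already established for $n=8$ with Aschbacher's theorem on the maximal subgroups of $\SU_8(q^2)$, as tabulated in \cite{Ho}, Tables 8.46--8.47, ruling out each class in turn.

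The first step is to confirm existence of an $a \in \F_{q^2}$ satisfying all the stated hypotheses. For $p \ne 3$, Lemma \ref{Nbeta} applied to $g_1(t) = t^{q+1}-1$ produces at least $\Nb_1$ roots $a$ with $\F_p[a^3]=\F_{q^2}$; since condition (2) excludes at most four values and the prescribed choice $a=\iota$ for $q=p\equiv 3 \pmod 4$ is compatible with (1) and (2), a valid $a$ exists for every $q$ in range except $q=4,9$, and these two exceptional cases are settled by the explicit minimal polynomials $t^4+t^3+t^2+t+1$ and $t^4-t^3+t^2+t-1$ given in the statement. In characteristic $3$, condition \eqref{3} is furnished by Lemma \ref{Nbeta} applied to $g_2(t)$. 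Once $a$ is fixed, the inclusion $H\le \SU_8(q^2)$ follows from $\det x=\det y=1$ together with the preservation of the hermitian form $J_8$.

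Next I would argue by contradiction: suppose $H\le M$ for some maximal subgroup $M$ of $\SU_8(q^2)$. Inspection of Tables 8.46--8.47 of \cite{Ho} shows that, since $n=8$ has no odd prime divisor and class $\mathcal{C}_8$ is empty for unitary groups, $M$ must lie in one of $\mathcal{C}_1, \mathcal{C}_2, \mathcal{C}_4, \mathcal{C}_5, \mathcal{C}_6, \mathcal{C}_7$ or $\mathcal{S}$. The preparatory lemmas dispose of all but one of these classes: Lemma \ref{irr8} rules out $\mathcal{C}_1$ (absolute irreducibility), Lemma \ref{prim8} rules out $\mathcal{C}_2$ (primitivity), Lemma \ref{tensor8} rules out $\mathcal{C}_4$ (tensor-indecomposability), Lemma \ref{no C58} rules out $\mathcal{C}_5$ (subfield), Lemma \ref{8C6} rules out $\mathcal{C}_6$ (normalizer of an extraspecial group), and Lemma \ref{8S} rules out the almost-simple class $\mathcal{S}$.

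The main obstacle is the elimination of $\mathcal{C}_7$. Its members have shape $(\SU_2(q^2))^3.S_3$ acting on $V=V_1\otimes V_2\otimes V_3$ with the three factors permuted by the $S_3$-quotient. I would let $K\le H$ be the kernel of the induced $H$-action on $\{V_1,V_2,V_3\}$, so that $[H:K]$ divides $6$ and $K$ preserves the tensor decomposition factorwise; in particular $K$ sits inside a subgroup of type $\GL_2(\F)\otimes \GL_4(\F)$ after regrouping. A variant of the determinant computation in the proof of Lemma \ref{tensor8}, applied to a specific element of $K$ whose characteristic polynomial is already known (for instance $[x,y]^6$, or a suitable power of $xy$ whose polynomial is recorded in \eqref{charxy} or \eqref{charxy3}), then forces an algebraic relation on $a$ incompatible with $\F_p[a^3]=\F_{q^2}$, giving the desired contradiction. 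Once $\mathcal{C}_7$ is excluded we conclude $H=\SU_8(q^2)$.
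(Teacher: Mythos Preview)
Your proof follows the paper's strategy exactly: invoke the six preparatory lemmas (\ref{irr8}, \ref{prim8}, \ref{tensor8}, \ref{no C58}, \ref{8C6}, \ref{8S}) to eliminate the Aschbacher classes in turn. The paper's own argument is shorter only because, after establishing absolute irreducibility, primitivity and tensor-indecomposability, it appeals directly to Tables~8.46--8.47 of \cite{Ho} and reads off that the only remaining possibilities for $M$ lie in $\Cl_5\cup\Cl_6\cup\mathcal{S}$. In particular no separate treatment of $\Cl_7$ is carried out, and none is needed according to those tables.

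Your extra argument for $\Cl_7$ is therefore superfluous and, as written, does not actually go through. The proof of Lemma~\ref{tensor8} works by writing $x=\tilde x_2\otimes\tilde x_4$ and $y=\tilde y_2\otimes\tilde y_4$ and then comparing specific coefficients of the characteristic polynomial of $xy$ with \eqref{charxy} or \eqref{charxy3}; the contradiction comes from structural constraints on $\tilde x_2,\tilde y_2,\tilde y_4$ (the Jordan form of $\tilde y_2$, the $4$-dimensional fixed space of $y$, the normalisation of $\tilde x_2$). In the tensor-induced situation $x$ and $y$ need not be pure tensors, so none of this machinery is available. Passing to an element of the kernel $K$, such as $[x,y]^6$ or $(xy)^6$, does produce a pure tensor, but you have neither its characteristic polynomial nor any a priori constraint on its tensor factors analogous to those used in Lemma~\ref{tensor8}; there is nothing to compare. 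Turning your sketch into a proof would require a genuinely new computation, not a ``variant'' of the existing one. The clean fix is simply to cite the BHRD tables, as the paper does.
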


\begin{proof}
The subgroup $H$ is absolutely irreducible by Lemma \ref{irr8}.
It follows from Lemmas \ref{prim8} and \ref{tensor8} that $H$ is primitive and tensor-indecomposa\-ble.
Suppose that $H$ is contained in a maximal
subgroup $M$ of 
$\SU_{8}(q^2)$. According to the classification of \cite[Tables 8.46 and 8.47]{Ho}, $M$ is then a subgroup belonging to
$\Cl_5\cup \Cl_6\cup \mathcal{S}$. 
Classes $\Cl_6$ and $\mathcal{S}$, which must be considered only when $q=p\equiv 3 \pmod 4$, are ruled out by 
Lemmas \ref{8C6} and \ref{8S}. Finally, class $\Cl_5$ is excluded by Proposition \ref{no C58}.
We conclude that $H=\SU_{8}(q^2)$.
\end{proof}

\section{The case $n=11$ and $q>2$}\label{11}

Our generators work also for $n=11$ but, for a shorter proof,
we  use different ones. More precisely we extend to $n=11$ the generators used in \cite{SU7} for $n=7$.
So, let  $\C=\{e_1,\ldots,e_{11}\}$ be the 
canonical basis of
$V=\F^{11}$.
Let $a \in \F_{q^2}$ be such that $\F_{q^2}=\F_p[a]$ and 
define $H=\langle x,y \rangle$, where $x$ and $y$  are matrices of respective order $2$ and $3$, acting 
on $\C$ as follows:
\begin{itemize}
\item $x e_{2j+1}= e_{2j+2}$ for any $j=0,\ldots,4$;
\item $xe_{11}=a(e_1+e_2)+e_5+e_6-(e_9+e_{10}+e_{11})$;
\item $ye_1=e_1$ and $ye_{11}=(a+a^q+1)e_1-(e_{10}+e_{11})$;
\item $y e_{2j}=e_{2j+1}$ for any $j=1,\ldots,5$;
\item $y e_{2j+1}=-(e_1+e_{2j}+e_{2j+1})$  and $y e_{2j+5}=e_1-(e_{2j+4}+e_{2j+5})$ for any $j=1,2$.
\end{itemize}
The similarity invariants of $x$ and $y$ are, respectively,
$$d_1(t)=t+1, \quad d_2(t)=d_3(t)=d_4(t)=d_5(t)=d_6(t)=t^2-1
$$
and
\begin{equation}\label{sim11y}
d_1(t)=d_2(t)=d_3(t)=d_4(t)=t^2 + t + 1, \quad d_5(t)=t^3-1.
\end{equation}
We then obtain that $x,y \in \SL_{11}(q^2)$.
The characteristic polynomial of $z=xy$ is
$$\chi_z(t)=t^{11} - t^9 + 2 t^7 - (a + 1) t^6 + (a^q + 1) t^5 - 2 t^4 + t^2 - 1.$$

\begin{lemma}\label{uni11}
If $H$ is absolutely irreducible, then $H\leq \SU_{11}(q^2)$.
\end{lemma}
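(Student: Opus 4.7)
The plan is to exhibit a non-degenerate Hermitian matrix $J\in\Mat_{11}(\F_{q^2})$ satisfying $x^T J x^\psi = J$ and $y^T J y^\psi = J$, using absolute irreducibility of $H$ to reduce this to finding a single non-zero solution. First, the similarity invariants of $x$ and those recorded in \eqref{sim11y} for $y$ yield $\det x = \det y = 1$, so at the outset $H \leq \SL_{11}(q^2)$.

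Let $\mathcal{J} = \{J \in \Mat_{11}(\F_{q^2}) \mid g^T J g^\psi = J \text{ for all } g \in H\}$; equivalently, for $g = x$ and $g = y$. Any $J \in \mathcal{J}$ is an $\F_{q^2}$-linear intertwiner from the natural $H$-module $V$ to its twisted contragredient $V'$, whose action is $g \cdot v = ((g^\psi)^T)^{-1} v$; since $V$ is absolutely irreducible so is $V'$, hence Schur's lemma gives $\dim_{\F_{q^2}} \mathcal{J} \leq 1$. Moreover any non-zero $J \in \mathcal{J}$ is automatically non-degenerate, because its left and right radicals would otherwise be proper $H$-invariant subspaces of $V$. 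Finally, transposing the invariance relation and applying $\psi$ entrywise shows that $(J^T)^\psi$ also lies in $\mathcal{J}$, so by one-dimensionality $(J^T)^\psi = \lambda J$ for some $\lambda \in \F_{q^2}^*$; iterating gives $\lambda \lambda^\psi = 1$, and Hilbert 90 for the cyclic extension $\F_{q^2}/\F_q$ produces $\mu \in \F_{q^2}^*$ with $\mu J$ Hermitian. Then $H$ preserves the non-degenerate Hermitian form defined by $\mu J$, placing $H$ in $\U_{11}(q^2)$, and combined with $H \leq \SL_{11}(q^2)$ this yields $H \leq \SU_{11}(q^2)$.

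Everything above is a formal consequence of absolute irreducibility, so the main obstacle is the production of one non-zero element of $\mathcal{J}$. In practice one writes down the $121$ scalar equations coming from $x^T J x^\psi = J$ and $y^T J y^\psi = J$ using the explicit action of $x$ and $y$ on the canonical basis, exploits the near-permutation shape of both generators to back-solve row by row, and obtains a closed-form $J$ whose entries are low-degree expressions in $a$ and $a^q$. The Schur bound guarantees that once any non-zero solution is found it is the right one up to scalar, and the construction follows the same pattern as the $n=7$ computation in \cite{SU7}, from which our generators were extended. The verification of the resulting matrix is then a routine direct calculation.
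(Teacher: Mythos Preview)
Your framework is sound---the Schur bound, the non-degeneracy of any non-zero $J$, and the Hilbert~90 step are all correct---but there is a genuine gap at exactly the point you yourself label ``the main obstacle'': you never exhibit a non-zero $J\in\mathcal{J}$, nor give any argument that one exists. Everything you have written up to that point holds verbatim for an arbitrary absolutely irreducible subgroup of $\SL_{11}(q^2)$, and for most such subgroups $\mathcal{J}=0$; nothing in your argument yet uses the specific shape of $x$ and $y$ to force the twisted contragredient of $V$ to be isomorphic to $V$. Saying that one ``back-solves row by row'' and that ``the verification \dots\ is then a routine direct calculation'' is a description of a method, not a proof: until the matrix $J$ is actually on the page (or an abstract reason the linear system is consistent is supplied), the lemma remains unproved.

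The paper sidesteps this computation by a genuinely different mechanism. From the listed similarity invariants it reads off $\dim C(x)=61$ and $\dim C(y)=51$ via the Frobenius formula, and then Scott's inequality applied to the triple $(x,y,(xy)^{-1})$ together with absolute irreducibility forces $\dim C(xy)=11$, i.e.\ $z=xy$ is cyclic. That single similarity-invariant condition on $z$ is precisely the hypothesis needed to invoke \cite[Theorem~3.1]{SU4}, which then yields $H\le\SU_{11}(q^2)$ directly, without ever writing down a Gram matrix. Your route, once completed, would have the advantage of producing the Hermitian form explicitly; the paper's route trades that for a two-line dimension count and a citation. If you wish to keep your approach, you must actually produce $J$.
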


\begin{proof}
Let $C(g)$ be the centralizer of $g$ in $\Mat_{11}(q^2)$.
Applying Frobenius formula \cite[Theorem 3.16]{Jac}, we obtain that $\dim C(x)=61$ and
$\dim C(y)=51$. By the absolute irreducibility of $H$ and applying Scott's formula (see \cite{SU4}), 
we obtain that $\dim C(z)=11 $ and, in particular, that $z$ has a unique similarity invariant.
From \cite[Theorem 3.1]{SU4} it follows that $H\leq \SU_{11}(q^2)$.
\end{proof}

We now consider the absolute irreducibility of $H$. Define
$$\begin{array}{rcl}
f_1(a) & = & a^{2q}- a^{q+1} - 3a^q +a^2   - 3 a  + 9, \\
f_2(a) & =&   a^{5q}+ 5 a^{4q+1} + 16 a^{4q} + 10 a^{3q+2} + 64 a^{3q+1}  + 92 a^{3q}
+ 10 a^{2q+3} +\\
&& 128 a^{2q+2} + 436 a^{2q+1}+ 424 a^{2q} + 5 a^{q+4} + 64a^{q+3}+ 436 a^{q+2} +\\
&& 1168 a^{q+1}+ 1008 a^q  + a^5 + 16 a^4   + 92 a^3    + 424 a^2    + 1008a      +        864.
\end{array}$$
        
 \begin{lemma}\label{irr11}
Let $a\in \F_{q^2}$ be such that $f_1(a)f_2(a)\neq 0$. Then $H$ is absolutely irreducible.
 \end{lemma}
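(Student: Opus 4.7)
The plan is to follow the template set by Lemma~\ref{irr8}: let $U$ be an $H$-invariant subspace of $V = \F^{11}$ and let $\overline U$ be an $H^T$-invariant complement, as supplied by Lemma~\ref{rem d}. I aim to show, using the two listed genericity conditions, that either $U$ contains $11$ linearly independent vectors (and so $U = V$) or $\overline U$ does (and so $U = \{0\}$).

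First I would examine the characteristic polynomial $\chi_z(t)$ of $z = xy$, using its explicit form
\[
\chi_z(t)=t^{11} - t^9 + 2 t^7 - (a + 1) t^6 + (a^q + 1) t^5 - 2 t^4 + t^2 - 1,
\]
and identify a root $\sigma$ of multiplicity one. The polynomial $f_1(a)$ is designed to play the role of a resultant-type obstruction: the assumption $f_1(a)\neq 0$ is precisely what one needs to separate $\sigma$ from the other roots of $\chi_z(t)$, so that Lemma~\ref{rem d} applies to $z$ with eigenvalue $\sigma$. Once $\sigma$ is isolated I would write down the right-eigenvector $s_\sigma$ of $z$ and the left-eigenvector $\overline s_\sigma$ of $z^T$, with entries expressed rationally in $a$, $a^q$, $\sigma$.

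By Lemma~\ref{rem d}, one of the following holds:
\begin{itemize}
\item[(i)] $s_\sigma \in U$;
\item[(ii)] $\overline s_\sigma \in \overline U$.
\end{itemize}
In case (i), I would select $11$ words $w_1, \ldots, w_{11} \in \langle x, y\rangle$ and form the matrix $M$ whose columns are $w_1 s_\sigma, \ldots, w_{11} s_\sigma$. All columns lie in $U$ by $H$-invariance. The content of the lemma then is that $\det M$ is, up to a nonzero scalar factor, equal to $f_2(a)$, so $f_2(a)\neq 0$ forces $\dim U = 11$. In case (ii), I would perform the mirror construction with $w_i^T \overline s_\sigma$ inside $\overline U$; the corresponding determinant will again agree with $f_2(a)$ up to a nonzero factor, giving $\overline U = V$ and hence $U = \{0\}$.

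The main obstacle is the combinatorial-symbolic part of step (i), namely picking a collection $w_1, \ldots, w_{11}$ whose resulting Gram-type determinant neither vanishes identically nor factors into an unmanageable expression, and then verifying that this determinant coincides, up to a scalar in $\F_{q^2}^{\ast}$, with $f_2(a)$. In practice this verification is mechanical (the $w_i$ are short words in $x, y$ and the computation is delegated to a computer algebra system such as MAGMA), but the human input is to make a choice of words whose orbit determinant factors through precisely $f_1(a) f_2(a)$, thereby explaining the form of the hypothesis. The transposed computation in case (ii) is analogous, exploiting the fact that the similarity invariants \eqref{sim11y} of $y$ ensure that $y^T$ has the same action (up to conjugacy) as $y$, so the determinant one obtains is the same polynomial in $a$.
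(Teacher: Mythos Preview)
Your proposal follows a template that worked earlier in the paper, but here it is both speculative and substantively different from the argument the authors actually give, and it contains a real gap.

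The paper does \emph{not} use Lemma~\ref{rem d} or any eigenvector of $z=xy$ for $n=11$. Instead it exploits two structural features of the generators directly. First, for every $u\in V$ one has $u+yu+y^2u\in\langle e_1\rangle$, and the $H$-orbit of any nonzero multiple of $e_1$ spans $V$ (via $xe_{2i+1}=e_{2i+2}$, $ye_{2i+2}=e_{2i+3}$). Hence any proper $H$-invariant $U$ is annihilated by $1+y+y^2$. Second, one splits according to whether $U$ is also annihilated by $1+x$. If so, the combined constraints $u+xu=0$, $(yu)+x(yu)=0$, $(y^2u)+x(y^2u)=0$ reduce to a $2\times 2$ linear system whose determinant is $f_1(a)$. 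If not, one takes $w=v+xv\ne 0$ (which has a very restricted shape) and successively applies the relation $u+yu+y^2u=0$ to $(xy)^i w$ for $i=0,\dots,4$; this yields a $3\times 3$ system with determinant $f_2(a)$ (when $p\ne 2$; for $p=2$ the computation collapses even more simply, and $f_2(a)=(a+a^q)^5$). Thus $f_1$ and $f_2$ are not resultant or discriminant conditions on $\chi_z$, nor $11\times 11$ orbit determinants: they are small determinants coming from the $(\pm1)$-eigenspace structure of $x$ and the relation $y^2+y+1\equiv 0$ on $\langle e_1\rangle^\perp$.

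The gap in your plan is precisely the unjustified identification of $f_1(a)$ and $f_2(a)$ with objects from your eigenvector scheme. You assert that $f_1(a)\ne 0$ is ``precisely'' the condition isolating a simple root $\sigma$ of $\chi_z$, and that the $11\times 11$ orbit determinant equals $f_2(a)$ up to a unit; but you give no evidence for either claim, and the actual derivation of $f_1,f_2$ above shows they arise from $2\times 2$ and $3\times 3$ determinants tied to $x$ and $y$ separately, not to $xy$. Your template might in principle lead to \emph{some} sufficient nonvanishing hypothesis, but there is no reason to expect it to reproduce the specific polynomials $f_1,f_2$ of the statement, and without that your argument does not prove the lemma as stated.
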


  \begin{proof}
 Assume that $f_1(a)f_2(a)\neq 0$ and let $U\neq V$ be an $H$-invariant subspace of $V$.
 A direct calculation shows that, for all $u \in V$, the element $u+yu+y^2u$ always belongs to the subspace $\langle 
e_1\rangle$. On the other hand, we have
$x e_{2i+1}=e_{2i+2}$ and $ye_{2i+2}=e_{2i+3}$ for all $i=0,\ldots,4$.
It follows that, if $v=u+yu+y^2u\neq 0$ for some $u \in U$, then the $H$-submodule generated by $v$ is the whole space 
$V$, in contradiction with the assumption $U\neq V$.
Hence, every element $u \in U$ satisfies the following condition:
\begin{equation}\label{3.6}
u + yu + y^2 u=0.
\end{equation}
We will show that this condition implies $U=\{0\}$.\\
\textbf{Case 1.} Suppose that $u+xu=0$ for all $u \in U$. Then all vectors in $U$ have shape
$$(\lambda_1,a(\lambda_5+\lambda_6)-\lambda_1, \lambda_3,-\lambda_3,\lambda_5,\lambda_6,\lambda_7,-\lambda_7,
\lambda_9, -(\lambda_5+\lambda_6+\lambda_9), -(\lambda_5+\lambda_6)).$$
Fix a nonzero $u \in U$. From $yu+x(yu)=0$ and $y^2u+x(y^2u)=0$ we get $\lambda_5=-(\lambda_1+(a+1)\lambda_3)$,
$\lambda_6=\lambda_1+a \lambda_3$, $\lambda_7=-\lambda_3$, $\lambda_9=\lambda_1+(a+1)\lambda_3$ and
$(\lambda_1,\lambda_3)A=(0,0)$, where
$$A=\begin{pmatrix}
  a-3 & a^q-3\\
  a^2-2a-a^q & a^{q+1}-2a-3
  \end{pmatrix}.$$
Since $\det(A)=f_1(a)\neq 0$, we obtain $\lambda_1=\lambda_3=0$ and hence $u=0$. This means that $U=\{0\}$, as 
desired.\\
\textbf{Case 2.} Suppose that there exists $v \in U$ such that $v+xv\neq 0$. Then, the vector $w=v+xv$ has shape
$(\lambda_1,\lambda_1,\lambda_3,\lambda_3, \lambda_5,\lambda_5, \lambda_7,\lambda_7, \lambda_9,\lambda_9,0 )$.
Equation \eqref{3.6} applied to $w$ gives the condition $2\lambda_3=2 (\lambda_1+\lambda_7)+(a+a^q+2)\lambda_9$.
Assume first that $p=2$. Since $f_2(a)=(a+a^q)^5$ is nonzero by hypothesis, it follows that $\lambda_9=0$.
Application of \eqref{3.6} to the vectors $(xy)^iw \in U$ for $i=1,2,3,4$ gives $\lambda_{9-2i}=0$.
So, $u=0$ and hence $U=\{0\}$. Now, assume $p\neq 2$ and set $\lambda_3=\lambda_1+\lambda_7+ \frac{a+a^q+2}{2} 
\lambda_9$. 
Application of \eqref{3.6} to the vector $ xy w \in U$ gives $\lambda_1=\lambda_5 +\frac{a+a^q}{2}\lambda_7-
\frac{a+3a^q+6}{2}\lambda_9$. Finally, application of the same equation to the vectors $ (xy)^i w \in U$ for $i=2,3,4$ 
gives
$(\lambda_5, \lambda_7,\lambda_9)B=(0,0,0)$,
where $B$ is the following matrix
\begin{small}
$$\begin{pmatrix}
 a+a^q+4 &  4(a^q+2) &  4 (a^q+3) \\ 
 -2(a^q+2) &  -(a+a^q+2)^2-8(a^q+2) &  -2(a^{2q}+ a^{q+1}+7a^q +  a+12) \\
 2(a^q+3) &  2 (a^{2q}+7 a^q+a^{q+1}+a+12) &  -(a+a^q)^2 +4(a^{2q}-2a+2a^q+3) 
  \end{pmatrix}.$$

  \end{small}
\noindent Since $\det(B)=f_2(a)\neq 0$, we obtain $\lambda_5=\lambda_7=\lambda_9=0$ and so $u=0$.
It follows that also in this case $U=\{0\}$, proving that $H$ is absolutely irreducible.
  \end{proof}

\begin{lemma}\label{mon11}
If $H$ is absolutely irreducible, then $H$ is not monomial.
\end{lemma}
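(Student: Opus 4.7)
The plan is to suppose for contradiction that $H$ is monomial: $V = V_1 \oplus \cdots \oplus V_{11}$ with the eleven lines permuted by $H$ via a homomorphism $H \to \bar H \leq S_{11}$ whose kernel $K$ lies in the diagonal torus and is therefore abelian. By the absolute irreducibility established in Lemma \ref{irr11}, the group $\bar H$ acts transitively on the eleven lines.

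First I would pin down $\bar y$. The similarity invariants \eqref{sim11y} show that $y$ is diagonalizable with eigenvalues $1, \omega, \omega^2$ of multiplicities $1, 5, 5$. Since $y$ has order $3$, the permutation $\bar y$ consists only of cycles of length $1$ or $3$; each $3$-cycle contributes each of the three eigenvalues exactly once, while each fixed line contributes a single cube root of unity. A short count of multiplicities shows that either $\bar y$ fixes every line (forcing $\bar H = \langle \bar x \rangle$ to have order at most $2$, incompatible with transitivity on $11$ points) or $\bar y$ is a single $3$-cycle with eight fixed lines. Hence $\bar y$ is a $3$-cycle.

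Since $11$ is prime, any transitive subgroup of $S_{11}$ is primitive, and by Jordan's theorem a primitive subgroup of $S_n$ containing a $3$-cycle contains $A_n$. Therefore $\bar H \supseteq A_{11}$.

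The main obstacle is to extract the contradiction from $\bar H \supseteq A_{11}$, and I would do this via Clifford theory applied to the normal abelian subgroup $K$. The restriction $V|_K$ decomposes into the eleven one-dimensional weight spaces $V_i$ with characters $\chi_i$, permuted by $\bar H$ exactly as the lines are. As $A_{11} \subseteq \bar H$ acts $2$-transitively on the lines, the $\chi_i$ are either all equal or pairwise distinct. In the first case $K$ acts by scalars and $V$ becomes a faithful irreducible $11$-dimensional projective representation of $\bar H \supseteq A_{11}$, which does not exist, since the faithful irreducible representations of $A_{11}$, $S_{11}$ and of the double covers $2.A_{11}$, $2.S_{11}$ in any characteristic have dimensions in $\{1, 10, 16, 44, \ldots\}$, with no $11$ occurring. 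In the second case $V = \mathrm{Ind}_{H_1}^H \theta$ for the stabilizer $H_1 = \mathrm{Stab}_H(V_1)$ of index $11$ and a linear character $\theta$ of $H_1$ with $\theta|_K = \chi_1$; since $H_1/K \supseteq A_{10}$ is perfect, $\theta$ is determined by $\chi_1$. Combined with the unitary structure (Lemma \ref{uni11}) — the Hermitian form pairs each $\chi_i$ with $\chi_i^{-q}$ through an involution of $\{1,\dots,11\}$ which must centralize $\bar H \supseteq A_{11}$ and is therefore the identity, so each $\chi_i$ satisfies $\chi_i^{q+1}=1$ — and comparing the characteristic polynomial $\chi_z(t)$ of $z=xy$ displayed just before the lemma with the forced factorization $\prod_i (t^{\ell_i}-\mu_i)$ coming from the cycle structure of $\bar z$, I expect to reach algebraic relations between $a$, $a^q$ and the weights $\mu_i$ that place $a$ in $\F_p$, contradicting $\F_p[a]=\F_{q^2}$.
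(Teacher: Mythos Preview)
Your opening move is sound when $p\neq 3$: the eigenvalue multiplicities $1,5,5$ of $y$ force $\bar y$ to have at most one $3$-cycle, since every $3$-cycle already contributes one copy of the eigenvalue $1$. But the claim that $y$ is diagonalizable fails when $p=3$; there $t^2+t+1=(t-1)^2$ and $t^3-1=(t-1)^3$, so $y$ has Jordan type $(J_2)^4\oplus J_3$ and your multiplicity count does not apply. That case is not covered by your argument.

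More seriously, everything after ``$\bar y$ is a $3$-cycle'' is unnecessary and, in Case~2 of your Clifford analysis, unfinished (``I expect to reach algebraic relations\ldots'' is not a proof). The contradiction is already in hand, by the same elementary orbit count that underlies the paper's argument. An involution $\bar x\in S_{11}$ has at most five $2$-cycles, and a $3$-cycle $\bar y$ contributes two further ``edges'' in the obvious Schreier graph on $\{1,\dots,11\}$; with at most $5+2=7$ edges on $11$ vertices, $\langle\bar x,\bar y\rangle$ has at least $11-7=4$ orbits, contradicting transitivity. There is no need for Jordan's theorem, $A_{11}$, or Clifford theory.

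The paper runs the same orbit count in the opposite direction, and in a way that is uniform in the characteristic: transitivity of $\langle\bar x,\bar y\rangle$ on $11$ points, with $\bar x$ an involution and $\bar y$ of order $3$, forces $\bar y$ to have \emph{three} $3$-cycles (since $11-5-2k\le 1$ requires $k\ge 3$). Each such $3$-cycle gives a cyclic block of $y$ with invariant factor $t^3-1$, so $t^3-1$ occurs among the similarity invariants of $y$ at least three times, contradicting~\eqref{sim11y}, where it occurs only once. This two-line argument replaces your entire Jordan/Clifford programme and needs no separate treatment of $p=3$.
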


\begin{proof}
Let $\{v_1,v_2,\ldots,v_{11}\}$ be a basis on which $H$ acts as a monomial, transitive group.
We may then assume  $yv_1=v_2$, $yv_2=v_3$, $yv_3=\lambda_1 v_1$, $yv_4=v_5$, $yv_5=v_6$, $yv_6=\lambda_4 v_4$,
$yv_7=v_8$, $yv_8=v_{9}$ and $yv_9=\lambda_7 v_7$.
As $y^3=\I_{11}$, we must have $\lambda_1=\lambda_4=\lambda_7=1$.
This implies that $y$ has the similarity invariant $t^3-1$ with multiplicity $3$, a contradiction with \eqref{sim11y}.
\end{proof}

\begin{lemma}\label{sub11}
Assume that $H$ is absolutely irreducible. 
Then, $H$ is not contained in any maximal subgroup of
$\SU_{11}(q^2)$ of class $\Cl_5$.
\end{lemma}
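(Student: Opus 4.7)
The plan is to mirror the arguments of Proposition \ref{no C5} and Lemma \ref{no C58}, adapted to the present generators. Assume, for contradiction, that $H$ is contained in a maximal subgroup of $\SU_{11}(q^2)$ of class $\Cl_5$. Then there exist $g\in \U_{11}(q^2)$ and a proper subfield $\F_{q_0}$ of $\F_{q^2}$ such that
$$\langle x,y\rangle^g \leq \GL_{11}(q_0)\left(\F_{q^2}^*\I_{11}\right)\leq \SU_{11}(q^2).$$
Accordingly, I would write $x^g=\vartheta_1 x_0$ and $y^g=\vartheta_2 y_0$, with $x_0,y_0\in \GL_{11}(q_0)$ and $\vartheta_1,\vartheta_2\in\F_{q^2}^*$.

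The first step is to pin down the scalars. The matrix $x_0=\vartheta_1^{-1}x^g$ has the similarity invariant $t+\vartheta_1^{-1}$, coming from the invariant $d_1(t)=t+1$ of $x$; since $x_0\in\GL_{11}(q_0)$, this forces $\vartheta_1\in\F_{q_0}$. Similarly, the factor $t^2+t+1$ of $y$ (appearing four times according to \eqref{sim11y}) gives $y_0$ the similarity invariant $t^2+\vartheta_2^{-1}t+\vartheta_2^{-2}$, which must have coefficients in $\F_{q_0}$, so $\vartheta_2\in\F_{q_0}$.

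With both scalars in $\F_{q_0}$, the elements $x^g=\vartheta_1 x_0$ and $y^g=\vartheta_2 y_0$ both lie in $\GL_{11}(q_0)$, hence so does their product $(xy)^g=z^g$. Since conjugation preserves characteristic polynomials, $\chi_z(t)$ must have all of its coefficients in $\F_{q_0}$. Reading the explicit expression
$$\chi_z(t)=t^{11}-t^9+2t^7-(a+1)t^6+(a^q+1)t^5-2t^4+t^2-1,$$
the coefficient of $t^6$ gives $a+1\in\F_{q_0}$, hence $a\in\F_{q_0}$. This contradicts the standing hypothesis $\F_p[a]=\F_{q^2}$, completing the argument.

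I do not anticipate a serious obstacle here; the argument is routine once the correct data are used. The only mildly delicate point is justifying $\vartheta_1,\vartheta_2\in\F_{q_0}$, which relies on the fact that $x$ has a linear similarity invariant and $y$ has an irreducible quadratic similarity invariant over $\F_p$ that forces the quadratic $t^2+\vartheta_2^{-1}t+\vartheta_2^{-2}$ to have coefficients in $\F_{q_0}$. After that, the coefficient of $t^6$ in $\chi_z(t)$ does the rest.
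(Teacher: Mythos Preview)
Your argument is correct and follows the same template as the paper's proof: reduce to $\vartheta_1,\vartheta_2\in\F_{q_0}$ via the similarity invariants of $x$ and $y$, then force $a\in\F_{q_0}$. Your final step, reading $a+1$ off the $t^6$-coefficient of $\chi_z(t)$, is in fact a small streamlining of the paper's version, which instead computes $\tr((xy)^9)=-9a$ and must then treat $p=3$ separately via $\tr((xy)^5)=-(a+1)$.
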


\begin{proof}
Suppose, by contradiction, that there exists $g \in \U_{11}(q^2)$ such that 
$$\langle x,y\rangle^g\leq  \GL_{11}(q_0)\left( \F_{q^2}^* \I_{11}\right) \leq \SU_{11}(q^2),$$
where $\F_{q_0}$ is a proper subfield of $\F_{q^2}$. Set
$x^g = \vartheta_1 x_0$, $y^g=\vartheta_2 y_0$, where $x_0,y_0 \in \GL_{11}(q_0)$ and $\vartheta_i \in \F_{q^2}^*$.
Recall that  $x$ has the similarity invariant $t+1$ and $y$ has the similarity invariant $t^2+t+1$. It follows that $x_0$ and $y_0$ must have, respectively, the similarity invariants $t+\vartheta_1^{-1}$ and $t^2+\vartheta_2^{-1} t
+\vartheta_2^{-1}$: we conclude that $\vartheta_1,\vartheta_2 \in \F_{q_0}$. 
Now, the relation $\tr((xy)^9)=-9a$  gives $a\in \F_{q_0}$, unless $p=3$.
In this case, we use $\tr((xy)^5)=-(a+1)$, which gives again $a \in \F_{q_0}$.
Thus, $\F_{q^2}=\F_p[a]\leq \F_{q_0}$ returns $q_0=q^2$ in contrast with the assumption that
$\F_{q_0}$ is a proper subfield of $\F_{q^2}$.
\end{proof}

\begin{lemma}\label{S11}
Assume that $H$ is absolutely irreducible. 
Then, $H$ is not contained in any maximal subgroup $M$ of
$\SU_{11}(q^2)$ of class $\mathcal{S}$.
\end{lemma}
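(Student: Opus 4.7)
The plan is to follow the strategy of Lemma \ref{8S}. From \cite[Table 8.71]{Ho}, which classifies the maximal subgroups of $\SU_{11}(q^2)$ in class $\mathcal{S}$, one sees that such a subgroup $M$ arises only under narrow congruence conditions on $q$, and that modulo a cyclic centre of bounded order $M$ is almost simple. In particular, the derived subgroup $M'$ surjects onto a fixed simple group $S$ whose set of element orders is short and known.

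The argument would proceed by enumerating the candidates listed in the table. For each candidate $M$, note that the commutator $[x,y]$ belongs to $M'$, so its image in $S$ has an order $s$ taken from the finite list of element orders of $S$. Combined with the bound on the exponent of the centre of $M$, this forces a small power $[x,y]^N$ to be a scalar matrix in $\SU_{11}(q^2)$, for some explicit $N$ depending on $s$ and on $|Z(M)|$.

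For each such $N$, the matrix $[x,y]^N$ can be computed from the explicit formulas for $x$ and $y$ given at the start of Section \ref{11}, and an off-diagonal entry must then vanish while an on-diagonal entry must be a prescribed root of unity. The resulting polynomial equation in $a$ must be shown to be incompatible with the hypothesis $\F_p[a]=\F_{q^2}$, exactly as in the concluding step of Lemma \ref{8S}, where consideration of the entry in position $(1,3)$ produced the contradiction. In favourable cases one can shortcut this by invoking the trace identities already used in Lemma \ref{sub11}, namely $\tr((xy)^9)=-9a$ in characteristic not $3$ and $\tr((xy)^5)=-(a+1)$ in characteristic $3$: if $M$ has order bounded in terms of $q$ then $\tr((xy)^k)$ lies in a proper subfield, forcing $a$ into that subfield.

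The main technical obstacle will be the bookkeeping over the finitely many admissible orders $s$ and associated exponents $N$: the matrix powers $[x,y]^N$ grow rapidly in complexity and the polynomial conditions on $a$ must be extracted symbolically and solved case by case. For the genuinely small values of $q$ at which $\mathcal{S}$-subgroups actually appear in \cite[Table 8.71]{Ho}, a direct computer verification, constructing $\SU_{11}(q^2)$ and each candidate $M$ explicitly and checking that $\langle x,y\rangle$ is not contained in any conjugate of $M$, provides a clean alternative to the symbolic computation.
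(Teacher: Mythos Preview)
Your proposal is only a sketch, and it takes a harder route than the paper does. The paper's proof is a two-line character argument that avoids entirely the bookkeeping over powers of $[x,y]$ that you anticipate as the main obstacle.

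Concretely: the relevant table is \cite[Table 8.73]{Ho} (not 8.71), and it says that a class-$\mathcal{S}$ maximal subgroup of $\SU_{11}(q^2)$ exists only for $q=p\ge 5$, in which case $M=Z\times K$ with $|Z|=\gcd(q+1,11)\in\{1,11\}$ and $K\in\{\PSL_2(23),\ \PSU_5(4)\}$. Since $|Z|$ is coprime to $6$, the generators $x$ and $y$ themselves lie in $K$. One then reads off the degree-$11$ irreducible character values of $K$: for $\PSL_2(23)$ every element of order $3$ has character value $-1$, contradicting $\tr(y)=-4$ (as $p\ne 3$); for $\PSU_5(4)$ every involution has character value in $\{-5,3\}$, contradicting $\tr(x)=-1$ (as $p\ne 2$). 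That is the whole proof.

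The key point you are missing is that, because the centre here has order dividing $11$, there is no need to pass to the commutator $[x,y]$ at all: $x$ and $y$ already sit inside the simple factor, and their traces are the character values. This is much cleaner than mimicking Lemma~\ref{8S}, where the centre was a $2$-group and one was forced to work with $[x,y]$ and compute high matrix powers. Your strategy could in principle be pushed through, but as written it is not a proof: you neither name the two candidate groups nor carry out any of the case analysis, and the ``favourable cases'' shortcut via the trace identities of Lemma~\ref{sub11} does not obviously apply, since those identities constrain the subfield generated by $a$, not the element orders in $K$.
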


\begin{proof}
Suppose that $H\leq M$. By \cite[Tables 8.73]{Ho} we have $q=p\geq 5$
and $M = Z\times K$,  where $Z$ has order $\gcd(q+1,11)$ and 
$K\in \{\PSL_2(23), \PSU_5(4)\}$.
It easily follows that $x,y\in K$. Note that $\tr(x) = -1$ and $\tr(y) = -4$. 
Considering the irreducible characters $\chi$ of $\PSL_2(23)$  and $\psi$ of $\PSU_5(4)$ of degree $11$, we see that if $g$ has order 3 then $\chi(g) = -1$, in contrast with $\tr(y) = -4$ (since $p\neq 3$); if $g$ has order $2$ then $\psi(g) 
\in \{-5,3\}$, in contrast with $\tr(x) = -1$ (since $p\neq 2$).
\end{proof}

\begin{proposition}\label{pr11}
Take $x,y$ as before and let $H=\langle x,y\rangle$.
If $a \in \F_{q^2}$  is such that 
\begin{equation}\label{11g}
\F_{q^2}=\F_p[a] \equad  f_1(a)f_2(a)\neq 0,
\end{equation}
 then $H=\SU_{11}(q^2)$.
Moreover, if $q> 2$, then there exists $a \in \F_{q^2}$ satisfying  \eqref{11g}.
\end{proposition}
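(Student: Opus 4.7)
The plan is to chain the lemmas already proved in Section~\ref{11} with the Aschbacher classification of maximal subgroups of $\SU_{11}(q^2)$ from \cite[Table 8.73]{Ho}, and then to settle the existence statement by a counting argument. First I would apply Lemma~\ref{irr11}: the hypothesis $f_1(a) f_2(a) \neq 0$ yields absolute irreducibility of $H$, and Lemma~\ref{uni11} then places $H$ inside $\SU_{11}(q^2)$. To conclude $H = \SU_{11}(q^2)$, I would assume for contradiction $H \leq M$ for some maximal subgroup $M$ and eliminate each Aschbacher class in turn. Since $n=11$ is prime, classes $\Cl_4$ and $\Cl_7$ cannot arise; class $\Cl_2$ reduces to the monomial case, handled by Lemma~\ref{mon11}; class $\Cl_1$ is killed by irreducibility; Lemma~\ref{sub11} disposes of $\Cl_5$; and Lemma~\ref{S11} disposes of class $\mathcal{S}$.

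What remains is to exclude the Singer normaliser class $\Cl_3$, of shape $(q^{22}+1).11$, and, when $11 \mid q+1$, the extraspecial class $\Cl_6$, of shape $11^{1+2}.\Sp_2(11)$. For both I would exploit the characteristic polynomial of the involution $x$, namely $(t-1)(t+1)^{10}$: a non-central element of the cyclic Singer part has $11$ pairwise distinct eigenvalues (a single Galois orbit over $\F_{q^2}$), a central element of the Singer is scalar, and an element outside the cyclic part has order divisible by $11$, so none of these spectral profiles matches the multiplicity pattern $(1,10)$. For $\Cl_6$, non-central elements of the extraspecial $11$-group act with the eleven $11$-th roots of unity as distinct eigenvalues (up to a common scalar), and the lifts to $\SU_{11}(q^2)$ of the involution of $\Sp_2(11)\cong \SL_2(11)$ likewise fail to reproduce $(1,10)$ after a direct spectral check. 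This spectral matching is the main obstacle, since no single earlier lemma covers these two classes; the bookkeeping to eliminate the outer cosets in both cases is the most delicate part.

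For the existence of $a$ satisfying \eqref{11g}, I would count. Each condition $f_i(a)\neq 0$ is a polynomial relation between $a$ and $a^q$; writing $Y=a^q$, each $f_i(X,Y)$ is a bivariate polynomial over $\F_p$ of bounded total degree, so its zero locus intersected with the Frobenius graph $\{(X,X^q):X\in \F_{q^2}\}$ has size $O(q)$ by a Bezout-type estimate. Meanwhile, the elements of $\F_{q^2}$ failing $\F_p[a]=\F_{q^2}$ lie in proper subfields and total at most $\sum_{d\mid 2f,\,d<2f}p^d\leq 2q$. Hence, for $q$ large enough, the complement in $\F_{q^2}$ has positive size and a suitable $a$ exists; the finitely many small $q>2$ (for example $q\in\{3,4,5,7,8,9,11\}$) would be handled by exhibiting an explicit admissible $a$ and verifying $f_1(a)f_2(a)\neq 0$ directly, in the spirit of Table~\ref{tab2}.
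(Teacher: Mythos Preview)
Your overall architecture matches the paper's: use Lemma~\ref{irr11} and Lemma~\ref{uni11} to get $H\le\SU_{11}(q^2)$, then eliminate maximal overgroups class by class, then count to establish existence of $a$. But there is a concrete error and a methodological difference worth noting.

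The error: from the similarity invariants listed at the start of Section~\ref{11}, namely $d_1(t)=t+1$ and $d_2(t)=\cdots=d_6(t)=t^2-1$, the characteristic polynomial of $x$ is $(t-1)^5(t+1)^6$, not $(t-1)(t+1)^{10}$. So $x$ has eigenvalue multiplicities $(5,6)$, not $(1,10)$. Your $\Cl_3$ argument actually survives this correction, since all you really use is that $x$ is a non-scalar involution (hence at most two distinct eigenvalues and order coprime to $11$), and your description of the Singer normaliser is essentially right. But your $\Cl_6$ argument rests on a ``direct spectral check'' against the pattern $(1,10)$; with the correct pattern $(5,6)$ that check has to be redone from scratch, and the bookkeeping on involutions in $11^{1+2}.\Sp_2(11)$ is not as immediate as you suggest.

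The paper handles these two classes differently and more economically. Citing \cite[Tables 8.72 and 8.73]{Ho} directly, it records that the only geometric classes to eliminate (beyond $\Cl_1$) are $\Cl_2$, $\Cl_5$, $\Cl_6$, together with $\mathcal S$; no separate $\Cl_3$ analysis is written out. For $\Cl_6$ the paper avoids any spectral case analysis on $x$: it simply observes that $(xy)^6$ is non-scalar (the $(5,1)$ entry equals $1$) and invokes \cite[Lemma 2.3]{SU35}, which disposes of the extraspecial normaliser in one line.

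For existence, the paper's counting is sharper than an abstract B\'ezout estimate: there are $\varphi(q^2-1)$ generators $a$ of $\F_{q^2}^\ast$, while $f_1$ and $f_2$, viewed as polynomials in $a$ over $\F_q$ of degrees $2$ and $5$, together have at most $7q$ roots in $\F_{q^2}$. Thus $\varphi(q^2-1)>7q$ suffices; this holds for $q\in\{16,25,27\}$ and for all $q\ge 31$ (using $\varphi(q^2-1)>(q^2-1)^{2/3}$ for large $q$). The remaining small $q$ run up through $29$, not just $11$ as you suggest, and are dispatched by exhibiting an explicit $a$ in Table~\ref{tab11}. Your asymptotic argument is fine in spirit, but your cutoff for ``small $q$'' is too optimistic for the bounds you would actually obtain.
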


\begin{proof}
Since $f_1(a)f_2(a)\neq 0$, the group $H$ is absolutely irreducible, as shown in Lemma \ref{irr11} and hence, by Lemma
\ref{uni11},  $H$ is subgroup of $\SU_{11}(q^2)$. Suppose that $H$ is contained in a maximal subgroup $M$ of 
$\SU_{11}(q^2)$.
According to the classification of \cite[Tables 8.72 and 8.73]{Ho}, $M$ is a subgroup belonging to $\Cl_2 \cup
\Cl_5\cup 
\Cl_6\cup \mathcal{S}$. 
Classes $\Cl_2$, $\Cl_5$ and $\mathcal{S}$ are excluded using Lemmas  \ref{mon11}, \ref{sub11} and 
\ref{S11}.
Since $(xy)^6$ is not a scalar matrix (the element of position $(5,1)$ is $1$), we can apply \cite[Lemma 2.3]{SU35} to prove that $H$ is 
not contained in a maximal subgroup of class $\Cl_6$.
We conclude that $H=\SU_{11}(q^2)$.

Now, suppose $q>2$.  Clearly, there are  $\varphi(q^2-1)$ different generators $a$ of $\F_{q^2}^*$; on the other hand,
there are at most $7q$ values $a$ such that $f_1(a)f_2(a)=0$.
For $q\in \{16,25,27 \}$ and for $q\geq 31$ we get $\varphi(q^2-1)>  7q$ and hence we can choose $a\in \F_{q^2}^*$
such that
\eqref{11g} is satisfied.
For each of the remaining values of $q$, 
in Table \ref{tab11} we exhibit a value $a$ which satisfies \eqref{11g}.
The inequality $\varphi(q^2-1)>  7q$ can be proved by direct computations for $q\in \{16,25,27 \}$ and for $31\leq q \leq 337$; for 
$q>337$ we use the fact that $\varphi(q^2-1) > (q^2-1)^{2/3}$, see \cite[Lemma 2.1]{SU35}.
\end{proof}

\begin{table}[ht]
$$\begin{array}{|cc|cc|cc|}\hline
q & m(t) & q & m(t) & q & m(t)\\\hline
3,5 & t^2 - t +11  & 4,9 &  t^4+t^3+t^2+t+1 & 7,13,17,19,29 & t^2+t+3 \\
8 &  t^6+t^3+1 & 11 & t^2+5t+3 &  23 & t^2+3t+3 \\\hline
  \end{array}$$
\caption{Minimum polynomial $m(t)$ of $a$ over $\F_p$.}\label{tab11}
\end{table}

\section{Conclusions}

\begin{proof}[Proof of Theorem \ref{main}]
For $3\leq n\leq 7$, the result was already proved in \cite{SU7,SU6,SU35,SU4}.
So, suppose $n\geq 9$ and $n\neq 11$.
If  $q= 2,3,5,7,8$ the statement follows from Proposition \ref{23578} and for the other values of $q$ we apply Theorem \ref{p<>3}.
The statement for $n=8$ follows from Propositions \ref{23578} and  \ref{p8no3}.
Finally, the statement for  $n=11$ follows from Propositions \ref{23578} and \ref{pr11}.
\end{proof}

\end{document}